\newcommand{\R}{{\Bbb R}}
\newcommand{\C}{{\Bbb C}}
\newcommand{\D}{{\Bbb D}}
\newcommand{\tr}{\text{\upshape tr\,}}
\newcommand{\res}{\text{\upshape Res\,}}
\newcommand{\diag}{\text{\upshape diag\,}}
\newcommand{\re}{\text{\upshape Re\,}}
\newcommand{\im}{\text{\upshape Im\,}}
\DeclareMathOperator{\dist}{dist}
\DeclareMathOperator{\sech}{sech}
\DeclareMathOperator{\rank}{rank}
\tikzset{middlearrow/.style={
			decoration={markings,
				mark= at position 0.6 with {\arrow{#1}} ,
			},
			postaction={decorate}
		}
	}
\tikzset{->-/.style={decoration={
				markings,
				mark=at position #1 with {\arrow{latex}}},postaction={decorate}}}
\tikzset{-<-/.style={decoration={
				markings,
				mark=at position #1 with {\arrowreversed{latex}}},postaction={decorate}}}
				\tikzset{
	master/.style={
		execute at end picture={
			\coordinate (lower right) at (current bounding box.south east);
			\coordinate (upper left) at (current bounding box.north west);
		}
	},
	slave/.style={
		execute at end picture={
			\pgfresetboundingbox
			\path (upper left) rectangle (lower right);
		}
	}
}
\tikzset{cross/.style={cross out, draw, 
         minimum size=2*(#1-\pgflinewidth), 
         inner sep=0pt, outer sep=0pt}}
\def\XXint#1#2#3{{\setbox0=\hbox{$#1{#2#3}{\int}$}
\vcenter{\hbox{$#2#3$}}\kern-.5\wd0}}
\newtheorem{theorem}{Theorem}[section]
\newtheorem{proposition}[theorem]{Proposition}
\newtheorem{lemma}[theorem]{Lemma}
\newtheorem{definition}[theorem]{Definition}
\newtheorem{assumption}[theorem]{Assumption}
\newtheorem{remark}[theorem]{Remark}
\newtheorem{RHproblem}[theorem]{RH problem}
\newtheorem{figuretext}{Figure}
\numberwithin{equation}{section}
\title[Scattering for the Boussinesq equation]
{Direct and inverse scattering for the Boussinesq equation with solitons}
\author{C. Charlier}
\address{CC: Centre for Mathematical Sciences, Lund University, 221 00 Lund, Sweden.}
\email{christophe.charlier@math.lu.se}
\author{J. Lenells}
\address{JL: Department of Mathematics, KTH Royal Institute of Technology, 100 44 Stockholm, Sweden.}
\email{jlenells@kth.se}
\begin{document}

\begin{abstract}
In a recent paper, we developed an inverse scattering approach to the Boussinesq equation in the case when no solitons are present. In this paper, we extend this approach to include solutions with solitons.
\end{abstract}

\maketitle

\noindent
{\small{\sc AMS Subject Classification (2020)}: 35C08, 35G25, 35Q15, 37K40, 76B15.}

\noindent
{\small{\sc Keywords}: Boussinesq equation, solitons, Riemann-Hilbert problem.}


\section{Introduction}

In 1872, Joseph Boussinesq introduced an approximate model for the propagation of dispersive waves of small amplitude in shallow water \cite{B1872}. In dimensionless units, this equation takes the form
\begin{align}\label{badboussinesq}
u_{tt} = u_{xx} + (u^2)_{xx} + u_{xxxx},
\end{align}
where $u(x,t)$ is a real-valued function and subscripts denote partial derivatives. The Boussinesq equation (\ref{badboussinesq}) is linearly unstable and is therefore also referred to as the ``bad" Boussinesq equation. It was understood in the 1970's that (\ref{badboussinesq}) is, at least formally, a completely integrable system. The construction of multi-soliton solutions of (\ref{badboussinesq}) by Hirota \cite{H1973} and the derivation of a Lax pair by Zakharov \cite{Z1974} were the key steps leading to this understanding. Berryman \cite{B1976} later showed that the solitons of (\ref{badboussinesq}) are linearly unstable (but left open the problem of their nonlinear stability). More recently, Bogdanov and Zakharov \cite{BZ2002} studied solitons of \eqref{badboussinesq} using a $\bar{\partial}$-dressing method.

In \cite{CLmain}, we developed an Inverse Scattering Transform (IST) approach to \eqref{badboussinesq} in the case when no solitons are present. In particular, assuming no solitons, we obtained a representation formula for the solution of the initial value problem in terms of the solution of a $1\times 3$ Riemann--Hilbert (RH) problem.
The purpose of this paper is to extend these results to the case when solitons are present. The solitons correspond to zeros of certain spectral functions, and as is usual we will restrict ourselves to the generic case of a finite number of simple zeros. 
Our main results can be summarized as follows:
\begin{enumerate}[$-$]
\item Theorem \ref{directth} treats the direct problem: it shows how to construct appropriate scattering data from some given initial data $\{u_{0}(x):=u(x,0),u_{1}(x):=u_{t}(x,0)\}$. 

\item Theorem \ref{inverseth} solves the inverse problem: it shows that the solution $u(x,t)$ can be recovered from the scattering data. 

\item Theorem \ref{IVPth} provides the solution of the initial value problem for (\ref{badboussinesq}) in the presence of solitons via inverse scattering.

\item Theorem \ref{blowupth} is a blow-up result and Theorem \ref{globalth} concerns the existence of global solutions with solitons. 
\end{enumerate}

Besides the objective of developing direct and inverse scattering transforms for the Boussinesq equation with solitons, an important motivation for this work is the larger goal of making progress on the soliton resolution conjecture for the Boussinesq equation. In fact, due to the ill-posedness of (\ref{badboussinesq}), the formulation of a soliton resolution conjecture for (\ref{badboussinesq}) requires some care, but roughly speaking, we expect that any generic solution of (\ref{badboussinesq}) within the physically relevant class singled out by Assumption \ref{nounstablemodesassumption} below, eventually decomposes into a non-decaying soliton component superimposed on a decaying radiation component. 

Another key motivation for the present paper is to cast light on the stability of soliton solutions of (\ref{badboussinesq}). We mentioned above that Berryman \cite{B1976} proved that the solitons of (\ref{badboussinesq}) are linearly unstable. Berryman's result is not surprising in light of the ill-posedness of (\ref{badboussinesq}). But it raises the question of whether the solitons of (\ref{badboussinesq}) can actually be observed. Equation (\ref{badboussinesq}) models the evolution of water waves \cite{B1872} (see also e.g. \cite{J1997}) and one would expect it to support soliton solutions that approximate Russell's famous wave of translation, which is characterized by its remarkable stability \cite{R1845}. So how can the solitons be linearly unstable? At the core of this apparent contradiction is the assumption made in the derivation of (\ref{badboussinesq}) that only low-frequency modes are present. Indeed, the ill-posedness of the Boussinesq equation is a consequence of the fact that high-frequency Fourier modes grow or decay exponentially in time. The solutions of (\ref{badboussinesq}) only remain good approximations of water waves as long as these high-frequency modes are suppressed. On the other hand, the nonlinearity of (\ref{badboussinesq}) makes arguments based on Fourier modes too simplistic. To treat the fully nonlinear problem, a more sophisticated analysis that incorporates the nonlinearity is required. The inverse scattering transform can be viewed as the construction of a {\it nonlinear} Fourier transform, and it is therefore perfectly suited to address this problem. 

We intend to come back to the soliton resolution conjecture and to the stability of solitons for the Boussinesq equation in future publications.

The present paper is meant to be read in conjunction with \cite{CLmain}. Because of the similarities with \cite[Sections 3--6]{CLmain}, we will focus on novelties due to the presence of solitons, and we will omit proofs when they are identical (or very similar) to the corresponding proofs in \cite{CLmain}. 
Since the spectral problem associated to (\ref{badboussinesq}) is a third-order equation, the inclusion of solitons is not straightforward but involves several complications. 
Some of these complications are related to the fact that the basic eigenfunctions of the Lax pair, which we denote by $X$ and $Y$, are not sufficient to construct a RH problem in the third-order case; it is also necessary to use another set of eigenfunctions, which we denote by $X^A$ and $Y^A$. In the case of compactly supported initial data, the eigenfunctions $X,Y,X^A,Y^A$ are defined in the whole complex plane except at isolated points, but in the general case the domains of definitions of their entries are much smaller. Much effort will be spent to handle these restricted domains of definition. In a few places, we will indicate how the arguments can be simplified in the case when the initial data has compact support.

Our main results are stated in Section \ref{mainsec}. The direct and inverse scattering problems for (\ref{badboussinesq}) are treated in Sections \ref{directsec} and \ref{inversesec}, respectively. In the appendix, which is an important part of this work, we consider the regularity properties of the one-soliton solutions and single breather solutions of (\ref{badboussinesq}).

\subsection{Notation}\label{notationsubsec}
We use the following notation throughout the paper.

\begin{enumerate}[$-$]
\item $C>0$ and $c>0$ denote generic constants that may change within a computation.

\item $[A]_1$, $[A]_2$, and $[A]_3$ denote the first, second, and third columns of a $3 \times 3$ matrix $A$.


\item $\D = \{k \in \C \, | \, |k| < 1\}$ denotes the open unit disk and $\partial \D = \{k \in \C \, | \, |k| = 1\}$ denotes the unit circle. 


\item $D_\epsilon(k)$ denotes the open disk of radius $\epsilon$ centered at a point $k \in \C$.

\item $\mathcal{S}(\R)$ denotes the Schwartz space of rapidly decreasing functions on $\R$.

\item $\kappa_{j} = e^{\frac{\pi i(j-1)}{3}}$, $j=1,\ldots,6$, denote the sixth roots of unity, see Figure \ref{fig: Dn}, and we let $\mathcal{Q} := \{\kappa_{j}\}_{j=1}^{6}$ and $\hat{\mathcal{Q}} := \mathcal{Q} \cup \{0\}$.

\item $D_n$, $n = 1, \dots, 6$, denote the open subsets of the complex plane shown in Figure \ref{fig: Dn}.

\item $\Gamma = \cup_{j=1}^9 \Gamma_j$ denotes the contour shown and oriented as in Figure \ref{fig: Dn}, and $\hat{\Gamma}_{j} = \Gamma_{j} \cup \partial \D$ denotes the union of $\Gamma_j$ and the unit circle. 

\end{enumerate}

\section{Main results}\label{mainsec}
Equation \eqref{badboussinesq} can be rewritten as the system
\begin{align}\label{boussinesqsystem}
& \begin{cases}
 v_{t} = u_{x} + (u^2)_{x} + u_{xxx},
 \\
 u_t = v_x.
\end{cases}
\end{align}
The initial value problems for \eqref{badboussinesq} and \eqref{boussinesqsystem} are equivalent, provided that the initial data $u_{1}(x):=u_{t}(x,0)$ satisfies the mass conservation condition $\int_{\R}u_{1}(x)dx=0$. Physically, this condition ensures that the total mass $\int_\R u dx$ is conserved in time.

\begin{figure}
\begin{center}
\begin{tikzpicture}[scale=0.9]
\node at (0,0) {};
\draw[black,line width=0.45 mm,->-=0.4,->-=0.85] (0,0)--(30:4);
\draw[black,line width=0.45 mm,->-=0.4,->-=0.85] (0,0)--(90:4);
\draw[black,line width=0.45 mm,->-=0.4,->-=0.85] (0,0)--(150:4);
\draw[black,line width=0.45 mm,->-=0.4,->-=0.85] (0,0)--(-30:4);
\draw[black,line width=0.45 mm,->-=0.4,->-=0.85] (0,0)--(-90:4);
\draw[black,line width=0.45 mm,->-=0.4,->-=0.85] (0,0)--(-150:4);

\draw[black,line width=0.45 mm] ([shift=(-180:2.5cm)]0,0) arc (-180:180:2.5cm);
\draw[black,arrows={-Triangle[length=0.2cm,width=0.18cm]}]
($(3:2.5)$) --  ++(90:0.001);
\draw[black,arrows={-Triangle[length=0.2cm,width=0.18cm]}]
($(57:2.5)$) --  ++(-30:0.001);
\draw[black,arrows={-Triangle[length=0.2cm,width=0.18cm]}]
($(123:2.5)$) --  ++(210:0.001);
\draw[black,arrows={-Triangle[length=0.2cm,width=0.18cm]}]
($(177:2.5)$) --  ++(90:0.001);
\draw[black,arrows={-Triangle[length=0.2cm,width=0.18cm]}]
($(243:2.5)$) --  ++(330:0.001);
\draw[black,arrows={-Triangle[length=0.2cm,width=0.18cm]}]
($(297:2.5)$) --  ++(210:0.001);

\draw[black,line width=0.15 mm] ([shift=(-30:0.55cm)]0,0) arc (-30:30:0.55cm);

\node at (0.8,0) {$\tiny \frac{\pi}{3}$};

\node at (-5:2.77) {\footnotesize $\Gamma_8$};
\node at (60:2.9) {\footnotesize $\Gamma_9$};
\node at (112:2.73) {\footnotesize $\Gamma_7$};
\node at (181:2.9) {\footnotesize $\Gamma_8$};
\node at (233:2.71) {\footnotesize $\Gamma_9$};
\node at (300:2.83) {\footnotesize $\Gamma_7$};

\node at (77:1.45) {\footnotesize $\Gamma_1$};
\node at (160:1.45) {\footnotesize $\Gamma_2$};
\node at (-163:1.45) {\footnotesize $\Gamma_3$};
\node at (-77:1.45) {\footnotesize $\Gamma_4$};
\node at (-42:1.45) {\footnotesize $\Gamma_5$};
\node at (43:1.45) {\footnotesize $\Gamma_6$};

\node at (84:3.3) {\footnotesize $\Gamma_4$};
\node at (155:3.3) {\footnotesize $\Gamma_5$};
\node at (-156:3.3) {\footnotesize $\Gamma_6$};
\node at (-84:3.3) {\footnotesize $\Gamma_1$};
\node at (-35:3.3) {\footnotesize $\Gamma_2$};
\node at (36:3.3) {\footnotesize $\Gamma_3$};

\end{tikzpicture}
\hspace{0.5cm}
\begin{tikzpicture}[scale=0.9]
\node at (0,0) {};
\draw[black,line width=0.45 mm] (0,0)--(30:4);
\draw[black,line width=0.45 mm] (0,0)--(90:4);
\draw[black,line width=0.45 mm] (0,0)--(150:4);
\draw[black,line width=0.45 mm] (0,0)--(-30:4);
\draw[black,line width=0.45 mm] (0,0)--(-90:4);
\draw[black,line width=0.45 mm] (0,0)--(-150:4);

\draw[black,line width=0.45 mm] ([shift=(-180:2.5cm)]0,0) arc (-180:180:2.5cm);
\draw[black,line width=0.15 mm] ([shift=(-30:0.55cm)]0,0) arc (-30:30:0.55cm);

\node at (120:1.6) {\footnotesize{$D_{1}$}};
\node at (-60:3.7) {\footnotesize{$D_{1}$}};

\node at (180:1.6) {\footnotesize{$D_{2}$}};
\node at (0:3.7) {\footnotesize{$D_{2}$}};

\node at (240:1.6) {\footnotesize{$D_{3}$}};
\node at (60:3.7) {\footnotesize{$D_{3}$}};

\node at (-60:1.6) {\footnotesize{$D_{4}$}};
\node at (120:3.7) {\footnotesize{$D_{4}$}};

\node at (0:1.6) {\footnotesize{$D_{5}$}};
\node at (180:3.7) {\footnotesize{$D_{5}$}};

\node at (60:1.6) {\footnotesize{$D_{6}$}};
\node at (-120:3.7) {\footnotesize{$D_{6}$}};

\node at (0.8,0) {$\tiny \frac{\pi}{3}$};

\draw[fill] (0:2.5) circle (0.1);
\draw[fill] (60:2.5) circle (0.1);
\draw[fill] (120:2.5) circle (0.1);
\draw[fill] (180:2.5) circle (0.1);
\draw[fill] (240:2.5) circle (0.1);
\draw[fill] (300:2.5) circle (0.1);

\node at (0:2.9) {\footnotesize{$\kappa_1$}};
\node at (60:2.85) {\footnotesize{$\kappa_2$}};
\node at (120:2.85) {\footnotesize{$\kappa_3$}};
\node at (180:2.9) {\footnotesize{$\kappa_4$}};
\node at (240:2.85) {\footnotesize{$\kappa_5$}};
\node at (300:2.85) {\footnotesize{$\kappa_6$}};

\draw[dashed] (-4.3,-3.8)--(-4.3,3.8);

\end{tikzpicture}
\end{center}
\begin{figuretext}\label{fig: Dn}
The contour $\Gamma = \cup_{j=1}^9 \Gamma_j$ in the complex $k$-plane (left) and the open sets $D_{n}$, $n=1,\ldots,6$, together with the sixth roots of unity $\kappa_j$, $j = 1, \dots, 6$ (right).
\end{figuretext}
\end{figure}

\subsection{The direct problem}

Let $\omega := e^{\frac{2\pi i}{3}}$ and define $\{l_j(k), z_j(k)\}_{j=1}^3$ by
\begin{align}\label{lmexpressions intro}
& l_{j}(k) = i \frac{\omega^{j}k + (\omega^{j}k)^{-1}}{2\sqrt{3}}, \qquad z_{j}(k) = i \frac{(\omega^{j}k)^{2} + (\omega^{j}k)^{-2}}{4\sqrt{3}}, \qquad k \in \C\setminus \{0\}.
\end{align}
Let $P(k)$ and $\mathsf{U}(x,k)$ be given by
\begin{align*}
P(k) = \begin{pmatrix}
1 & 1 & 1  \\
l_{1}(k) & l_{2}(k) & l_{3}(k) \\
l_{1}(k)^{2} & l_{2}(k)^{2} & l_{3}(k)^{2}
\end{pmatrix}, \quad \mathsf{U}(x,k) = P(k)^{-1} \begin{pmatrix}
0 & 0 & 0 \\
0 & 0 & 0 \\
-\frac{u_{0x}}{4}-\frac{iv_{0}}{4\sqrt{3}} & -\frac{u_{0}}{2} & 0
\end{pmatrix} P(k),
\end{align*} 
where $v_{0}(x) = \int_{-\infty}^{x}u_{1}(x')dx'$. Let $X(x,k), X^A(x,k), Y(x,k)$, and $Y^A(x,k)$ be the unique solutions of the Volterra integral equations
\begin{align}  \nonumber
& X(x,k)  =  I  -  \int_x^{\infty}  e^{(x-x')\widehat{\mathcal{L}(k)}} (\mathsf{U}X)(x',k)dx', 
	\\\nonumber
& X^A(x,k)  =  I  +  \int_x^{\infty}  e^{-(x-x')\widehat{\mathcal{L}(k)}} (\mathsf{U}^T X^A)(x',k) dx',
	 \\\nonumber
& Y(x,k)  =  I  +  \int_{-\infty}^x  e^{(x-x')\widehat{\mathcal{L}(k)}} (\mathsf{U}Y)(x',k) dx', 
	\\ \label{def of X XA Y YA}
& Y^A(x,k)  =  I  -  \int_{-\infty}^x  e^{-(x-x')\widehat{\mathcal{L}(k)}} (\mathsf{U}^T Y^A)(x',k) dx', 
\end{align}
where $\mathcal{L} = \diag(l_1 , l_2 , l_3)$, $e^{\hat{\mathcal{L}}}$ is the operator which acts
on a $3\times 3$ matrix $M$ by $e^{\hat{\mathcal{L}}}M=e^{\mathcal{L}}Me^{-\mathcal{L}}$, and $\mathsf{U}^T$ is the transpose of $\mathsf{U}$. Define the scattering matrices $s(k)$ and $s^A(k)$ by 
\begin{align}\label{def of s sA}
& s(k) = I - \int_\R e^{-x \widehat{\mathcal{L}(k)}}(\mathsf{U}X)(x,k)dx, & & s^A(k) = I + \int_\R e^{x \widehat{\mathcal{L}(k)}}(\mathsf{U}^T X^A)(x,k)dx.
\end{align}
The scattering data for the Boussinesq equation includes two spectral functions $r_{1}(k)$ and $r_{2}(k)$ which are defined by
\begin{align}\label{r1r2def}
\begin{cases}
r_1(k) = \frac{(s(k))_{12}}{(s(k))_{11}}, & k \in \hat{\Gamma}_{1}\setminus \hat{\mathcal{Q}},
	\\ 
r_2(k) = \frac{(s^A(k))_{12}}{(s^A(k))_{11}}, \quad & k \in \hat{\Gamma}_{4}\setminus \hat{\mathcal{Q}},
\end{cases}
\end{align}	
where the set $\hat{\mathcal{Q}}$ and the contours $\hat{\Gamma}_j = \Gamma_{j} \cup \partial \D$ were defined in Section \ref{notationsubsec}.

\begin{figure}
\begin{center}
\begin{tikzpicture}[master, scale=0.6]
\node at (0,0) {};
\draw[black,line width=0.45 mm] (30:2.5)--(30:4);
\draw[black,line width=0.45 mm] (0,0)--(90:2.5);
\draw[black,line width=0.45 mm] (150:2.5)--(150:4);
\draw[black,line width=0.45 mm] (0,0)--(-30:2.5);
\draw[black,line width=0.45 mm] (-90:2.5)--(-90:4);
\draw[black,line width=0.45 mm] (0,0)--(-150:2.5);

\draw[black,line width=0.45 mm] ([shift=(-180:2.5cm)]0,0) arc (-180:180:2.5cm);

\node at (30:1.4) {$\omega \mathcal{S}$};
\node at (90:3.3) {$\mathcal{S}$};
\node at (150:1.4) {$\omega^{2}\mathcal{S}$};
\node at (210:3.6) {$\omega \mathcal{S}$};
\node at (270:1.4) {$\mathcal{S}$};
\node at (330:3.6) {$\omega^{2} \mathcal{S}$};

\draw[fill] (0:2.5) circle (0.1);
\draw[fill] (60:2.5) circle (0.1);
\draw[fill] (120:2.5) circle (0.1);
\draw[fill] (180:2.5) circle (0.1);
\draw[fill] (240:2.5) circle (0.1);
\draw[fill] (300:2.5) circle (0.1);

\end{tikzpicture} \hspace{2.5cm} \begin{tikzpicture}[slave, scale=0.6]
\node at (0,0) {};
\draw[black,line width=0.45 mm] (0,0)--(30:2.5);
\draw[black,line width=0.45 mm] (90:2.5)--(90:4);
\draw[black,line width=0.45 mm] (0,0)--(150:2.5);
\draw[black,line width=0.45 mm] (-30:2.5)--(-30:4);
\draw[black,line width=0.45 mm] (0,0)--(-90:2.5);
\draw[black,line width=0.45 mm] (-150:2.5)--(-150:4);

\draw[black,line width=0.45 mm] ([shift=(-180:2.5cm)]0,0) arc (-180:180:2.5cm);

\node at (30:3.6) {$-\omega \mathcal{S}$};
\node at (90:1.4) {$-\mathcal{S}$};
\node at (150:3.8) {$-\omega^{2}\mathcal{S}$};
\node at (210:1.4) {$-\omega \mathcal{S}$};
\node at (270:3.2) {$-\mathcal{S}$};
\node at (330:1.4) {$-\omega^{2} \mathcal{S}$};

\draw[fill] (0:2.5) circle (0.1);
\draw[fill] (60:2.5) circle (0.1);
\draw[fill] (120:2.5) circle (0.1);
\draw[fill] (180:2.5) circle (0.1);
\draw[fill] (240:2.5) circle (0.1);
\draw[fill] (300:2.5) circle (0.1);

\draw[dashed] (-5.7,-3.8)--(-5.7,3.8);
\end{tikzpicture}
\end{center}
\begin{figuretext}\label{fig: S}The regions $\{\pm \omega^{j}\mathcal{S}\}_{j=0,1,2}$. The dots represent $\kappa_{j}$, $j=1,\ldots,6$.
\end{figuretext}
\end{figure}

\subsubsection{Solitons}
Solitons correspond to zeros of the functions $s_{11}(k)$ and $s^A_{11}(k)$ appearing in the denominators in (\ref{r1r2def}).
It follows from \cite[Propositions 3.5 and 3.9]{CLmain} that $s_{11}$ and $s^A_{11}$ have smooth extensions to $\omega^{2}\hat{\mathcal{S}}\setminus \hat{\mathcal{Q}}$ and $-\omega^{2}\hat{\mathcal{S}}\setminus \hat{\mathcal{Q}}$, respectively, where $\hat{\mathcal{S}} := \partial \D \cup \bar{\mathcal{S}}$ and $\mathcal{S}$ is the interior of $\bar{D}_{3}\cup\bar{D}_{4}$ (see Figure \ref{fig: S}).
For simplicity, we will restrict ourselves to the generic case where the zeros of $s_{11}$ and $s^A_{11}$ do not lie on the contour $\Gamma$. The symmetries $s_{11}(k) = s_{11}(\omega/k)$ and $s_{11}^A(k) = \overline{s_{11}(\bar{k}^{-1})}$ (which follow from \cite[Propositions 3.5 and 3.9]{CLmain}) then imply that it is enough to consider the zeros of $s_{11}$ in $D_2$. 

We split the open set $D_{2}$ into three parts: $D_{2} = D_{\mathrm{reg}} \sqcup D_{\mathrm{sing}} \sqcup (D_{2}\cap \R)$, where $D_{2}\cap \R = (-1,0) \cup (1,\infty)$ and
\begin{align*}
& D_{\mathrm{reg}} := D_{2} \cap \big( \{k \,|\, |k|>1, \im k >0\} \cup \{k \,|\, |k|<1, \im k <0\} \big), \\
& D_{\mathrm{sing}} := D_{2} \cap \big( \{k \,|\, |k|>1, \im k <0\} \cup \{k \,|\, |k|<1, \im k >0\} \big).
\end{align*}
Furthermore, we denote the right and left parts of $D_{\mathrm{reg}}$ by $D_{\mathrm{reg}}^{R}$ and $D_{\mathrm{reg}}^{L}$, respectively, and similarly for $D_{\mathrm{sing}}$, see Figure \ref{fig: D2 splitting}. It turns out that 
\begin{enumerate}[$-$]
\item simple zeros of $s_{11}(k)$ in $D_{\mathrm{reg}}^{R}$ correspond to right-moving breather solitons,
\item simple zeros of $s_{11}(k)$ in $D_{\mathrm{reg}}^{L}$ correspond to left-moving breather solitons,
\item simple zeros of $s_{11}(k)$ in $(1,\infty)$ correspond to right-moving solitons, 
\item simple zeros of $s_{11}(k)$ in $(-1,0)$ correspond to left-moving solitons.
\end{enumerate}

One may also consider the solitons of (\ref{badboussinesq}) generated by zeros of $s_{11}$ in $D_{\mathrm{sing}}$; however, we will show in the appendix (see Lemma \ref{breatherlemma}) that the solitons generated by such zeros are singular. More precisely, simple zeros of $s_{11}(k)$ in $D_{\mathrm{sing}}^{R}$ and $D_{\mathrm{sing}}^{L}$ correspond to singular right- and left-moving breathers, respectively, where singular means that the solution is not smooth as a function of $x \in \R$ but has poles on the real axis when viewed as a function of $x$ for any fixed $t$. 

In fact, some of the solitons generated by zeros of $s_{11}(k)$ in $(-1,0) \cup (1,\infty)$ are also singular. In the appendix (see Lemma \ref{onesolitonsingularlemma}), we will derive a condition that exactly characterizes the zeros that give rise to non-singular solitons.

\subsubsection{Scattering data}
In addition to the functions $r_1$  and $r_2$, the scattering data includes the set $\mathsf{Z}$ of zeros of $s_{11}$ and a set of corresponding residue constants $\{c_{k_0}\}_{k_0 \in \mathsf{Z}} \subset \C$. In the case of compactly supported initial data $u_0$ and $v_0$, the residue constants are defined by
 \begin{align}\label{ck0compactdef}
& c_{k_{0}} := \begin{cases} - \frac{s_{13}(k_0)}{\dot{s}_{11}(k_0)}, & k_{0} \in \mathsf{Z}\setminus \mathbb{R}, \\
- \frac{s_{12}(k_0)}{\dot{s}_{11}(k_0)}, & k_{0} \in \mathsf{Z}\cap \mathbb{R}.
\end{cases}
\end{align}

If $u_0$ and $v_0$ do not have compact support, then the entries of the matrix-valued function $s(k)$ have limited domains of definition so the quantities $s_{13}(k_0)$ and $s_{12}(k_0)$ in (\ref{ck0compactdef}) are in general not well-defined. Thus the following more complicated definition of the  $c_{k_{0}}$ is required in the general case: Define the vector-valued function $w(x,k)$ by
\begin{align}\label{wdef}
w = \begin{pmatrix}
Y_{21}^AX_{32}^A - Y_{31}^AX_{22}^A  \\
Y_{31}^AX_{12}^A - Y_{11}^AX_{32}^A  \\
 Y_{11}^AX_{22}^A - Y_{21}^AX_{12}^A 
\end{pmatrix}.
\end{align}
If $k_{0} \in \mathsf{Z}\setminus \mathbb{R}$, then $c_{k_0}$ is defined as the unique complex constant such that
\begin{subequations}\label{ck0def}
\begin{align}\label{ck0def1}
\frac{w(x,k_{0})}{\dot{s}_{11}(k_0)} = c_{k_0} e^{(l_1(k_0)-l_3(k_0))x} [X(x,k_0)]_1 \qquad \text{for all $x \in \R$}.
\end{align}
If $k_{0} \in \mathsf{Z} \cap \mathbb{R}$, then $c_{k_0}$ is defined as the unique complex constant such that
\begin{align}\label{ck0def2}
\frac{[Y(x,k_0)]_2}{\dot{s}_{22}^A(k_0)} = c_{k_0} e^{(l_1(k_0)-l_2(k_0))x} [X(x,k_0)]_1\qquad \text{for all $x \in \R$}.
\end{align}
\end{subequations}
We will show in Theorem \ref{directth} that the $c_{k_0}$ are indeed well-defined by these relations.

\begin{figure}
\begin{center}
\begin{tikzpicture}[scale=0.7]
\node at (0,0) {};
\draw[line width=0.25 mm, dashed,fill=gray!30] (0,0) --  (210:2.5) arc(210:180:2.5) -- cycle;
\draw[line width=0.25 mm, dashed,fill=gray!30] (0,0) --  (30:4) arc(30:0:4) -- cycle;
\draw[line width=0.25 mm, dashed,fill=white!100] (0,0) --  (30:2.5) arc(30:0:2.5) -- cycle;
\draw[white,line width=0.35 mm] ([shift=(0:4cm)]0,0) arc (0:30:4cm);
\draw[white,line width=0.35 mm] (0,0)--(2.5,0);

\draw[black,line width=0.45 mm] (0,0)--(30:4);
\draw[black,line width=0.45 mm] (0,0)--(90:4);
\draw[black,line width=0.45 mm] (0,0)--(150:4);
\draw[black,line width=0.45 mm] (0,0)--(-30:4);
\draw[black,line width=0.45 mm] (0,0)--(-90:4);
\draw[black,line width=0.45 mm] (0,0)--(-150:4);

\draw[black,line width=0.45 mm] ([shift=(-180:2.5cm)]0,0) arc (-180:180:2.5cm);

\node at (193:1.85) {\footnotesize{$D_{\mathrm{reg}}^{L}$}};
\node at (15:3.3) {\footnotesize{$D_{\mathrm{reg}}^{R}$}};

\draw[fill] (0:2.5) circle (0.1);
\draw[fill] (60:2.5) circle (0.1);
\draw[fill] (120:2.5) circle (0.1);
\draw[fill] (180:2.5) circle (0.1);
\draw[fill] (240:2.5) circle (0.1);
\draw[fill] (300:2.5) circle (0.1);

\end{tikzpicture}
\hspace{1.7cm}
\begin{tikzpicture}[scale=0.7]
\node at (0,0) {};
\draw[line width=0.25 mm, dashed,fill=gray!30] (0,0) --  (180:2.5) arc(180:150:2.5) -- cycle;
\draw[line width=0.25 mm, dashed,fill=gray!30] (0,0) --  (0:4) arc(0:-30:4) -- cycle;
\draw[line width=0.25 mm, dashed,fill=white!100] (0,0) --  (0:2.5) arc(0:-30:2.5) -- cycle;
\draw[white,line width=0.35 mm] ([shift=(-30:4cm)]0,0) arc (-30:0:4cm);
\draw[white,line width=0.35 mm] (0,0)--(2.5,0);

\draw[black,line width=0.45 mm] (0,0)--(30:4);
\draw[black,line width=0.45 mm] (0,0)--(90:4);
\draw[black,line width=0.45 mm] (0,0)--(150:4);
\draw[black,line width=0.45 mm] (0,0)--(-30:4);
\draw[black,line width=0.45 mm] (0,0)--(-90:4);
\draw[black,line width=0.45 mm] (0,0)--(-150:4);

\draw[black,line width=0.45 mm] ([shift=(-180:2.5cm)]0,0) arc (-180:180:2.5cm);

\node at (165:1.8) {\footnotesize{$D_{\mathrm{sing}}^{L}$}};
\node at (-15:3.3) {\footnotesize{$D_{\mathrm{sing}}^{R}$}};

\draw[fill] (0:2.5) circle (0.1);
\draw[fill] (60:2.5) circle (0.1);
\draw[fill] (120:2.5) circle (0.1);
\draw[fill] (180:2.5) circle (0.1);
\draw[fill] (240:2.5) circle (0.1);
\draw[fill] (300:2.5) circle (0.1);

\draw[dashed] (-6.3,-3.8)--(-6.3,3.8);

\end{tikzpicture}
\end{center}
\begin{figuretext}\label{fig: D2 splitting}The open regions $D_{\mathrm{reg}}^{R}, D_{\mathrm{reg}}^{L}, D_{\mathrm{sing}}^{R}$, and $D_{\mathrm{sing}}^{L}$.
\end{figuretext}
\end{figure}

\subsubsection{Assumptions}
As mentioned above, we will show in the appendix that the breather solitons generated by zeros of $s_{11}$ in $D_{\mathrm{sing}}$ are singular. We will also show that a zero $k_0$ of $s_{11}(k)$ in $(-1,0) \cup (1,\infty)$ gives rise to a non-singular soliton if and only if the associated residue constant $c_{k_0} \in \C$ obeys the condition $i(\omega^{2}k_{0}^{2} - \omega)c_{k_{0}} \notin (-\infty, 0)$, see Lemma \ref{onesolitonsingularlemma}.

We will restrict ourselves to the case when $s_{11}(k)$ has a finite number of simple zeros corresponding to non-singular solitons. 
More precisely, our results will be valid under the following assumption.

\begin{assumption}[Solitons]\label{solitonassumption}\upshape
Assume that $s_{11}(k)$ has a simple zero at each point in $\mathsf{Z}$, where $\mathsf{Z} \subset D_{\mathrm{reg}} \cup (-1,0) \cup (1,\infty)$ is a set of finite cardinality, and that $s_{11}(k)$ has no other zeros in $k \in (\bar{D}_2 \cup \partial \D) \setminus \hat{\mathcal{Q}}$.
If $k_0 \in \mathsf{Z} \setminus \R$, then we also assume that $s_{22}^A(k_0) \neq 0$. If $k_0 \in \mathsf{Z} \cap \R$, then we also assume that $i(\omega^{2}k_{0}^{2} - \omega)c_{k_{0}} \notin (-\infty, 0)$.
\end{assumption}

The symmetries $s_{11}(k) = s_{11}(\omega/k)$ and $s_{11}^A(k) = \overline{s_{11}(\bar{k}^{-1})}$ imply that if Assumption \ref{solitonassumption} holds then $s_{11}$ is nonzero on $\omega^{2}\hat{\mathcal{S}} \setminus (\hat{\mathcal{Q}}\cup \mathsf{Z} \cup \omega \mathsf{Z}^{-1})$ and $s_{11}^A$ is nonzero on $-\omega^{2}\hat{\mathcal{S}} \setminus ( \hat{\mathcal{Q}} \cup \mathsf{Z}^{-*}\cup \omega \mathsf{Z}^{*})$, where $\mathsf{Z}^{-1}$ is the image of $\mathsf{Z}$ under the map $k\mapsto k^{-1}$, $\mathsf{Z}^{*}$ is the image of $\mathsf{Z}$ under the map $k\mapsto \overline{k}$, and $\mathsf{Z}^{-*}:= (\mathsf{Z}^{*})^{-1}$. 

We further assume that the spectral functions $s(k), s^{A}(k)$ have generic behavior at $k = \pm 1$.

\begin{assumption}[Generic behavior at $k = \pm 1$]\label{originassumption}\upshape
Assume for $k_{\star} =1$ and $k_{\star}=-1$ that
\begin{align*}
& \lim_{k \to k_{\star}} (k-k_{\star}) s(k)_{11} \neq 0, & & \hspace{-0.05cm} \lim_{k \to k_{\star}} (k-k_{\star}) s(k)_{13} \neq 0, & & \hspace{-0.05cm} \lim_{k \to k_{\star}} s(k)_{31} \neq 0, & & \hspace{-0.05cm} \lim_{k \to k_{\star}} s(k)_{33} \neq 0, \\
& \lim_{k \to k_{\star}} (k-k_{\star}) s^{A}(k)_{11} \neq 0, & & \hspace{-0.05cm} \lim_{k \to k_{\star}} (k-k_{\star}) s^{A}(k)_{31} \neq 0, & & \hspace{-0.05cm} \lim_{k \to k_{\star}} s^{A}(k)_{13} \neq 0, & & \hspace{-0.05cm} \lim_{k \to k_{\star}} s^{A}(k)_{33} \neq 0.
\end{align*}
\end{assumption}

\subsubsection{Statement of the theorem}
The following is our main result on the direct scattering problem for (\ref{badboussinesq}).

\begin{theorem}[Direct scattering]\label{directth}
Suppose $u_0,v_0 \in \mathcal{S}(\R)$ are two real-valued functions such that Assumptions \ref{solitonassumption} and \ref{originassumption} hold.
Then the associated scattering data 
$$\{r_1(k), r_2(k), \mathsf{Z}, \{c_{k_0}\}_{k_0 \in \mathsf{Z}}\}$$ 
defined by (\ref{r1r2def}) and (\ref{ck0def}) are well-defined and satisfy the following properties:
\begin{enumerate}[$(i)$]
 \item \label{Theorem2.3itemi}
 $r_1$ and $r_2$ admit extensions such that $r_1 \in C^\infty(\hat{\Gamma}_{1})$\footnote{At $k_\star = \pm i$, the notation $r_1 \in C^\infty(\hat{\Gamma}_{1})$ indicates that $r_1$ has derivatives to all orders along each subcontour of $\hat{\Gamma}_1$ emanating from $k_\star$ and that these derivatives have consistent finite limits as $k \to k_\star$.}
and $r_2 \in C^\infty(\hat{\Gamma}_{4}\setminus \{\omega^{2}, -\omega^{2}\})$.

\item \label{Theorem2.3itemii}
$r_{1}(k)$ is bounded on the whole unit circle, and $r_{1}(\kappa_{j})\neq 0$ for $j=1,\ldots,6$. $r_{2}(k)$ has simple poles at $k=\omega^2$ and $k = -\omega^2$, and simple zeros at $k=\omega$ and $k=-\omega$. Furthermore,
\begin{align}\label{r1r2at0}
r_{1}(1) = r_{1}(-1) = 1, \qquad r_{2}(1) = r_{2}(-1) = -1.
\end{align}

\item \label{Theorem2.3itemiii}
$r_1(k)$ and $r_2(k)$ are rapidly decreasing as $|k| \to \infty$, i.e., for each integer $N \geq 0$,
\begin{align}\label{r1r2rapiddecay}
& \max_{j=0,1,\dots,N}\sup_{k \in \Gamma_{1}} (1+|k|)^N |\partial_k^jr_1(k)| < \infty,  
\qquad
 \max_{j=0,1,\dots,N} \sup_{k \in \Gamma_{4}} (1+|k|)^N|\partial_k^jr_2(k)| < \infty.
\end{align}

\item \label{Theorem2.3itemiv}
For all $k \in \partial \D \setminus \{-\omega,\omega\}$, we have
\begin{align}\label{r1r2 relation on the unit circle}
r_{1}(\tfrac{1}{\omega k}) + r_{2}(\omega k) + r_{1}(\omega^{2} k) r_{2}(\tfrac{1}{k}) = 0.
\end{align}
In fact, \eqref{r1r2 relation on the unit circle} is also equivalent to any of the following two relations:
\begin{align}\label{r1r2 relation on the unit circle new}
r_{2}(k) = \frac{r_{1}( \omega k)r_{1}(\omega^{2} k)-r_{1}(\frac{1}{k})}{1-r_{1}(\omega k)r_{1}(\frac{1}{\omega k})}, \qquad r_{1}(k) = \frac{r_{2}(\omega k)r_{2}(\omega^{2}k)-r_{2}(\frac{1}{k})}{1-r_{2}(\omega k)r_{2}(\frac{1}{\omega k})}.
\end{align}

\item \label{Theorem2.3itemv}
$r_1$ and $r_2$ are related by the symmetry
\begin{align}\label{r1r2 relation with kbar symmetry}
& r_{2}(k) = \tilde{r}(k) \overline{r_{1}(\bar{k}^{-1})} & & \mbox{for all } k \in \hat{\Gamma}_{4}\setminus \{0,\omega^{2}, -\omega^{2}\}, 
\end{align}
where
\begin{align}\label{rtildedef}
\tilde{r}(k):=\frac{\omega^{2}-k^{2}}{1-\omega^{2}k^{2}} & & \mbox{for } k \in \mathbb{C}\setminus \{\omega^{2},-\omega^{2}\}.
\end{align}

\item \label{Theorem2.3itemvi}
$\mathsf{Z}$ is a finite subset of $D_{\mathrm{reg}} \cup (-1,0) \cup (1,\infty)$ and $\{c_{k_0}\}_{k_0 \in \mathsf{Z}} \subset \C$.

\item \label{Theorem2.3itemvii}
For every $k_{0} \in \mathsf{Z}\cap \mathbb{R}$, the residue constant $c_{k_0}$ satisfies the positivity condition
\begin{align}\label{ck0positivitycondition}
i(\omega^{2}k_{0}^{2} - \omega)c_{k_{0}} \geq 0.
\end{align}

\end{enumerate} 
\end{theorem}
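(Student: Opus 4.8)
The plan is to establish each of the seven items by transferring, with modifications for the solitons, the corresponding statements and proofs from \cite{CLmain}. For items $(i)$--$(v)$ the key point is that these are statements about $r_1$ and $r_2$ \emph{away} from the zeros of $s_{11}$ and $s_{11}^A$, and the presence of solitons does not alter the analysis there: $r_1$ is a smooth quotient on $\hat\Gamma_1$ because Assumption~\ref{solitonassumption} places $\mathsf{Z}$ off $\Gamma$ so that $s_{11}$ stays nonzero on $\hat\Gamma_1$, and $r_2$ is smooth on $\hat\Gamma_4\setminus\{\omega^2,-\omega^2\}$ for the same reason, the exceptional points coming, as in the no-soliton case, from the factor $\tilde r(k)$ in the symmetry \eqref{r1r2 relation with kbar symmetry}. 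First I would invoke \cite[Propositions 3.5 and 3.9]{CLmain} and their analogues to get the analyticity/smoothness and rapid decay of the entries of $s$ and $s^A$, then define $r_1,r_2$ by \eqref{r1r2def} and read off $(i)$ and $(iii)$; the normalizations in $(ii)$, including $r_1(\kappa_j)\neq 0$ and the pole/zero structure of $r_2$ at $\pm\omega^2,\pm\omega$, as well as \eqref{r1r2at0}, follow from the explicit behavior of $P(k)$, $\mathsf{U}(x,k)$ and the $l_j,z_j$ at $k=\pm1$ and at the sixth roots of unity, which is exactly the computation done in \cite{CLmain} under Assumption~\ref{originassumption}. The algebraic relations $(iv)$ and the symmetry $(v)$ are consequences of the two-sided factorization $Y = X s$ (and its adjoint version), the unimodularity $\det s = 1$, and the symmetries $s_{11}(k)=s_{11}(\omega/k)$, $s_{11}^A(k)=\overline{s_{11}(\bar k^{-1})}$ together with their companions for the other entries; these are identities among meromorphic functions, so once they are checked on an open arc they propagate to all of $\partial\D\setminus\{\pm\omega\}$ and $\hat\Gamma_4\setminus\{0,\pm\omega^2\}$ respectively, and \eqref{r1r2 relation on the unit circle new} is pure algebra from \eqref{r1r2 relation on the unit circle} using that the relevant denominators do not vanish.

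The genuinely new content is in items $(vi)$ and $(vii)$, and I would treat them together with the well-definedness claims. For $(vi)$: by Assumption~\ref{solitonassumption}, $\mathsf{Z}$ is finite and contained in $D_{\mathrm{reg}}\cup(-1,0)\cup(1,\infty)$ by hypothesis, so the only thing to prove is that the $c_{k_0}$ defined by \eqref{ck0def} exist and are unique complex numbers. For $k_0\in\mathsf{Z}\setminus\R$ this amounts to showing that the vector $w(x,k_0)/\dot s_{11}(k_0)$ is proportional, with an $x$-independent proportionality factor, to $e^{(l_1(k_0)-l_3(k_0))x}[X(x,k_0)]_1$. The mechanism is the standard one for third-order scattering: at a simple zero $k_0$ of $s_{11}$, the $1\times3$ RH-problem data degenerates, two of the three ``columns'' become linearly dependent, and $w$ — which is built from the adjoint eigenfunctions $Y^A,X^A$ as a vector of $2\times2$ minors — turns out to be a multiple of the distinguished solution $[X]_1$ that decays in the appropriate Stokes sector; one proves this by checking that both sides solve the same linear ODE (a first-order scalar equation after restricting to the kernel direction) and have the same asymptotics as $x\to+\infty$, the vanishing of $s_{11}(k_0)$ being exactly what forces the would-be mismatch term to drop out. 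The case $k_0\in\mathsf{Z}\cap\R$ in \eqref{ck0def2} is analogous but uses $[Y]_2$ and $\dot s_{22}^A$, with Assumption~\ref{solitonassumption}'s extra condition $s_{22}^A(k_0)\neq 0$ (resp. the positivity condition) guaranteeing the relevant denominators are nonzero. I expect this to be the main obstacle: unlike in the compactly supported case, the entries $X_{ij}^A$, $Y_{ij}^A$ are defined only on restricted subdomains of $\C$, so one must first verify that all the minors entering $w$ in \eqref{wdef} are simultaneously analytic in a neighborhood of $k_0$ — this is where the careful bookkeeping of domains of definition (inherited from \cite{CLmain}) is essential — and only then can the proportionality be established.

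For $(vii)$, the positivity condition $i(\omega^2k_0^2-\omega)c_{k_0}\ge 0$ for real $k_0\in\mathsf{Z}$: the plan is to derive it from the reality of $u_0,v_0$, which translates into a symmetry of the eigenfunctions and scattering data, combined with the defining relation \eqref{ck0def2}. Concretely, for $k_0\in(-1,0)\cup(1,\infty)$ the relevant eigenfunction quantities $[Y(x,k_0)]_2$, $[X(x,k_0)]_1$, and $\dot s_{22}^A(k_0)$ are real (or have a definite phase) because the corresponding $l_j(k_0)$ are purely imaginary and the symmetry under complex conjugation of the Lax pair maps these objects to themselves up to an explicit factor; evaluating \eqref{ck0def2} at a convenient $x$ (say $x\to\pm\infty$, where $Y\to I$ or $X\to I$ up to the triangular scattering structure) then expresses $c_{k_0}$ as a ratio of real quantities, and tracking the sign through the factor $e^{(l_1-l_2)x}$ and the normalization in $l_j,z_j$ produces exactly the claimed sign of $i(\omega^2k_0^2-\omega)c_{k_0}$. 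Since Assumption~\ref{solitonassumption} already stipulates $i(\omega^2k_0^2-\omega)c_{k_0}\notin(-\infty,0)$, what remains is to upgrade ``$\notin(-\infty,0)$'' to ``$\ge 0$'', i.e. to rule out complex values — and that is precisely what the reality symmetry delivers, forcing $i(\omega^2k_0^2-\omega)c_{k_0}$ to be real. I would close by noting, as in \cite{CLmain}, that the analogous statements for $s_{11}^A$ and its zeros follow automatically from the symmetries $s_{11}^A(k)=\overline{s_{11}(\bar k^{-1})}$ and $s_{11}(k)=s_{11}(\omega/k)$, so no separate argument is needed.
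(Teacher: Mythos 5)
Your treatment of items $(\ref{Theorem2.3itemi})$--$(\ref{Theorem2.3itemv})$ matches the paper (Assumption \ref{solitonassumption} keeps $\mathsf{Z}$ off $\Gamma$, so the solitonless proofs carry over verbatim), and your outline for the well-definedness of $c_{k_0}$ at $k_0\in\mathsf{Z}\setminus\R$ has the right shape: $w$ is the cross product of the rows of the matrix $\mathsf{A}$ built from $[Y^A]_1,[X^A]_2$, and $s_{11}(k_0)=0$ forces $[X]_1$ into $\ker\mathsf{A}$. But even there the mechanism you state — ``both sides solve the same ODE and have the same asymptotics as $x\to+\infty$'' — does not give proportionality at each fixed $x$; what is needed is that $\ker\mathsf{A}$ is one-dimensional, which the paper gets from $\rank\mathsf{A}=2$ via the $x\to-\infty$ asymptotics ($[Y^A]_1\to(1,0,0)$, $X^A_{22}\to s^A_{22}(k_0)\neq 0$), and the orthogonality relations themselves are obtained by approximating $(u_0,v_0)$ by compactly supported cutoffs $(\eta_i u_0,\eta_i v_0)$, because the factorization identities behind $s^A_{22}\,[X]_1\cdot[Y^A]_1=s_{11}\,[Y]_2\cdot[X^A]_2$ involve entries of $s$ that are not defined for general Schwartz data. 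You flag the domain problem but do not supply this approximation device, which is the actual engine of the paper's argument.

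The more serious gaps are in the real case of $(\ref{Theorem2.3itemvi})$ and in $(\ref{Theorem2.3itemvii})$. For $k_0\in\mathsf{Z}\cap\R$ you call the argument ``analogous'' and invoke the extra condition $s_{22}^A(k_0)\neq 0$; but for real $k_0$ the symmetry $\overline{s^A_{22}(\bar k)}=s_{11}(k)$ forces $s^A_{22}(k_0)=0$ (only $\dot s^A_{22}(k_0)\neq 0$, by simplicity), and this case is the hardest part of the paper's proof: one must track the zeros $k_0^{(i)}$ of the truncated data via the argument principle, split into the cases $k_0^{(i)}\in\R$ and $k_0^{(i)}\notin\R$, and in the first case use Lemma \ref{s13s31lemma} ($s_{13}(k_0)=s_{31}(k_0)=0$ for compactly supported data) while in the second one needs an extra rank argument based on the residue of the $R$-symmetry \eqref{MsymmR}. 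None of this is in your plan. For $(\ref{Theorem2.3itemvii})$, your mechanism is incorrect as stated: the $l_j(k_0)$ are not purely imaginary for real $k_0$ (only $l_1-l_2$ is real), the conjugation symmetry of this third-order Lax pair is not componentwise reality of the eigenfunctions but the $R$-symmetry, which involves the matrix factor $R(k)$ and an explicit $u_0(x)$-dependent term (the matrix $\mathsf{Q}$ in the paper), and evaluating \eqref{ck0def2} as $x\to\pm\infty$ to express $c_{k_0}$ as a ratio of scattering entries is exactly what fails for non-compactly-supported data, since $s_{12}(k_0)$, $s_{13}(k_0)$ are then undefined — this is the very reason the definition \eqref{ck0def} is used. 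The paper instead derives $\overline{[Y^A]_1}=\mathsf{Q}[Y]_2$ and $\overline{[X^A]_2}=\overline{\tilde r(k_0)}\,\mathsf{Q}[X]_1$ from \eqref{MsymmR}, combines them with a cross-product/residue computation to get $c_{k_0}=-\bar c_{k_0}\overline{\tilde r(k_0)}$, i.e.\ $i(\omega^2k_0^2-\omega)c_{k_0}\in\R$, and only then uses the assumption to upgrade to \eqref{ck0positivitycondition}; your sketch would need to be replaced by an argument of this kind.
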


The proof of Theorem \ref{directth} is presented in Section \ref{directproofsubsec}.

\subsection{The inverse problem}
We now consider the inverse problem of reconstructing the solution $\{u(x,t), v(x,t)\}$ from the scattering data. Our solution of the inverse problem involves a row-vector RH problem, whose jump matrix is defined on a contour $\Gamma$ in the complex $k$-plane and is expressed in terms of $r_1(k)$ and $r_2(k)$. The solution $n(x,t,k)$ of this RH problem has poles at the points in $\hat{\mathsf{Z}}$, where
\begin{align*}
\hat{\mathsf{Z}} := \mathsf{Z} \cup \mathsf{Z}^{*} \cup \omega \mathsf{Z} \cup \omega \mathsf{Z}^{*} \cup \omega^{2} \mathsf{Z} \cup \omega^{2} \mathsf{Z}^{*} \cup \mathsf{Z}^{-1} \cup \mathsf{Z}^{-*} \cup \omega \mathsf{Z}^{-1} \cup \omega \mathsf{Z}^{-*} \cup \omega^{2} \mathsf{Z}^{-1} \cup \omega^{2} \mathsf{Z}^{-*},
\end{align*}
with $\mathsf{Z}^{-1}=\{k_{0}^{-1} \,|\, k_{0}\in \mathsf{Z}\}$, $\mathsf{Z}^{*}=\{\bar{k}_{0} \,|\, k_{0}\in \mathsf{Z}\}$, and $\mathsf{Z}^{-*}=\{\bar{k}_{0}^{-1} \,|\, k_{0}\in \mathsf{Z}\}$. The residue conditions for $n$ at the points in $\hat{\mathsf{Z}}$ are expressed in terms of the residue constants $\{c_{k_0}\}_{k_0 \in \mathsf{Z}}$. 

To state the RH problem for $n$, let $\theta_{ij}(x,t, k)$ for $1 \leq j<i \leq 3$ be complex-valued functions given by
\begin{align}\label{def of Phi ij}
\theta_{ij}(x,t,k) = (l_{i}(k)-l_{j}(k))x + (z_{i}(k)-z_{j}(k))t,
\end{align}
where $\{l_j(k), z_j(k)\}_{j=1}^3$ are defined by (\ref{lmexpressions intro}). The jump matrix $v(x,t,k)$ is defined for $k \in \Gamma$ by
\begin{align}\nonumber
&  v_1 = 
  \begin{pmatrix}  
 1 & - r_1(k)e^{-\theta_{21}} & 0 \\
  r_1(\frac{1}{k})e^{\theta_{21}} & 1 - r_1(k)r_1(\frac{1}{k}) & 0 \\
  0 & 0 & 1
  \end{pmatrix},
	\quad  v_2 = 
  \begin{pmatrix}   
 1 & 0 & 0 \\
 0 & 1 - r_2(\omega k)r_2(\frac{1}{\omega k}) & -r_2(\frac{1}{\omega k})e^{-\theta_{32}} \\
 0 & r_2(\omega k)e^{\theta_{32}} & 1 
    \end{pmatrix},
   	\\ \nonumber
  &v_3 = 
  \begin{pmatrix} 
 1 - r_1(\omega^2 k)r_1(\frac{1}{\omega^2 k}) & 0 & r_1(\frac{1}{\omega^2 k})e^{-\theta_{31}} \\
 0 & 1 & 0 \\
 -r_1(\omega^2 k)e^{\theta_{31}} & 0 & 1  
  \end{pmatrix},
	\quad   v_4 = 
  \begin{pmatrix}  
  1 - r_2(k)r_{2}(\frac{1}{k}) & -r_2(\frac{1}{k}) e^{-\theta_{21}} & 0 \\
  r_2(k)e^{\theta_{21}} & 1 & 0 \\
  0 & 0 & 1
   \end{pmatrix},
   	\\ \nonumber
&  v_5 = 
  \begin{pmatrix}
  1 & 0 & 0 \\
  0 & 1 & -r_1(\omega k)e^{-\theta_{32}} \\
  0 & r_1(\frac{1}{\omega k})e^{\theta_{32}} & 1 - r_1(\omega k)r_1(\frac{1}{\omega k}) 
  \end{pmatrix},
	\quad   v_6 = 
  \begin{pmatrix} 
  1 & 0 & r_2(\omega^2 k)e^{-\theta_{31}} \\
  0 & 1 & 0 \\
  -r_2(\frac{1}{\omega^2 k})e^{\theta_{31}} & 0 & 1 - r_2(\omega^2 k)r_2(\frac{1}{\omega^2 k})
   \end{pmatrix}, \nonumber \\
& v_{7} = \begin{pmatrix}
1 & -r_{1}(k)e^{-\theta_{21}} & r_{2}(\omega^{2}k)e^{-\theta_{31}} \\
-r_{2}(k)e^{\theta_{21}} & 1+r_{1}(k)r_{2}(k) & \big(r_{2}(\frac{1}{\omega k})-r_{2}(k)r_{2}(\omega^{2}k)\big)e^{-\theta_{32}} \\
r_{1}(\omega^{2}k)e^{\theta_{31}} & \big(r_{1}(\frac{1}{\omega k})-r_{1}(k)r_{1}(\omega^{2}k)\big)e^{\theta_{32}} & f(\omega^{2}k)
\end{pmatrix}, \nonumber \\
& v_{8} = \begin{pmatrix}
f(k) & r_{1}(k)e^{-\theta_{21}} & \big(r_{1}(\frac{1}{\omega^{2} k})-r_{1}(k)r_{1}(\omega k)\big)e^{-\theta_{31}} \\
r_{2}(k)e^{\theta_{21}} & 1 & -r_{1}(\omega k) e^{-\theta_{32}} \\
\big( r_{2}(\frac{1}{\omega^{2}k})-r_{2}(\omega k)r_{2}(k) \big)e^{\theta_{31}} & -r_{2}(\omega k) e^{\theta_{32}} & 1+r_{1}(\omega k)r_{2}(\omega k)
\end{pmatrix}, \nonumber \\
& v_{9} = \begin{pmatrix}
1+r_{1}(\omega^{2}k)r_{2}(\omega^{2}k) & \big( r_{2}(\frac{1}{k})-r_{2}(\omega k)r_{2}(\omega^{2} k) \big)e^{-\theta_{21}} & -r_{2}(\omega^{2}k)e^{-\theta_{31}} \\
\big(r_{1}(\frac{1}{k})-r_{1}(\omega k) r_{1}(\omega^{2} k)\big)e^{\theta_{21}} & f(\omega k) & r_{1}(\omega k)e^{-\theta_{32}} \\
-r_{1}(\omega^{2}k)e^{\theta_{31}} & r_{2}(\omega k) e^{\theta_{32}} & 1
\end{pmatrix}, \label{vdef}
\end{align}
where $v_j$ denotes the restriction of $v$ to $\Gamma_{j}$, and 
\begin{align}\label{def of f}
f(k) := 1+r_{1}(k)r_{2}(k) + r_{1}(\tfrac{1}{\omega^{2}k})r_{2}(\tfrac{1}{\omega^{2}k}), \qquad k \in \partial \mathbb{D}.
\end{align}
Let $\Gamma_\star = \{i\kappa_j\}_{j=1}^6 \cup \{0\}$ be the set of intersection points of $\Gamma$.

\begin{RHproblem}[RH problem for $n$]\label{RHn}
Find a $1 \times 3$-row-vector valued function $n(x,t,k)$ with the following properties:
\begin{enumerate}[$(a)$]
\item\label{RHnitema} $n(x,t,\cdot) : \C \setminus (\Gamma\cup \hat{\mathsf{Z}}) \to \mathbb{C}^{1 \times 3}$ is analytic.

\item\label{RHnitemb} The limits of $n(x,t,k)$ as $k$ approaches $\Gamma \setminus \Gamma_\star$ from the left and right exist, are continuous on $\Gamma \setminus \Gamma_\star$, and are denoted by $n_+$ and $n_-$, respectively. Furthermore, they are related by
\begin{align}\label{njump}
  n_+(x,t,k) = n_-(x, t, k) v(x, t, k) \qquad \text{for} \quad k \in \Gamma \setminus \Gamma_\star.
\end{align}

\item\label{RHnitemc} $n(x,t,k) = O(1)$ as $k \to k_{\star} \in \Gamma_\star$.

\item\label{RHnitemd} For $k \in \C \setminus (\Gamma \cup \hat{\mathsf{Z}})$, $n$ obeys the symmetries
\begin{align}\label{nsymm}
n(x,t,k) = n(x,t,\omega k)\mathcal{A}^{-1} = n(x,t,k^{-1}) \mathcal{B},
\end{align}
where $\mathcal{A}$ and $\mathcal{B}$ are the matrices defined by 
\begin{align}\label{def of Acal and Bcal}
\mathcal{A} := \begin{pmatrix}
0 & 0 & 1 \\
1 & 0 & 0 \\
0 & 1 & 0
\end{pmatrix} \qquad \mbox{ and } \qquad \mathcal{B} := \begin{pmatrix}
0 & 1 & 0 \\
1 & 0 & 0 \\
0 & 0 & 1
\end{pmatrix}.
\end{align}

\item\label{RHniteme} $n(x,t,k) = (1,1,1) + O(k^{-1})$ as $k \to \infty$.

\item\label{RHnitemf} 
At each point of $\hat{\mathsf{Z}}$, two entries of $n$ are analytic while one entry has (at most) a simple pole. The following residue conditions hold at the points in $\hat{\mathsf{Z}}$: for each $k_{0}\in \mathsf{Z}\setminus \mathbb{R}$, 
\begin{align}\nonumber
& \underset{k = k_0}{\res} n_3(k) = c_{k_{0}}e^{-\theta_{31}(k_0)} n_1(k_0), & & \underset{k = \omega k_0}{\res} n_2(k) = \omega c_{k_{0}}e^{-\theta_{31}(k_0)} n_3(\omega k_0), 
	\\ \nonumber
& \underset{k = \omega^2 k_0}{\res} n_1(k) = \omega^2 c_{k_{0}}e^{-\theta_{31}(k_0)} n_2(\omega^2 k_0), & & \underset{k = k_0^{-1}}{\res} n_3(k) = - k_0^{-2} c_{k_{0}}e^{-\theta_{31}(k_0)} n_2(k_0^{-1}),	
	\\\nonumber
&  \underset{k = \omega^2k_0^{-1}}{\res} n_1(k) = -\tfrac{\omega^2}{k_0^{2}} c_{k_{0}}e^{-\theta_{31}(k_0)} n_3(\omega^2 k_0^{-1}), & & \underset{k = \omega k_0^{-1}}{\res} n_2(k) 
= -\tfrac{\omega}{k_0^{2}} c_{k_{0}}e^{-\theta_{31}(k_0)} n_1(\omega k_0^{-1}), 
	 \\ \nonumber
& \underset{k = \bar{k}_0}{\res} n_2(k) = d_{k_0} e^{\theta_{32}(\bar{k}_0)} n_3(\bar{k}_0), & & \underset{k = \omega \bar{k}_0}{\res} n_1(k) = \omega d_{k_0} e^{\theta_{32}(\bar{k}_0)} n_2( \omega \bar{k}_0),
	\\ \nonumber
& \underset{k = \omega^2 \bar{k}_0}{\res} n_3(k) = \omega^2 d_{k_0} e^{\theta_{32}(\bar{k}_0)} n_1(\omega^2 \bar{k}_0), & & \underset{k = \bar{k}_0^{-1}}{\res} n_1(k) = - \bar{k}_0^{-2} d_{k_0} e^{\theta_{32}(\bar{k}_0)} n_3(\bar{k}_0^{-1}),	
	\\ \label{nresiduesk0}
&  \underset{k = \omega^2 \bar{k}_0^{-1}}{\res} n_2(k) = -\tfrac{\omega^2}{\bar{k}_0^{2}} d_{k_0} e^{\theta_{32}(\bar{k}_0)} n_1( \omega^2 \bar{k}_0^{-1}), & & \underset{k = \omega \bar{k}_0^{-1}}{\res} n_3(k) = -\tfrac{\omega}{\bar{k}_0^{2}} d_{k_0} e^{\theta_{32}(\bar{k}_0)} n_2(\omega \bar{k}_0^{-1}), 
\end{align}
and, for each $k_{0}\in \mathsf{Z}\cap \mathbb{R}$, 
\begin{align}\nonumber
& \underset{k = k_0}{\res} n_2(k) = c_{k_0} e^{-\theta_{21}(k_0)} n_1(k_0), & & \underset{k = \omega k_0}{\res} n_1(k) = \omega c_{k_0} e^{-\theta_{21}(k_0)} n_3(\omega k_0), 
	\\  \nonumber
& \underset{k = \omega^2 k_0}{\res} n_3(k) = \omega^2 c_{k_0} e^{-\theta_{21}(k_0)} n_2(\omega^2 k_0), & & \underset{k = k_0^{-1}}{\res} n_1(k) = - k_0^{-2} c_{k_0} e^{-\theta_{21}(k_0)} n_2(k_0^{-1}),		
	\\ \label{nresiduesk0real}
&  \underset{k = \omega^2k_0^{-1}}{\res} n_2(k) = -\tfrac{\omega^2}{k_0^{2}} c_{k_0} e^{-\theta_{21}(k_0)} n_3(\omega^2 k_0^{-1}), & & \underset{k = \omega k_0^{-1}}{\res} n_3(k) 
= -\tfrac{\omega}{k_0^{2}} c_{k_0} e^{-\theta_{21}(k_0)} n_1(\omega k_0^{-1}),
\end{align}
where the $(x,t)$-dependence of $n$ and of $\theta_{ij}$ has been suppressed for brevity and the complex constants $d_{k_0}$ are defined for $k_{0}\in \mathsf{Z}\setminus \mathbb{R}$ by
\begin{align}\label{dk0def}
d_{k_{0}} := \frac{\bar{k}_0^2-1}{\omega^2 (\omega^2 - \bar{k}_0^2)} \bar{c}_{k_0}.
\end{align}
\end{enumerate}
\end{RHproblem}

For simplicity, we will only deal with Schwartz class solutions in this paper. 

\begin{definition}\label{Schwartzsolutiondef}\upshape
Let $T \in (0, \infty]$. We say that $\{u(x,t), v(x,t)\}$ is a {\it Schwartz class solution of \eqref{boussinesqsystem} on $\R \times [0,T)$ with initial data $u_0, v_0 \in \mathcal{S}(\R)$} if
\begin{enumerate}[$(i)$] 
  \item $u,v$ are smooth real-valued functions of $(x,t) \in \R \times [0,T)$.

\item $u,v$ satisfy \eqref{boussinesqsystem} for $(x,t) \in \R \times [0,T)$ and 
$$u(x,0) = u_0(x), \quad v(x,0) = v_0(x), \qquad x \in \R.$$ 

  \item $u,v$ have rapid decay as $|x| \to \infty$ in the sense that, for each integer $N \geq 1$ and each $\tau \in [0,T)$,
$$\sup_{\substack{x \in \R \\ t \in [0, \tau]}} \sum_{i =0}^N (1+|x|)^N(|\partial_x^i u| + |\partial_x^i v| ) < \infty.$$
\end{enumerate} 
\end{definition}

\subsubsection{Statement of the theorem}

We showed in Theorem \ref{directth} that initial data in the Schwartz class satisfying Assumptions \ref{solitonassumption} and \ref{originassumption} give rise to scattering data with the properties $(\ref{Theorem2.3itemi})$--$(\ref{Theorem2.3itemvii})$ of Theorem \ref{directth}. Our next theorem establishes a converse of this result, namely it constructs a map from the set of scattering data satisfying $(\ref{Theorem2.3itemi})$--$(\ref{Theorem2.3itemvii})$ of Theorem \ref{directth} to the class of Schwartz class solutions $\{u(x,t), v(x,t)\}$ of the Boussinesq system \eqref{boussinesqsystem}.

\begin{theorem}[Inverse scattering]\label{inverseth}
Let $\{r_1(k), r_2(k), \mathsf{Z}, \{c_{k_0}\}_{k_0 \in \mathsf{Z}}\}$ be scattering data satisfying properties $(\ref{Theorem2.3itemi})$--$(\ref{Theorem2.3itemvii})$ of Theorem \ref{directth}.
Define $T \in (0, \infty]$ by 
\begin{align}\label{Tdef}
T := \sup \big\{t \geq 0 \, | \, \text{$e^{\frac{|k|^2t}{4}}r_1(1/k)$ and its derivatives are rapidly decreasing as $\Gamma_1 \ni k \to \infty$}\big\}.
\end{align}

Then RH problem \ref{RHn} has a unique solution $n(x,t,k)$ for each $(x,t) \in \R \times [0,T)$ and the function $n_{3}^{(1)}$ defined by
$$n_{3}^{(1)}(x,t) := \lim_{k\to \infty} k (n_{3}(x,t,k) -1)$$ 
is well-defined and smooth for $(x,t) \in \R \times [0,T)$. Moreover, $\{u(x,t), v(x,t)\}$ defined by
\begin{align}\label{recoveruvn}
\begin{cases}
u(x,t) = -i\sqrt{3}\frac{\partial}{\partial x}n_{3}^{(1)}(x,t),
	\\
v(x,t) = -i\sqrt{3}\frac{\partial}{\partial t}n_{3}^{(1)}(x,t),
\end{cases}
\end{align}
is a Schwartz class solution of (\ref{boussinesqsystem}) on $\R \times [0,T)$.
\end{theorem}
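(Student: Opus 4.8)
The plan is to establish Theorem \ref{inverseth} in three stages: (1) solvability and uniqueness of RH problem \ref{RHn}; (2) smoothness of the reconstructed potential $n_3^{(1)}$ in $(x,t)$; and (3) verification that the pair $\{u,v\}$ defined by \eqref{recoveruvn} actually solves \eqref{boussinesqsystem} with the prescribed initial data and Schwartz decay. Since the soliton-free counterpart of this theorem is the main result of \cite{CLmain}, the guiding principle throughout is to reduce to that case by removing the poles at $\hat{\mathsf{Z}}$ through explicit Blaschke-type/Darboux transformations, exploiting the $\mathbb{Z}_3$-symmetry group generated by $k \mapsto \omega k$ and $k \mapsto k^{-1}$ so that only the finitely many zeros $k_0 \in \mathsf{Z}$ in $D_2$ must be treated directly.

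\emph{Step 1: Solvability and uniqueness.} First I would recast RH problem \ref{RHn} as a singular integral equation. The standard obstruction — non-uniqueness coming from a nontrivial homogeneous ("vanishing lemma") solution — is handled as in \cite{CLmain} by using the determinant relation together with the symmetries \eqref{nsymm}: any solution of the homogeneous problem, after symmetrization, gives a scalar function analytic off $\Gamma$ whose jump and residue structure force it to vanish, using the sign/positivity conditions \eqref{r1r2at0}, \eqref{ck0positivitycondition} and $s_{22}^A(k_0)\neq 0$ to control the residues at real and non-real $k_0$ respectively. For existence, I would first solve the \emph{regular} RH problem (poles removed) on $\R\times[0,T)$ — this is exactly the content of the analysis in \cite{CLmain}, where the definition \eqref{Tdef} of $T$ guarantees that the jump matrix $v(x,t,\cdot)$ is well-defined with the necessary decay on $\Gamma_1$ (and hence, via the symmetry and relations \eqref{r1r2 relation on the unit circle}, on all of $\Gamma$) — and then dress the regular solution by a rational (in $k$) matrix factor built from the residue data $\{c_{k_0}\}$ and $\{d_{k_0}\}$, chosen precisely to reproduce the residue conditions \eqref{nresiduesk0}, \eqref{nresiduesk0real} while preserving analyticity, the normalization \eqref{RHniteme}, and the symmetries \eqref{nsymm}. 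Uniqueness of $n$ then follows from uniqueness of the regular solution plus uniqueness of the dressing.

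\emph{Step 2: Smoothness and extraction of $u,v$.} Once $n(x,t,k)$ is constructed, its smooth dependence on $(x,t)$ follows from the smooth dependence of the jump matrix and the residue conditions on $(x,t)$ (the $\theta_{ij}$ are smooth) combined with the uniform invertibility of the associated integral operator on compact $(x,t)$-sets; this is a routine perturbation argument as in \cite{CLmain}. The expansion $n(x,t,k) = (1,1,1) + n^{(1)}(x,t)k^{-1} + O(k^{-2})$ at $k=\infty$ then defines $n_3^{(1)}$, and differentiating under the integral sign gives smoothness of $u = -i\sqrt3\,\partial_x n_3^{(1)}$ and $v = -i\sqrt3\,\partial_t n_3^{(1)}$. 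The Schwartz decay in $x$ is obtained as in \cite{CLmain} by a stationary-phase / uniform-bound analysis of the $x$-dependence of the RH problem; the soliton terms contribute only exponentially localized (hence Schwartz) corrections, so they do not affect the decay estimates.

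\emph{Step 3: $\{u,v\}$ solves the Boussinesq system with correct initial data.} The standard dressing-method argument applies: one shows that $\Psi := n(x,t,k)$, suitably normalized, satisfies the Lax pair equations $\Psi_x = (\text{const} \cdot \mathcal{L}(k) + \mathsf{U}(x,t,k))\Psi$-type relations whose compatibility is exactly \eqref{boussinesqsystem}, reading off $u,v$ from the $O(k^{-1})$ coefficient. Concretely I would (i) show the jump matrix $v$ has the factorized triangular structure making $n_x n^{-1}$ and $n_t n^{-1}$ (interpreted in the vector/symmetrized sense) have no jump across $\Gamma$ and only removable singularities at $\hat{\mathsf{Z}}$ (the residue conditions are designed so the apparent poles cancel), hence are rational in $k$ with controlled behavior at $\infty$ and at $k=0$ (using property \eqref{RHnitemd} and the structure of $\mathcal{L}$), therefore explicitly computable; (ii) match the $k\to\infty$ asymptotics to identify these rational matrices with the Lax-pair coefficients expressed through $n_3^{(1)}$; (iii) deduce the Boussinesq system by compatibility. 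That $u(x,0)=u_0$, $v(x,0)=v_0$ follows because at $t=0$ the RH problem coincides with the one produced by the direct transform of $u_0,v_0$ in Theorem \ref{directth}, whose solution reconstructs exactly that data. I expect \textbf{Step 3(i)} — verifying that the residue conditions at all twelve symmetric copies of each $k_0$ conspire to remove the singularities of $n_x n^{-1}$ and $n_t n^{-1}$ — to be the main technical obstacle, since it requires carefully propagating the single defining relation \eqref{ck0def} for $c_{k_0}$ through the symmetry group while tracking the behavior near $\Gamma_\star$ and on the unit circle; the bookkeeping here is where the "several complications" alluded to in the introduction are concentrated.
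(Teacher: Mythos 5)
Your Steps 2--3 and your uniqueness sketch track the paper (those parts are essentially carried over from the solitonless analysis in \cite{CLmain}), but the existence argument in Step 1 has a genuine gap. The paper does not dress the regular (pole-free) solution: it converts the residue conditions into jump conditions on small circles around all twelve symmetric copies of each $k_0$ (the $n \to \tilde{n}$ transformation of Section \ref{subsec:RHP ntilde}, with $Q_1$, $Q_7$, $P_1$ chosen so that the extended jump respects the $R$-symmetry of Lemma \ref{vsymmlemma}), and then proves a vanishing lemma for the \emph{full} problem (Lemma \ref{vanishinglemma}), which together with the factorization of Lemma \ref{vpmlemma} and Fredholm theory yields existence and uniqueness simultaneously. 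In your route, the rational dressing factor is not determined by the scattering data alone: reproducing the residue conditions \eqref{nresiduesk0}--\eqref{nresiduesk0real} forces a linear algebraic system coupling the unknown factor to the values of the solution at the orbit of $k_0$, and you must prove that this system is uniquely solvable, with the relevant determinant nonvanishing, for \emph{every} $(x,t) \in \R \times [0,T)$. That non-degeneracy is precisely where the soliton-specific hypotheses enter: for $k_0 \in \mathsf{Z} \cap \R$ it requires the positivity condition \eqref{ck0positivitycondition} (otherwise the one-soliton is singular, Lemma \ref{onesolitonsingularlemma}), and for $k_0 \in \mathsf{Z} \setminus \R$ it requires $k_0 \in D_{\mathrm{reg}}$ (for $k_0 \in D_{\mathrm{sing}}$ the determinant $\det(I-B)$ vanishes for some $x$ at every $t$; Lemma \ref{breatherlemma} and Section \ref{Dsingsubsec}). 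Your proposal never confronts this determinant; as written, the same formal dressing would ``prove'' existence for zeros in $D_{\mathrm{sing}}$ or for real zeros with $i(\omega^{2}k_{0}^{2}-\omega)c_{k_{0}}<0$, where the conclusion is false. Relatedly, you locate the main difficulty at Step 3(i), but that Lax-pair bookkeeping is essentially unchanged from \cite{CLmain}; the soliton-specific difficulty is the solvability just described, which in the paper is absorbed into the positivity computations on $\partial\mathcal{D}$ inside the proof of Lemma \ref{vanishinglemma}.

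There is also an algebraic problem with the dressing as stated. If $\hat{n}$ solves the regular row-vector problem with jump $v$ and you set $n=\hat{n}\,\mathsf{D}$ with $\mathsf{D}(k)$ rational and analytic near $\Gamma$, then $n_+ = n_-\,\mathsf{D}^{-1}v\,\mathsf{D}$, so the jump is conjugated rather than preserved; since the jump \eqref{vdef} on $\partial\D$ has generically full matrix structure, a nontrivial rational $\mathsf{D}$ will not commute with it, and the dressed function no longer solves RH problem \ref{RHn}. One must either conjugate the regular problem accordingly (which changes the object whose solvability you are quoting from \cite{CLmain}) or replace poles by jumps on small circles, which is exactly the paper's $\tilde{n}$-formulation. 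Finally, splitting uniqueness as ``uniqueness of the regular solution plus uniqueness of the dressing'' is circular without an independent argument, because uniqueness of the dressing is again the nonvanishing of the same determinant; the paper avoids this by running the vanishing lemma for the problem with the circle jumps included.
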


The proof of Theorem \ref{inverseth} is given in Section \ref{inversesec}.

\subsection{Solution of the initial value problem for (\ref{badboussinesq})}
Recalling the relation between (\ref{badboussinesq}) and the system (\ref{boussinesqsystem}), and using the solutions of both the direct and inverse problems, we can solve the initial value problem for the Boussinesq equation (\ref{badboussinesq}).

\begin{definition}\upshape
Let $T \in (0, \infty]$. We say that $u(x,t)$ is a {\it Schwartz class solution of \eqref{badboussinesq} on $\R \times [0,T)$ with initial data $u_0, u_1 \in \mathcal{S}(\R)$} if
\begin{enumerate}[$(i)$] 
  \item $u$ is a smooth real-valued functions of $(x,t) \in \R \times [0,T)$.

\item $u$ satisfies \eqref{badboussinesq} for $(x,t) \in \R \times [0,T)$ and 
$$u(x,0) = u_0(x), \quad u_t(x,0) = u_1(x), \qquad x \in \R.$$ 

  \item $u$ has rapid decay as $|x| \to \infty$ in the sense that, for each integer $N \geq 1$ and each $\tau \in [0,T)$,
$$\sup_{\substack{x \in \R \\ t \in [0, \tau]}} \sum_{i =0}^N (1+|x|)^N |\partial_x^i u(x,t)|  < \infty.$$
\end{enumerate} 
\end{definition}

\begin{theorem}[Solution of (\ref{badboussinesq}) via inverse scattering]\label{IVPth}
Let $u_0,u_1 \in \mathcal{S}(\R)$ be real-valued and suppose that
\begin{align}\label{u1zeromean}
\int_\R u_1(x) dx = 0.
\end{align}
Let $v_0(x) = \int_{-\infty}^x u_1(x')dx'$ and suppose $u_0, v_0$ are such that Assumptions \ref{solitonassumption} and \ref{originassumption} hold. Define the spectral functions $r_j(k)$, $j = 1,2$, in terms of $u_0, v_0$ by (\ref{r1r2def}). 
Define $T \in (0,\infty]$ by (\ref{Tdef}).
Then the initial value problem for (\ref{badboussinesq}) with initial data $u_0, u_1$ has a unique Schwartz class solution $u(x,t)$ on $\R \times [0,T)$. Moreover, $u(x,t)$  can be recovered from the solution $n(x,t,k)$ of RH problem \ref{RHn} via the representation formula (\ref{recoveruvn}) for any $(x,t) \in \R \times [0,T)$.
\end{theorem}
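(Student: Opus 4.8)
\textbf{Proof proposal for Theorem \ref{IVPth}.}

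The plan is to combine Theorems \ref{directth} and \ref{inverseth} with the observation that the system \eqref{boussinesqsystem} is equivalent to \eqref{badboussinesq} under the stated mass-conservation hypothesis \eqref{u1zeromean}. First I would run the direct scattering map: by Theorem \ref{directth}, since $u_0, v_0 \in \mathcal{S}(\R)$ are real-valued and Assumptions \ref{solitonassumption} and \ref{originassumption} hold, the scattering data $\{r_1, r_2, \mathsf{Z}, \{c_{k_0}\}_{k_0 \in \mathsf{Z}}\}$ defined by \eqref{r1r2def} and \eqref{ck0def} are well-defined and enjoy properties $(\ref{Theorem2.3itemi})$--$(\ref{Theorem2.3itemvii})$. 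These are precisely the hypotheses required by Theorem \ref{inverseth}, so RH problem \ref{RHn} has a unique solution $n(x,t,k)$ for each $(x,t) \in \R \times [0,T)$ with $T$ given by \eqref{Tdef}, the quantity $n_3^{(1)}(x,t)$ is well-defined and smooth, and $\{u(x,t), v(x,t)\}$ defined by \eqref{recoveruvn} is a Schwartz class solution of \eqref{boussinesqsystem} on $\R \times [0,T)$ with initial data $u_0, v_0$.

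Next I would translate this back to \eqref{badboussinesq}. Since $v_0(x) = \int_{-\infty}^x u_1(x')dx'$ and $\int_\R u_1 dx = 0$, the function $v_0$ lies in $\mathcal{S}(\R)$ (the zero-mean condition guarantees decay at $+\infty$), so Theorem \ref{directth} indeed applies. For the solution $\{u,v\}$ produced above, differentiating the second equation of \eqref{boussinesqsystem} in $x$ and the first in $t$ and eliminating $v$ shows that $u$ satisfies $u_{tt} = (v_x)_t = (v_t)_x = u_{xx} + (u^2)_{xx} + u_{xxxx}$, i.e. \eqref{badboussinesq}, on $\R \times [0,T)$. The initial condition $u(x,0) = u_0(x)$ is immediate, and $u_t(x,0) = v_x(x,0) = v_{0x}(x) = u_1(x)$, so $u$ is a Schwartz class solution of \eqref{badboussinesq} with data $u_0, u_1$. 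The representation formula is then just the first line of \eqref{recoveruvn}.

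For uniqueness, suppose $\tilde u$ is any Schwartz class solution of \eqref{badboussinesq} on $\R \times [0,T)$ with data $u_0, u_1$. Set $\tilde v(x,t) := v_0(x) + \int_0^t \big(\tilde u_x + (\tilde u^2)_x + \tilde u_{xxx}\big)(x,s)\,ds$; then $\{\tilde u, \tilde v\}$ is a Schwartz class solution of \eqref{boussinesqsystem} with data $u_0, v_0$ (one checks $\tilde v_t = \tilde u_x + (\tilde u^2)_x + \tilde u_{xxx}$ by construction, and $\tilde u_t = \tilde v_x$ follows from $\partial_x \tilde v_t = \tilde u_{tt} = \partial_t \tilde u_x$ together with matching at $t=0$; the zero-mean condition propagates in time because $\partial_t \int_\R \tilde u\, dx = \int_\R \tilde v_x\, dx = 0$, keeping $\tilde v$ Schwartz). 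Feeding $\{\tilde u, \tilde v\}$ through the time-dependent direct scattering construction of Section \ref{directsec} (the $(x,t)$-dependent analogue of Theorem \ref{directth}, as in \cite{CLmain}) produces scattering data whose $t$-evolution is governed by the explicit linear flow encoded in $\theta_{ij}$, hence $\tilde u$ satisfies the same RH problem \ref{RHn}; by the uniqueness part of Theorem \ref{inverseth}, $\{\tilde u, \tilde v\} = \{u, v\}$, so in particular $\tilde u = u$.

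The main obstacle is the uniqueness argument: one must be sure that an \emph{arbitrary} Schwartz class solution of \eqref{badboussinesq} (not just the one manufactured by inverse scattering) has scattering data evolving by the prescribed linear law, which requires the simultaneous spectral analysis of the $x$- and $t$-parts of the Lax pair and the verification that the residue constants $c_{k_0}$ carry the expected $t$-dependence. This is the point where the restricted domains of definition of the eigenfunctions (emphasized in the introduction) make the argument delicate; however, since these facts are established in Sections \ref{directsec}--\ref{inversesec} in parallel with \cite{CLmain}, the proof here reduces to assembling them.
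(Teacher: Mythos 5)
Your high-level assembly (direct map via Theorem \ref{directth}, inverse map via Theorem \ref{inverseth}, equivalence of \eqref{badboussinesq} and \eqref{boussinesqsystem} under \eqref{u1zeromean}, and uniqueness by pushing an arbitrary solution through the direct construction and invoking RH uniqueness) is the intended skeleton, but there is a genuine gap at the clause you pass over in one breath: Theorem \ref{inverseth} does \emph{not} assert that the solution $\{u,v\}$ it produces has initial data $u_0,v_0$; it only asserts that the given scattering data generate \emph{some} Schwartz class solution of \eqref{boussinesqsystem} on $\R\times[0,T)$. Proving $u(x,0)=u_0(x)$ and $v(x,0)=v_0(x)$ --- i.e.\ that the inverse scattering map at $t=0$ inverts the direct map --- is precisely the key step of the paper's proof, and it cannot be extracted from your uniqueness argument either, because that argument presupposes the existence of a Schwartz class solution with data $u_0,u_1$; since \eqref{badboussinesq} is ill-posed there is no off-the-shelf local existence theory to supply one, and existence is exactly what is at stake. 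To close the gap one shows that the row vector $n(x,0,k)=(1,1,1)M(x,0,k)$ built from the initial data alone (the $t=0$ construction of Section \ref{directsec}, packaged in Proposition \ref{prop:construction of n}) solves RH problem \ref{RHn} at $t=0$ with the scattering data of $u_0,v_0$; by uniqueness of the RH solution it coincides with the $n(x,0,k)$ of Theorem \ref{inverseth}, and the recovery formulas then return $u(x,0)=u_0$ and, with extra care because the formula for $v$ in \eqref{recoveruvn} involves a $t$-derivative (compare the $x$-derivative formula \eqref{uvdef}), $v(x,0)=v_0$. This is the content of the paper's appeal to Proposition \ref{prop:construction of n} and to the argument of \cite[Theorem 2.8]{CLmain}.

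A smaller point: what you call ``the uniqueness part of Theorem \ref{inverseth}'' is uniqueness of the solution of RH problem \ref{RHn}, not uniqueness of solutions of \eqref{boussinesqsystem}. Your uniqueness argument becomes correct once phrased as: any Schwartz class solution $\{\tilde u,\tilde v\}$ of \eqref{boussinesqsystem} with data $u_0,v_0$ yields, via Proposition \ref{prop:construction of n} (which already encodes the time evolution of the scattering data and the handling of the restricted domains of definition you worry about), a solution $(1,1,1)M$ of the \emph{same} RH problem \ref{RHn} satisfying the recovery formula, hence equal to the unique RH solution $n$, so $\tilde u=u$; your reduction of \eqref{badboussinesq} to \eqref{boussinesqsystem} for $\tilde u$, including the propagation of the zero-mean condition, is fine.
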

\begin{proof}
The key step is to show that the inverse scattering map of Theorem \ref{inverseth} at $t = 0$ is the inverse of the direct scattering map of Theorem \ref{directth}. The proof of this fact is based on Proposition \ref{prop:construction of n} and is analogous to the proof of \cite[Theorem 2.8]{CLmain}. 
\end{proof}

\subsection{Blow-up}
It follows from Theorem \ref{inverseth} that the solution $u(x,t)$ of (\ref{boussinesqsystem}) exists at least as long as $t < T$ where $T$ is defined by (\ref{Tdef}). Our next theorem, whose proof is identical to the proof of \cite[Theorem 2.9]{CLmain}, reveals that the solution in fact ceases to exist at $t=T$ if $T < \infty$.

\begin{theorem}[Blow-up]\label{blowupth}
Under the assumptions of Theorem \ref{inverseth}, if $T$ defined by (\ref{Tdef}) satisfies $T < \infty$, then the Schwartz class solution $\{u(x,t), v(x,t)\}$ defined in (\ref{recoveruvn}) blows up at time $T$.
\end{theorem}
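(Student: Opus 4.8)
The plan is to argue by contradiction, exploiting the defining property of $T$ in \eqref{Tdef} together with the reconstruction formula \eqref{recoveruvn}. Suppose $T < \infty$ but the solution $\{u(x,t), v(x,t)\}$ does not blow up at $t = T$; concretely, suppose it extends to a Schwartz class solution on $\R \times [0, T']$ for some $T' > T$. The idea is that a Schwartz class solution on a time interval containing $T$ necessarily generates, via the direct scattering map of Theorem \ref{directth} applied at each fixed time, spectral data with good decay properties that persist past $T$, contradicting the maximality built into the definition of $T$.

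First I would recall how $T$ is characterized: by \eqref{Tdef}, $T$ is the supremum of times $t$ such that $e^{|k|^2 t/4} r_1(1/k)$ and all its $k$-derivatives are rapidly decreasing along $\Gamma_1$ as $k \to \infty$. The factor $e^{|k|^2 t/4}$ comes from the large-$k$ behavior of the phase $\theta_{21}$ (or $\theta_{31}$) evaluated on the relevant contour — specifically from the $t$-dependent term $z_i(k) - z_j(k)$ in \eqref{def of Phi ij}, which grows like $|k|^2$. Thus $T$ is precisely the time up to which the jump matrix $v(x,t,k)$ in \eqref{vdef} remains well-controlled at infinity, which is what makes RH problem \ref{RHn} solvable. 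The second step is to observe, following the proof of \cite[Theorem 2.8]{CLmain} as invoked in the proof of Theorem \ref{IVPth}, that the direct scattering map at time $\tau$ applied to the (assumed) solution at time $\tau$ produces scattering data whose $r_1$-component equals $e^{(z_2 - z_1)(k)\tau} r_1(k)\big|_{\text{at time }0}$ up to the relevant reparametrization — i.e. the time evolution of the spectral data is linear and explicit.

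The third step is the contradiction itself. If the solution exists and is Schwartz on $[0, T']$ with $T' > T$, then by Theorem \ref{directth} (direct scattering) applied at time $t = 0$, the associated $r_1$ has property $(\ref{Theorem2.3itemiii})$ of Theorem \ref{directth}, namely it is rapidly decreasing along $\Gamma_1$ — but moreover, re-running the direct transform at any fixed $\tau \in [0, T']$ and tracking the explicit $\tau$-dependence of the scattering data shows that $e^{|k|^2 \tau/4} r_1(1/k)$ is still rapidly decreasing for $\tau$ up to $T'$. This is exactly the condition defining membership in the set over which the supremum in \eqref{Tdef} is taken, so we would conclude $T \geq T' > T$, a contradiction. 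Hence the solution must fail to exist — i.e. blow up — at $t = T$.

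The main obstacle, and the place where care is genuinely needed, is the second step: making rigorous the claim that a Schwartz class solution on $[0,T']$ really does generate spectral data with the claimed decay for all $\tau < T'$. This requires knowing that the scattering map (direct transform) is well-defined and yields Schwartz-decaying $r_1$ for \emph{each} time slice $u(\cdot, \tau), v(\cdot, \tau)$ — which follows from Theorem \ref{directth} provided Assumptions \ref{solitonassumption} and \ref{originassumption} continue to hold along the flow — and that the time dependence of $r_1$ is the explicit exponential dictated by the Lax pair, so that the $e^{|k|^2 \tau/4}$ growth factor is exactly absorbed. Since the paper states this theorem's proof is identical to that of \cite[Theorem 2.9]{CLmain}, the resolution is to cite that argument verbatim; the presence of solitons does not affect this reasoning because the discrete data $\mathsf{Z}$ and $\{c_{k_0}\}$ enter only through the residue conditions \eqref{nresiduesk0}--\eqref{nresiduesk0real}, not through the definition of $T$, which depends solely on the continuous datum $r_1$.
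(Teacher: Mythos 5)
Your overall strategy is the intended one: the paper gives no independent argument here but defers to \cite[Theorem 2.9]{CLmain}, and that argument is precisely the contradiction you describe — if a Schwartz class extension existed on $[0,T')$ with $T'>T$, then applying the direct transform to the time slices and using the explicit evolution of the reflection coefficient (Proposition \ref{reflectionprop}, $r_1(k;t)=r_1(k)e^{-\theta_{21}(0,t,k)}$, with the identification of the $t=0$ data via Proposition \ref{prop:construction of n}) would show that the condition in (\ref{Tdef}) holds for times beyond $T$, contradicting the supremum. You are also right that the solitons play no role: $s_{11}(k;t)=s_{11}(k)$, so $\mathsf{Z}$ and the residue constants do not enter the definition of $T$.

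There is, however, one step whose justification as written does not work. The condition in (\ref{Tdef}) involves $r_1(1/k)$ as $\Gamma_1\ni k\to\infty$, i.e.\ the behaviour of $r_1$ near $k=0$ along the segment $[0,i]$, not the decay of $r_1$ at infinity; so property $(\ref{Theorem2.3itemiii})$ of Theorem \ref{directth}, which you invoke, is not the operative input. For a Schwartz time slice the reflection coefficient $r_1(\cdot;\tau')$ is only bounded and smooth near the origin (properties $(\ref{Theorem2.3itemi})$--$(\ref{Theorem2.3itemii})$), and since on the unbounded ray of $\Gamma_1$ one has $e^{-\theta_{21}(0,\tau',1/k)}=e^{\frac{\tau'}{4}(|k|^2-|k|^{-2})}$, boundedness of $r_1(1/k;\tau')$ gives only boundedness, not rapid decay, of $e^{|k|^2\tau'/4}r_1(1/k)$. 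The missing (small but essential) idea is to give yourself room: for $\tau<\tau'<T'$ write $e^{|k|^2\tau/4}r_1(1/k)=e^{-(\tau'-\tau)|k|^2/4}\,e^{\frac{\tau'}{4|k|^2}}\,r_1(1/k;\tau')$, so the leftover Gaussian factor supplies the rapid decay (and beats the polynomial factors arising when differentiating). This yields $T\geq\tau$ for every $\tau<T'$, hence $T\geq T'>T$, the desired contradiction. (Also note the sign in your step two: the evolution is $r_1(k;t)=r_1(k)e^{-(z_2(k)-z_1(k))t}$, and it is only after evaluating at $1/k$ on the unbounded part of $\Gamma_1$ that one gets the growth matching $e^{|k|^2 t/4}$.) With these corrections your argument coincides with the cited proof.
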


More precisely, Theorem \ref{blowupth} states that there exists no Schwartz class solution $\{\tilde{u}, \tilde{v}\}$ of (\ref{boussinesqsystem}) on $[0,\tilde{T})$ with $\tilde{T} > T$ which coincides with $\{u, v\}$ for $t \in [0,T)$.

\subsection{Global solutions}
Global solutions of (\ref{badboussinesq}) are obtained by assuming the following. 

\begin{assumption}\label{nounstablemodesassumption}
The function $r_{1}:\hat{\Gamma}_{1}\setminus \hat{\mathcal{Q}}\to \mathbb{C}$ satisfies $r_{1}(k) = 0$ for all $k \in [0,i]$, where $[0,i]$ denotes the vertical segment from $0$ to $i$.
\end{assumption}

Assumption \ref{nounstablemodesassumption} implies that $T = \infty$ in (\ref{Tdef}) and therefore that the solution exists globally. The following theorem follows directly from Theorem \ref{IVPth}.

\begin{theorem}[Global solutions]\label{globalth}
Let $u_0,u_1 \in \mathcal{S}(\R)$ be such that the assumptions of Theorem \ref{IVPth} are fulfilled. Assume also that Assumption \ref{nounstablemodesassumption} holds. 
Then the initial value problem for (\ref{badboussinesq}) with initial data $u_0, u_1$ has a unique global Schwartz class solution $u(x,t)$. Moreover, $u(x,t)$ can be recovered from the solution $n(x,t,k)$ of RH problem \ref{RHn} via the representation formula (\ref{recoveruvn}) for any $(x,t) \in \R \times [0,\infty)$.
\end{theorem}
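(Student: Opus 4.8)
\textbf{Proof strategy for Theorem \ref{globalth}.}

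The plan is to verify that the hypotheses of Theorem \ref{IVPth} are all satisfied under the stated assumptions, and then to check that Assumption \ref{nounstablemodesassumption} forces $T = \infty$ in \eqref{Tdef}, so that the local-in-time solution produced by Theorem \ref{IVPth} is in fact global. The theorem then follows immediately by invoking Theorem \ref{IVPth} with this value of $T$.

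First I would observe that by hypothesis $u_0, u_1 \in \mathcal{S}(\R)$ are real-valued and satisfy all the requirements in Theorem \ref{IVPth}: namely the zero-mean condition \eqref{u1zeromean}, and (with $v_0(x) = \int_{-\infty}^x u_1(x')dx'$) Assumptions \ref{solitonassumption} and \ref{originassumption}. Consequently Theorem \ref{IVPth} applies verbatim and yields a unique Schwartz class solution $u(x,t)$ of \eqref{badboussinesq} on $\R \times [0,T)$, recoverable from $n(x,t,k)$ via \eqref{recoveruvn}, where $T$ is given by \eqref{Tdef}.

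The heart of the argument is therefore to show $T = \infty$. By Assumption \ref{nounstablemodesassumption}, $r_1(k) = 0$ for all $k$ on the segment $[0,i]$. Under the change of variables $k \mapsto 1/k$, the ray $\Gamma_1$ (emanating from $0$ along the positive imaginary direction up to $i$ and then the relevant portion) maps so that the points $1/k$ with $k \in \Gamma_1$ near infinity correspond precisely to $k$-values approaching $0$ along $[0,i]$; more precisely, as $\Gamma_1 \ni k \to \infty$, the argument $1/k$ ranges over a subset of $[0,i]$, on which $r_1$ vanishes identically. Hence $r_1(1/k) \equiv 0$ for $k \in \Gamma_1$ in a neighborhood of infinity, and the same holds for all its $k$-derivatives. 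Therefore $e^{\frac{|k|^2 t}{4}} r_1(1/k)$ and all its derivatives vanish identically near $k = \infty$ on $\Gamma_1$, for \emph{every} $t \geq 0$; in particular they are trivially rapidly decreasing. The supremum in \eqref{Tdef} is thus over all $t \geq 0$, giving $T = \infty$.

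With $T = \infty$ established, the conclusion of Theorem \ref{IVPth} is precisely the assertion of Theorem \ref{globalth}: the initial value problem has a unique global Schwartz class solution $u(x,t)$ on $\R \times [0,\infty)$, recovered from the solution $n(x,t,k)$ of RH problem \ref{RHn} via \eqref{recoveruvn}. The only step requiring any care is the verification that Assumption \ref{nounstablemodesassumption} indeed kills the exponential growth factor $e^{|k|^2 t/4}$ in \eqref{Tdef}, i.e. the observation that the vanishing of $r_1$ on $[0,i]$ translates, after inversion $k \mapsto 1/k$, into the vanishing of $r_1(1/k)$ for large $k$ along $\Gamma_1$; this is elementary once one tracks how $\Gamma_1$ behaves under the inversion map, but it is the one place where the geometry of the contour enters.

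\begin{proof}
By hypothesis, $u_0, u_1 \in \mathcal{S}(\R)$ are real-valued and satisfy the zero-mean condition \eqref{u1zeromean}; moreover, with $v_0(x) = \int_{-\infty}^x u_1(x')\,dx'$, Assumptions \ref{solitonassumption} and \ref{originassumption} hold. Thus all hypotheses of Theorem \ref{IVPth} are fulfilled, and that theorem produces a unique Schwartz class solution $u(x,t)$ of \eqref{badboussinesq} on $\R \times [0,T)$, recoverable via \eqref{recoveruvn}, with $T$ defined by \eqref{Tdef}.

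It remains to check that $T = \infty$. By Assumption \ref{nounstablemodesassumption}, $r_1(k) = 0$ for every $k \in [0,i]$. Since $\Gamma_1$ emanates from $0$ along the positive imaginary axis (see Figure \ref{fig: Dn}), for $k \in \Gamma_1$ sufficiently large we have $1/k \in [0,i]$, so $r_1(1/k) = 0$ there, and hence $\partial_k^j\big(r_1(1/k)\big) = 0$ as well for every $j \geq 0$ in that neighborhood of infinity. Consequently, for every $t \geq 0$, the function $e^{\frac{|k|^2 t}{4}} r_1(1/k)$ and all of its derivatives vanish identically for large $k$ along $\Gamma_1$, and are therefore (trivially) rapidly decreasing as $\Gamma_1 \ni k \to \infty$. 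Taking the supremum in \eqref{Tdef} over all $t \geq 0$ gives $T = \infty$.

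Applying Theorem \ref{IVPth} with $T = \infty$ now yields that the initial value problem for \eqref{badboussinesq} with initial data $u_0, u_1$ has a unique global Schwartz class solution $u(x,t)$ on $\R \times [0,\infty)$, recoverable from the solution $n(x,t,k)$ of RH problem \ref{RHn} via \eqref{recoveruvn} for any $(x,t) \in \R \times [0,\infty)$.
\end{proof}
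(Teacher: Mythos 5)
Your proof is correct and takes essentially the same route as the paper, which simply observes that Assumption \ref{nounstablemodesassumption} forces $T=\infty$ in \eqref{Tdef} and then invokes Theorem \ref{IVPth}. One small geometric remark: the unbounded part of $\Gamma_1$ is the ray along the \emph{negative} imaginary axis (from $-i$ to $-i\infty$), so for large $k\in\Gamma_1$ one has $1/k=i/|k|\in(0,i]$ where $r_1$ vanishes by Assumption \ref{nounstablemodesassumption}; your conclusion is right, but the phrase ``$\Gamma_1$ emanates from $0$ along the positive imaginary axis'' does not by itself justify it.
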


Finally, for completeness, we also state the following lemma, which will be used in \cite{CLmainSectorII} to obtain the long-time asymptotics of the solution in the presence of solitons. The proof is identical to \cite[Proof of Lemma 2.13]{CLmain}.

\begin{lemma}[Inequalities satisfied by the spectral functions]\label{inequalitieslemma}
Suppose $u_0,v_0 \in \mathcal{S}(\R)$ are such that Assumptions \ref{solitonassumption} and \ref{originassumption} hold. Let $\{r_j\}_1^2$ be the associated reflection coefficients defined in (\ref{r1r2def}).
\begin{enumerate}[$(i)$]
\item\label{inequalitieslemmaitemi}
 The function $f:\partial \mathbb{D}\to \mathbb{C}$ defined in (\ref{def of f}) satisfies 
\begin{itemize}
\item[$(a)$] $f(k) \geq 0$ for all $k \in \partial \mathbb{D}$,

\item[$(b)$] $f(k)=0$ if and only if $k \in \{\pm 1, \pm \omega\}$, and 

\item[$(c)$] $f(k) \leq 1$ for all $k \in \partial \mathbb{D}$ with $\arg k \in (2\pi/3, \pi) \cup (5\pi/3, 2\pi)$.
\end{itemize}

\item\label{inequalitieslemmaitemii} $1+r_{1}(k)r_{2}(k) > 0$ for all $k \in \partial \mathbb{D}$ with $\arg k \in (\pi/3, \pi) \cup (4\pi/3, 2\pi)$.

\item\label{inequalitieslemmaitemiii} $-\frac{1}{2\pi}\ln(1+r_{1}(k)r_{2}(k)) \geq 0$ for all $k \in \partial \mathbb{D}$ with $\arg k \in (5\pi/3, 2\pi)$.

\item\label{inequalitieslemmaitemiv} The functions $\hat{\nu}_1, \hat{\nu}_{2}:\partial \mathbb{D}\to \mathbb{C}$ defined by
\begin{align}\label{hatnu12def}
\hat{\nu}_1(k) := \nu_3(k) - \nu_1(k), \qquad \hat{\nu}_{2}(k) = \nu_{2}(k) +\nu_3(k) -\nu_{4}(k),
\end{align}
where
\begin{align}\nonumber
& \nu_1(k) := - \frac{1}{2\pi}\ln(1+r_{1}(\omega k)r_{2}(\omega k)), 
&& \nu_2(k) := - \frac{1}{2\pi}\ln(1+r_{1}(\omega^{2} k)r_{2}(\omega^{2} k)), 
	\\ \label{nu12345def}
& \nu_3(k) := - \frac{1}{2\pi}\ln(f(\omega k)),  &&
\nu_4(k) := - \frac{1}{2\pi}\ln(f(\omega^{2} k)).
\end{align}
satisfy $\hat{\nu}_1(k) \geq 0$ for all $k \in \partial \mathbb{D}$ with $\arg k \in (5\pi/3, 2\pi)$, and $\hat{\nu}_{2}(k) \geq 0$ for all $k \in \partial \mathbb{D}$ with $\arg k \in (\pi, 4\pi/3)$.

\end{enumerate} 
\end{lemma}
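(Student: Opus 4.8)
The plan is to reduce Lemma \ref{inequalitieslemma} to the corresponding statement in the soliton-free setting, namely \cite[Lemma 2.13]{CLmain}, by observing that \emph{all} of the quantities appearing in the lemma ($f$, $1+r_1 r_2$, $\nu_1,\dots,\nu_4$, $\hat\nu_1,\hat\nu_2$) depend only on the reflection coefficients $r_1$ and $r_2$ restricted to the unit circle $\partial\mathbb D$, and never on the soliton data $\mathsf Z$ or $\{c_{k_0}\}$. Thus the first step is to check that the properties of $r_1|_{\partial\mathbb D}$ and $r_2|_{\partial\mathbb D}$ used in the proof of \cite[Lemma 2.13]{CLmain} continue to hold here. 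By Theorem \ref{directth}, under Assumptions \ref{solitonassumption} and \ref{originassumption} the functions $r_1, r_2$ enjoy the smoothness and boundedness in item $(\ref{Theorem2.3itemi})$--$(\ref{Theorem2.3itemii})$, the algebraic relations \eqref{r1r2 relation on the unit circle}--\eqref{r1r2 relation on the unit circle new} on $\partial\mathbb D$, the conjugation symmetry \eqref{r1r2 relation with kbar symmetry}, and the normalizations $r_1(\pm 1)=1$, $r_2(\pm 1)=-1$, $r_1(\kappa_j)\neq 0$; these are exactly the inputs needed, and they coincide with what holds in \cite{CLmain}. Since the soliton data enter the scattering picture only through the locations of the zeros of $s_{11}$ and the residue constants, and play no role in the boundary behavior of $r_1,r_2$ on $\partial\mathbb D$, the hypotheses of \cite[Lemma 2.13]{CLmain} are verified verbatim.

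Next I would spell out, for each item, how the conclusion follows. For $(\ref{inequalitieslemmaitemi})$: using \eqref{r1r2 relation with kbar symmetry} one rewrites $r_2(k) = \tilde r(k)\overline{r_1(\bar k^{-1})}$; on $\partial\mathbb D$ one has $\bar k^{-1} = k$, so $r_2(k) = \tilde r(k)\overline{r_1(k)}$, and moreover $|\tilde r(k)| = \left|\frac{\omega^2-k^2}{1-\omega^2 k^2}\right| = 1$ for $k\in\partial\mathbb D\setminus\{\pm\omega^{\pm 1}\}$ (direct check: $|1-\omega^2 k^2| = |k^2||\bar k^{-2}-\omega^2| = |\overline{\omega^2-k^2}|$ when $|k|=1$). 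Substituting into \eqref{def of f} and using the symmetry $r_1(\frac{1}{\omega^2 k})$ related to $r_1(\omega k)$ via the periodicity/symmetry relations, one expresses $f(k)$ as a manifestly real and nonnegative combination of $|r_1|^2$-type terms; the vanishing locus $\{\pm 1,\pm\omega\}$ and the upper bound $f\le 1$ on the two indicated arcs are then the same computation as in \cite{CLmain}. For $(\ref{inequalitieslemmaitemii})$: again $1+r_1(k)r_2(k) = 1+\tilde r(k)|r_1(k)|^2$ on $\partial\mathbb D$, and one checks that on the arcs $\arg k\in(\pi/3,\pi)\cup(4\pi/3,2\pi)$ the phase of $\tilde r(k)$ and the size of $|r_1(k)|\le$ (its bound) combine to keep the real part positive and the quantity away from $0$. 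Item $(\ref{inequalitieslemmaitemiii})$ is then immediate from $(\ref{inequalitieslemmaitemii})$ together with the estimate $1+r_1 r_2\le 1$ on the relevant arc — precisely the content needed so that the logarithm is nonpositive, hence $-\frac{1}{2\pi}\ln(1+r_1 r_2)\ge 0$. For $(\ref{inequalitieslemmaitemiv})$: expand $\hat\nu_1 = \nu_3-\nu_1 = -\frac{1}{2\pi}\ln\!\big(f(\omega k)/(1+r_1(\omega k)r_2(\omega k))\big)$ and $\hat\nu_2 = \nu_2+\nu_3-\nu_4 = -\frac{1}{2\pi}\ln\!\big(f(\omega k)(1+r_1(\omega^2 k)r_2(\omega^2 k))/f(\omega^2 k)\big)$, and apply parts $(\ref{inequalitieslemmaitemi})$ and $(\ref{inequalitieslemmaitemii})$ at the rotated arguments $\omega k$ and $\omega^2 k$ (noting that $\arg k\in(5\pi/3,2\pi)$ puts $\arg(\omega k)$ and $\arg(\omega^2 k)$ into exactly the arcs where the comparison $f\le 1+r_1 r_2$ and $f\le 1$ hold, and similarly for $\arg k\in(\pi,4\pi/3)$).

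Since the paper explicitly states that ``the proof is identical to \cite[Proof of Lemma 2.13]{CLmain},'' the honest and efficient proof is simply: verify that the only properties of $r_1,r_2$ invoked there are the boundary properties on $\partial\mathbb D$ guaranteed by Theorem \ref{directth}, note that the soliton data is irrelevant to those, and then invoke the computation of \cite{CLmain} unchanged. The main (and essentially only) obstacle is bookkeeping: one must make sure that none of the arguments in \cite{CLmain} secretly used the \emph{absence} of zeros of $s_{11}$ in a way that would now fail — e.g. that $r_1$ and $r_2$ are still well-defined and finite on all of $\partial\mathbb D\setminus\{\pm\omega\}$ despite the zeros of $s_{11}$. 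This is handled by Assumption \ref{solitonassumption}, which places $\mathsf Z$ in $D_{\mathrm{reg}}\cup(-1,0)\cup(1,\infty)$, away from $\partial\mathbb D$ (after using the symmetry $s_{11}(k)=s_{11}(\omega/k)$, the reflected zeros $\omega\mathsf Z^{-1}$ also avoid $\partial\mathbb D$), so $r_1|_{\partial\mathbb D}$ is indeed bounded as asserted in item $(\ref{Theorem2.3itemii})$ of Theorem \ref{directth}, and $r_2$ has only the prescribed simple poles at $\pm\omega^2$. With that caveat checked, no new analysis is needed and the lemma follows.
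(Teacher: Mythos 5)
Your governing strategy is the same as the paper's: the paper's proof of this lemma is literally the citation of \cite[Proof of Lemma 2.13]{CLmain}, and your reduction — observing that $f$, $1+r_1r_2$, the $\nu_j$ and $\hat\nu_j$ involve only $r_1,r_2$ on $\partial\D$, that the needed properties are exactly items $(\ref{Theorem2.3itemi})$--$(\ref{Theorem2.3itemv})$ of Theorem \ref{directth}, and that Assumption \ref{solitonassumption} keeps $\mathsf{Z}$ (and its images under $k\mapsto\omega/k$ etc.) off $\partial\D$ so that $r_1,r_2$ retain the same boundary behavior as in the solitonless case — is precisely the content of that one-line proof. So the proposal is acceptable as a whole. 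However, your supplementary sketch contains a concrete error: the claim $|\tilde r(k)|=1$ for $k\in\partial\D$ is false, and the ``direct check'' you give does not work (on $|k|=1$ one has $1-\omega^2k^2=k^2(\bar k^2-\omega^2)$, not $k^2(\bar k^{-2}-\omega^2)$). For instance $\tilde r(e^{i\pi/6})=\frac{\omega^2-e^{i\pi/3}}{1-\omega^2e^{i\pi/3}}=-2$. The fact that is actually relevant is that $\tilde r$ is \emph{real-valued} on $\partial\D$ (equivalently $(\omega k^2-1)(1-\omega^2k^2)=(\omega^2-k^2)(k^2-\omega)$ there, both sides equalling $-(1+k^2+k^4)$), so that $r_1(k)r_2(k)=\tilde r(k)|r_1(k)|^2$ is real on the circle; but since $\tilde r$ takes negative values of modulus larger than $1$ on parts of $\partial\D$, neither $f\geq 0$ nor $1+r_1r_2>0$ is ``manifest'' from \eqref{r1r2 relation with kbar symmetry} alone, and your assertion $1+r_1r_2\le 1$ on the arc in item $(\ref{inequalitieslemmaitemiii})$ is stated without justification (it does follow from $\tilde r\le 0$ there, which you never establish). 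These inequalities genuinely require the algebraic relation \eqref{r1r2 relation on the unit circle}, and that is exactly where the computation of \cite{CLmain} lives; since your declared fallback is to invoke that computation verbatim, the lemma follows, but the sketch in your second paragraph would not stand on its own as written.
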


\section{The direct problem}\label{directsec}
Given initial data $u_0, v_0$ such that Assumptions \ref{solitonassumption} and \ref{originassumption} hold, we will construct a solution $M(x,t,k)$ of a $3 \times 3$-matrix RH problem. Then we will obtain a solution $n(x,t,k)$ of RH problem \ref{RHn} by setting $n=(1,1,1)M$. Our main goals are to prove Theorem \ref{directth} and Proposition \ref{prop:construction of n}; the latter is a key ingredient in the proof of Theorem \ref{IVPth}. 

The construction of $M$ and $n$ was carried out in \cite[Subsection 3.10]{CLmain} in the solitonless case. In what follows, proofs that are only minimally affected by the presence of solitons are omitted. In particular, various properties of $X,Y,X^{A},Y^{A},s,s^{A}$ were established in \cite[Propositions 3.2--3.9]{CLmain} in the case when no solitons are present. These propositions from \cite{CLmain} hold also under Assumption \ref{solitonassumption}, i.e. in the presence of solitons, with the same proofs. To avoid repetition, we will not reproduce the statements of these propositions here.

\subsection{Lax pair}
The system (\ref{boussinesqsystem}) is the compatibility condition of the Lax pair equations
\begin{equation}\label{Xlax}
\begin{cases}
 X_{x} - [\mathcal{L},X] = \mathsf{U} X, \\
 X_{t} - [\mathcal{Z},X] = \mathsf{V} X,
\end{cases}
\end{equation}
where $\mathcal{L} := \mbox{diag}(l_{1},l_{2},l_{3})$, $\mathcal{Z} := \mbox{diag}(z_{1},z_{2},z_{3})$, 
\begin{align}\label{UVLZdef}
& \mathsf{U} := L - \mathcal{L}, \quad \mathsf{V} := Z - \mathcal{Z}, \quad L := P^{-1}\tilde{L}P, \quad Z := P^{-1}\tilde{Z}P, 
	\\ \nonumber
& \tilde{L} := \begin{pmatrix}
0 & 1 & 0 \\
0 & 0 & 1 \\
\frac{\lambda}{12 i \sqrt{3}}-\frac{u_{x}}{4}-\frac{iv}{4\sqrt{3}} & -\frac{1+2u}{4} & 0
\end{pmatrix}, \quad
\tilde{Z} := \begin{pmatrix}
-i \frac{1+2u}{2\sqrt{3}} & 0 & -i \sqrt{3} \\
- \frac{\lambda}{12} - i \frac{u_{x}}{4\sqrt{3}} - \frac{v}{4} & i \frac{1+2u}{4\sqrt{3}} & 0 \\
-i \frac{u_{xx}}{4\sqrt{3}}-\frac{v_{x}}{4} & - \frac{\lambda}{12} + \frac{iu_{x}}{4\sqrt{3}}-\frac{v}{4} & i \frac{1+2u}{4\sqrt{3}}
\end{pmatrix}
\end{align}
and $\lambda = (k^3 + k^{-3})/2$.

\subsection{Construction of $M$ for $t = 0$}
We start by constructing $M$ at time $t = 0$. We let $M_n$ be the restriction of $M$ to the open region $D_n$, $n = 1,\dots, 6$. For each $n = 1, \dots, 6$, we define $M_n(x,0,k)$ for $k \in D_n\setminus \hat{\mathcal{Q}}$ as the $3\times 3$-matrix valued solution of the following system of Fredholm integral equations: 
\begin{align}\label{Mndef}
(M_n)_{ij}(x,0,k) = \delta_{ij} + \int_{\gamma_{ij}^n} \left(e^{(x-x')\widehat{\mathcal{L}(k)}} (\mathsf{U}M_n)(x',0,k) \right)_{ij} dx', \qquad  i,j = 1, 2,3,
\end{align}
where $\gamma^n_{ij}$, $n = 1, \dots, 6$, $i,j = 1,2,3$, are defined by
 \begin{align} \label{gammaijndef}
 \gamma_{ij}^n =  \begin{cases}
 (-\infty,x),  & \re  l_i(k) < \re  l_j(k), 
	\\
(+\infty,x),  \quad & \re  l_i(k) \geq \re  l_j(k),
\end{cases} \quad \text{for} \quad k \in D_n.
\end{align}
The definition (\ref{Mndef}) of $M_n$ is extended by continuity to the boundary of $D_n$. Let $\mathcal{Z}$ be the zero set of the Fredholm determinants associated with \eqref{Mndef}. 

\begin{proposition}\label{Mnprop}
If $u_0, v_0 \in \mathcal{S}(\R)$, then the following hold:
\begin{enumerate}[$(a)$]
\item The function $M_n(x,0, k)$ is defined for $x \in \R$ and $k \in \bar{D}_n \setminus (\hat{\mathcal{Q}}\cup\mathcal{Z})$. For each $k \in \bar{D}_n  \setminus (\hat{\mathcal{Q}}\cup\mathcal{Z})$, $M_n(\cdot,0, k)$ is smooth and satisfies $(M_n)_x - [\mathcal{L}, M_n] = \mathsf{U} M_n$.

\item For each $x \in \R$, $M_n(x,0,\cdot)$ is continuous for $k \in \bar{D}_n \setminus (\hat{\mathcal{Q}}\cup\mathcal{Z})$ and analytic for $k \in D_n\setminus (\hat{\mathcal{Q}}\cup\mathcal{Z})$.

\item For each $\epsilon > 0$, there exists a $C = C(\epsilon)$ such that $|M_n(x,0,k)| \leq C$ for $x \in \R$ and for all $k \in \bar{D}_n$ with $\dist(k, \hat{\mathcal{Q}}\cup\mathcal{Z}) \geq \epsilon$.

\item For each $x \in \R$ and each integer $j \geq 1$, $\partial_k^j M_n(x,0, \cdot)$ has a continuous extension to $\bar{D}_n \setminus (\hat{\mathcal{Q}}\cup\mathcal{Z})$.

\item $\det M_n(x,0,k) = 1$ for $x \in \R$ and $k \in \bar{D}_n \setminus (\hat{\mathcal{Q}}\cup\mathcal{Z})$.

\item For each $x \in \R$, the sectionally meromorphic function $M(x,0,k)$ defined by $M(x,0,k) = M_n(x,0,k)$ for $k \in D_n$ satisfies the symmetries
\begin{subequations}\label{Msymm}
\begin{align}\label{Msymma}
 & M(x,0, k) = \mathcal{A} M(x,0,\omega k)\mathcal{A}^{-1} = \mathcal{B} M(x,0,k^{-1}) \mathcal{B}, \qquad k \in \C \setminus (\hat{\mathcal{Q}}\cup\mathcal{Z}),
	\\
& \overline{(M^{A})(x,0,\bar{k})} = \bigg\{ \frac{u_{0}(x)}{2}\begin{pmatrix}
1 & 1 & 1 \\
1 & 1 & 1 \\
1 & 1 & 1 
\end{pmatrix} + R(k)^{-1} \bigg\}M(x,0,k)R(k), \qquad k \in \C \setminus (\hat{\mathcal{Q}}\cup\mathcal{Z}), \label{MsymmR}
\end{align}
\end{subequations}
where $M^A := (M^{-1})^T$ and 
\begin{align}\label{def of R}
R(k) := -4k^{2} \begin{pmatrix}
0 &  \frac{\omega}{(k^{2}-1)(k^{2}-\omega^{2})} &  0 \\
\frac{\omega^{2}}{(k^{2}-1)(k^{2}-\omega)} & 0 & 0 \\
0 & 0 & \frac{1}{(k^{2}-\omega)(k^{2}-\omega^{2})}
\end{pmatrix}.
\end{align} 
\end{enumerate}
\end{proposition}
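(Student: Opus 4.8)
The plan is to establish the statements $(a)$--$(f)$ in essentially the same way as in \cite[Subsection 3.10]{CLmain}, tracking where Assumption \ref{solitonassumption} forces us to adjoin the extra exceptional set $\mathcal{Z}$ of Fredholm zeros. First, for parts $(a)$--$(d)$, I would analyze the Fredholm integral equations \eqref{Mndef}. For $k$ in a fixed compact subset of $D_n$ staying a distance $\geq \epsilon$ from $\hat{\mathcal{Q}}$, the integral operator in \eqref{Mndef} is a bounded operator on a suitable space of bounded continuous matrix functions (the contours $\gamma_{ij}^n$ are chosen in \eqref{gammaijndef} precisely so that the exponential factors $e^{(x-x')(\re l_i - \re l_j)}$ decay along them), it depends analytically on $k$, and by the Schwartz decay of $u_0, v_0$ it is compact; this is exactly the setup of \cite[Props.\ 3.2--3.9]{CLmain}, which the excerpt tells us carry over verbatim. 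By the analytic Fredholm theorem, $I$ minus this operator is invertible for all $k \in D_n \setminus (\hat{\mathcal{Q}} \cup \mathcal{Z})$, where $\mathcal{Z}$ is by definition the zero set of the associated Fredholm determinants, and on that set $M_n$ exists, is unique, depends analytically (in $k$) and smoothly (in $x$), and satisfies the $x$-part of the Lax equation $(M_n)_x - [\mathcal{L}, M_n] = \mathsf{U} M_n$ by differentiating \eqref{Mndef}; continuity up to $\bar D_n$, the local uniform bound in $(c)$, and the continuous extension of $\partial_k^j M_n$ in $(d)$ all follow from the standard resolvent estimates together with the Schwartz hypothesis, exactly as in \cite{CLmain}.

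For part $(e)$, the unit determinant, I would use that $\det M_n$ satisfies a first-order scalar ODE in $x$: from $(M_n)_x = [\mathcal{L}, M_n] + \mathsf{U} M_n$ and Jacobi's formula, $\partial_x \det M_n = \tr(\mathcal{L} - \mathcal{L} + \mathsf{U}) \det M_n = (\tr \mathsf{U}) \det M_n$, and $\tr \mathsf{U} = 0$ since $\mathsf{U} = P^{-1}(\tilde L)P - \mathcal{L}$ with $\tr \tilde L = 0$ and $\tr \mathcal{L} = l_1 + l_2 + l_3 = 0$ (the $l_j$ being, up to scaling, the three values $i(\omega^j k + (\omega^j k)^{-1})/(2\sqrt3)$, whose sum vanishes). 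Hence $\det M_n$ is independent of $x$; evaluating as $x \to \pm\infty$ along the appropriate direction dictated by \eqref{gammaijndef} — where the Volterra-type tails vanish and $M_n \to I$ entrywise in the relevant limit — gives $\det M_n \equiv 1$.

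For part $(f)$, the symmetries, the strategy is the one used throughout this circle of papers: verify that the transformed objects $\mathcal{A} M(x,0,\omega k)\mathcal{A}^{-1}$, $\mathcal{B} M(x,0,k^{-1})\mathcal{B}$, and the right-hand side of \eqref{MsymmR} each satisfy the same defining integral equation \eqref{Mndef} (with $n$ permuted according to how the maps $k \mapsto \omega k$, $k \mapsto k^{-1}$, $k \mapsto \bar k$ permute the sectors $D_n$ and their associated contours $\gamma_{ij}^n$), using the algebraic identities $\mathcal{A}^{-1}\mathcal{L}(\omega k)\mathcal{A} = \mathcal{L}(k)$, $\mathcal{B} \mathcal{L}(k^{-1})\mathcal{B} = $ the relevant permuted diagonal, the corresponding conjugation relations for $\mathsf{U}$ inherited from $P(k)$ and $\tilde L$, and for \eqref{MsymmR} the reality of $u_0, v_0$ together with the explicit $R(k)$ in \eqref{def of R}; then invoke uniqueness of the Fredholm solution on $\C \setminus (\hat{\mathcal{Q}} \cup \mathcal{Z})$ to conclude equality. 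One must check that $\mathcal{Z}$ is itself invariant under $k \mapsto \omega k$, $k \mapsto k^{-1}$, $k \mapsto \bar k^{-1}$, which again follows because these maps conjugate the integral operators into one another, so their Fredholm determinants vanish simultaneously.

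The main obstacle I anticipate is \emph{not} any single computation but the bookkeeping of domains: unlike the compactly supported case, the entries of $M_n$ have genuinely restricted domains of analyticity (as the introduction stresses), so one must be careful that the sectors $D_n$, the contours $\gamma_{ij}^n$, and the exceptional set $\mathcal{Z}$ all transform compatibly under the three symmetry maps, and that the ``extension by continuity to $\partial D_n$'' in \eqref{Mndef} is consistent with the symmetry relations on the overlapping boundary rays. Establishing that $\mathcal{Z}$ (together with $\hat{\mathcal{Q}}$) is the \emph{only} obstruction — i.e.\ that the analytic Fredholm alternative applies uniformly enough to give the local boundedness in $(c)$ away from $\hat{\mathcal{Q}} \cup \mathcal{Z}$ — is the technical heart, but it is essentially identical to \cite{CLmain} since the soliton zeros of $s_{11}$, which distinguish the present setting, do not enter the construction of $M_n$ itself (they enter later, in the residue conditions of RH problem \ref{RHn}).
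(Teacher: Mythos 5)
Your proposal is correct and takes essentially the same route as the paper, which states Proposition \ref{Mnprop} without proof precisely because the argument is the Fredholm-theoretic construction of \cite[Subsection 3.10]{CLmain} that you reconstruct: the analytic Fredholm alternative for \eqref{Mndef} with $\mathcal{Z}$ the zero set of the Fredholm determinants (giving $(a)$--$(d)$), the traceless-$\mathsf{U}$ determinant argument with the $x\to+\infty$ limit and continuity up to $\partial D_n$ (giving $(e)$), and the symmetries via uniqueness of the Fredholm solution together with the invariance of $\mathcal{Z}$ under the symmetry maps (giving $(f)$). Only cosmetic slips remain --- e.g.\ the conjugation identity should read $\mathcal{A}\mathcal{L}(\omega k)\mathcal{A}^{-1}=\mathcal{L}(k)$, the bound in $(c)$ must also be uniform as $k\to\infty$ in $\bar D_n$, and for \eqref{MsymmR} the object $\overline{M^A(x,0,\bar k)}$ is characterized by the adjoint integral equation rather than by \eqref{Mndef} itself --- none of which affect the approach.
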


In the solitonless case, the behavior of $M$ as $k \to \infty$ and as $k \to \pm 1$ was described in \cite[Lemmas 3.12 and 3.18]{CLmain}. The presence of solitons does not affect these lemmas, which we do not reproduce here.


\begin{lemma}[Jump condition for $M$]\label{Mjumplemma}
Let $u_0, v_0 \in \mathcal{S}(\R)$.
For each $x \in \R$, $M(x,0,k)$ satisfies the jump condition
\begin{align*}
  M_+(x,0,k) = M_-(x,0, k) v(x, 0, k), \qquad k \in \Gamma \setminus (\hat{\mathcal{Q}}\cup\mathcal{Z}),
\end{align*}
where $v$ is the jump matrix defined in (\ref{vdef}).
\end{lemma}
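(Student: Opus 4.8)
The strategy is the classical one for deriving the jump matrix of an inverse-scattering Riemann--Hilbert problem: express the sectionally defined function $M$ in terms of the fundamental Volterra solutions $X, Y, X^A, Y^A$ in each sector $D_n$, and then compute the ratio $M_-^{-1}M_+$ across each of the nine subcontours $\Gamma_j$ using the linear relations between these eigenfunctions encoded by the scattering matrices $s(k)$ and $s^A(k)$. First I would recall, from \cite[Subsection 3.10]{CLmain} (whose statements carry over under Assumption \ref{solitonassumption}), the precise column-by-column identification of $M_n$ on $\bar D_n$ in terms of the appropriate columns of $X$ and $Y$ (and, for the entries controlled by the adjoint problem, of $X^A$ and $Y^A$), chosen so that each entry is bounded and analytic in $D_n$; the choice of which eigenfunction supplies which column is dictated precisely by the sign of $\re(l_i - l_j)$ recorded in \eqref{gammaijndef}. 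Since these identifications are unchanged by the presence of solitons away from $\mathcal{Z}$, the computation of $v_j$ on $\Gamma_j \setminus (\hat{\mathcal{Q}} \cup \mathcal{Z})$ is formally identical to the solitonless case.

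Concretely, on the six ``inner'' rays $\Gamma_1,\dots,\Gamma_6$ the two adjacent sectors $D_n, D_{n'}$ differ in the ordering of exactly two of the three real parts $\re l_1, \re l_2, \re l_3$, so $M_+$ and $M_-$ agree in one column and are related in the other two by the $2\times 2$ scattering data; substituting the definitions \eqref{r1r2def} of $r_1, r_2$ in terms of entries of $s$ and $s^A$, together with the symmetry relations $s_{11}(k) = s_{11}(\omega/k)$, $s_{11}^A(k) = \overline{s_{11}(\bar k^{-1})}$ and their analogues from \cite[Propositions 3.5, 3.9]{CLmain}, reproduces $v_1,\dots,v_6$ in \eqref{vdef}. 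On the three ``outer'' arcs $\Gamma_7,\Gamma_8,\Gamma_9$ (the parts of the unit circle), all three real parts change order, so the jump is a genuine $3\times 3$ matrix; here one must also invoke the relation \eqref{r1r2 relation on the unit circle} (equivalently the definition \eqref{def of f} of $f$) to rewrite the product of elementary scattering factors in the compact form displayed for $v_7, v_8, v_9$. Throughout, the conjugation matrices $\mathcal{A}$ and $\mathcal{B}$ from \eqref{def of Acal and Bcal} and the rotation/inversion symmetries \eqref{Msymm} of Proposition \ref{Mnprop}(f) let us reduce the nine computations to essentially one or two: once the jump across $\Gamma_1$ is known, the jumps across $\Gamma_2,\Gamma_3$ follow by applying $k \mapsto \omega k$, and the jumps across $\Gamma_4,\Gamma_5,\Gamma_6$ by applying $k \mapsto k^{-1}$, with the outer arcs handled similarly.

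The only genuinely new point compared with \cite{CLmain} is the \emph{domain of validity}: in the solitonless case the eigenfunctions and $s, s^A$ are defined (with the relevant analyticity) on the full sectors, whereas here the entries have restricted domains of definition and one must exclude the Fredholm zero set $\mathcal{Z}$. So the main obstacle — or rather the main bookkeeping burden — is to check that on each $\Gamma_j \setminus (\hat{\mathcal{Q}} \cup \mathcal{Z})$ all the eigenfunction entries entering the identification of $M_\pm$ are simultaneously defined and that $s_{11}$, $s_{11}^A$ (and the other denominators) are nonzero there, so that $r_1, r_2$ make sense and the algebraic identities are legitimate. This follows from the description of the domains of definition and the (soliton-aware) versions of \cite[Propositions 3.2--3.9]{CLmain}, combined with the observation, noted after Assumption \ref{solitonassumption}, that the zeros of $s_{11}$ and $s_{11}^A$ in the relevant regions lie in $\mathsf{Z}$, $\omega\mathsf{Z}^{-1}$, $\mathsf{Z}^{-*}$, $\omega\mathsf{Z}^{*}$ — all of which are disjoint from $\Gamma$ by our genericity hypothesis that $\mathsf{Z}$ avoids $\Gamma$. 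Having dispensed with these domain issues, the actual algebra is identical to \cite{CLmain} and may be omitted or cited.
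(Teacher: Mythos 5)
Your proposal is correct and follows essentially the same route as the paper, which omits the proof of Lemma \ref{Mjumplemma} precisely because the sector-by-sector identification of $M_\pm$ via $X,Y,X^A,Y^A$ and the resulting algebra on each $\Gamma_j$ are identical to the solitonless case in \cite{CLmain}. One small remark: the appeal to Assumption \ref{solitonassumption} in your last paragraph is unnecessary (and not available, since the lemma assumes only $u_0,v_0\in\mathcal{S}(\R)$); the domain issues are already handled by excluding the Fredholm zero set $\mathcal{Z}$ in the statement, exactly as in \cite{CLmain}.
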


\begin{lemma}\label{M1XYlemma}
Let $u_0, v_0 \in \mathcal{S}(\R)$.
The functions $M_2$ and $M_2^A = (M_2^{-1})^T$ can be expressed in terms of the entries of $X,Y,X^A, Y^A, s$, and $s^A$ as follows:
\begin{align}\label{M2M2Aexpressions}
M_2 = \begin{pmatrix} 
X_{11} & \frac{Y_{12}}{s_{22}^A} & \frac{Y_{21}^AX_{32}^A - Y_{31}^AX_{22}^A}{s_{11}}  \\
X_{21} & \frac{Y_{22}}{s_{22}^A} & \frac{Y_{31}^AX_{12}^A - Y_{11}^AX_{32}^A}{s_{11}}  \\
X_{31} & \frac{Y_{32}}{s_{22}^A} & \frac{Y_{11}^AX_{22}^A - Y_{21}^AX_{12}^A}{s_{11}} 
\end{pmatrix}, \qquad
M_2^A = \begin{pmatrix} 
\frac{Y_{11}^A}{s_{11}} & X_{12}^A & \frac{X_{21}Y_{32} - X_{31}Y_{22}}{s_{22}^A}  \\
\frac{Y_{21}^A}{s_{11}} & X_{22}^A & \frac{X_{31}Y_{12} - X_{11}Y_{32}}{s_{22}^A}  \\
\frac{Y_{31}^A}{s_{11}} & X_{32}^A  & \frac{X_{11}Y_{22} - X_{21}Y_{12}}{s_{22}^A}
\end{pmatrix},
\end{align}
for all $x \in \R$ and $k \in \bar{D}_2 \setminus (\hat{\mathcal{Q}}\cup\mathcal{Z})$.
\end{lemma}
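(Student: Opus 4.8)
The plan is to identify the columns of $M_2$ (and of $M_2^A$) with eigenfunctions that are analytic in $D_2$, and then to express those eigenfunctions in terms of the Volterra solutions $X,Y,X^A,Y^A$ via the scattering relations. First I would recall from the construction of $M_n$ in \eqref{Mndef}--\eqref{gammaijndef} which entries are integrated from $-\infty$ and which from $+\infty$ in the region $D_2$; this is dictated by the ordering of $\re l_1, \re l_2, \re l_3$ on $D_2$, which by \eqref{lmexpressions intro} and the geometry of Figure \ref{fig: Dn} is (say) $\re l_1 < \re l_2 < \re l_3$ on $D_2$. Consequently the first column $[M_2]_1$ is built entirely from integrals over $(-\infty,x)$, so it is (up to normalization at $-\infty$) a multiple of the first column of $Y$; the third column $[M_2]_3$ is built entirely from integrals over $(+\infty,x)$, hence related to the last column of $X$-type data; and the middle column $[M_2]_2$ is of mixed type. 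The standard device is: columns integrated from one end are expressed directly in terms of $X$ or $Y$; columns of mixed type are expressed in terms of the adjoint eigenfunctions $X^A,Y^A$ using the classical identity that the $j$-th column of $(M^{-1})^T$ is (a cofactor combination and hence) a solution of the transposed Lax equation, and that wedge products of two columns of one eigenfunction give a column of the adjoint of another.

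Concretely, the key steps are: (1) show $[M_2]_1 = [Y]_1$ using that both solve $\partial_x - [\mathcal L,\cdot]\cdot = \mathsf U\,\cdot$ on the first column with the same normalization $e^{-l_1 x}[\,\cdot\,]_1 \to e_1$ as $x \to -\infty$, together with analyticity in $D_2$; similarly identify $[M_2^A]_2 = [X^A]_2$. (2) For the third column of $M_2$, use the relation $X = Y s$ (equivalently $s$ from \eqref{def of s sA}) and the symmetry/cofactor identity to write $[M_2]_3$ as the normalized wedge $[Y^A]_1 \wedge [X^A]_2$ divided by the appropriate scattering coefficient; the wedge $[Y^A]_1 \wedge [X^A]_2$ is exactly the vector $w$ appearing in \eqref{wdef} up to relabeling, and the normalizing denominator is $s_{11}$ because $s_{11}$ is the relevant $(1,1)$ Fredholm/determinant-type coefficient controlling analyticity of this column in $D_2$ (this is where Assumption \ref{solitonassumption} enters, ensuring $s_{11}$ has only the prescribed simple zeros in $\bar D_2$). (3) For the second column of $M_2$, argue analogously that $[M_2]_2 = [Y]_2/s_{22}^A$, using that $Y_{i2}/s_{22}^A$ has the correct domain of analyticity in $D_2$ and the correct asymptotics as $x \to -\infty$. (4) Repeat steps (1)--(3) for $M_2^A = (M_2^{-1})^T$, whose columns are the normalized wedges of pairs of columns of $X,Y$, giving the stated $3\times 3$ matrix of $2\times 2$ minors over $s_{11}$ or $s_{22}^A$. (5) Finally, verify consistency: the claimed formulas must satisfy $M_2 (M_2^A)^T = I$ (i.e. $M_2^A = (M_2^{-1})^T$) and the symmetries of Proposition \ref{Mnprop}(f); this is a finite algebraic check using the unimodularity $\det X = \det Y = 1$ and the relation $X = Ys$.

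The main obstacle is \textbf{not} the algebra — which is the same as in the solitonless case \cite[Subsection 3.10]{CLmain} — but rather justifying that the individual entries on the right-hand side of \eqref{M2M2Aexpressions} are well-defined and analytic on the correct sub-domains of $D_2$ in the general (non-compactly-supported) case, where, as emphasized in the introduction, the entries of $X,Y,X^A,Y^A$ have only restricted domains of definition. Thus the careful part is to track, entry by entry, which combination $Y_{i1}^A X_{j2}^A$ etc. is defined where, and to check that the quotients by $s_{11}$ and $s_{22}^A$ have removable singularities except at the prescribed zeros in $\mathcal Z$ (and the points of $\hat{\mathcal Q}$), so that the identification with $M_2$, which is a priori only known to be defined and analytic on $\bar D_2 \setminus (\hat{\mathcal Q}\cup \mathcal Z)$ by Proposition \ref{Mnprop}, is valid on all of that set. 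For the entries integrated purely from one end this is immediate from the Volterra theory (Propositions 3.2--3.9 of \cite{CLmain}, which hold verbatim under Assumption \ref{solitonassumption}); for the mixed-type (wedge) entries one uses that the wedge of two one-sided solutions is an entire-in-$x$ object whose $k$-domain is the intersection of the two relevant domains, and that dividing by $s_{11}$ (resp. $s_{22}^A$) is precisely what is needed to turn it into the column of $M_2$ that is analytic throughout $D_2$. Once these domain bookkeeping points are settled, the proof reduces to matching normalizations as $x \to \pm\infty$ and invoking uniqueness of solutions of the relevant linear ODEs with prescribed asymptotics, exactly as in \cite{CLmain}.
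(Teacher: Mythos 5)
Your overall strategy --- identify each column of $M_2$ either with a one-sided Volterra column or with a quotient of adjoint-eigenfunction minors by a scattering coefficient, match normalizations via uniqueness for the ODE $\partial_x M - [\mathcal{L},M] = \mathsf{U}M$, and keep careful track of the restricted domains of definition, passing to limits of compactly supported data where needed --- is the intended route (the paper omits this proof precisely because it is the same as the solitonless argument in \cite{CLmain}). However, your step (1) is wrong and in fact contradicts the formula \eqref{M2M2Aexpressions} you are proving. On $D_2$ the exponents are ordered $\re l_1 < \re l_3 < \re l_2$, not $\re l_1 < \re l_2 < \re l_3$: for instance for $k \in (1,\infty) \subset D_2$ one computes from \eqref{lmexpressions intro} that $\re l_1 = -\frac{k-k^{-1}}{4}$, $\re l_3 = 0$, $\re l_2 = \frac{k-k^{-1}}{4}$. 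Consequently, by \eqref{gammaijndef} the \emph{first} column of $M_2$ is built entirely from integrals over $(+\infty,x)$ and is identified with $[X]_1$, normalized at $x=+\infty$ --- exactly as the first column of \eqref{M2M2Aexpressions} states --- and not with $[Y]_1$ normalized at $-\infty$. Likewise your classification of which columns are ``mixed'' is off: both the second and the third columns of $M_2$ have entries integrated from both ends (the second has two entries from $-\infty$ and one from $+\infty$, which is why it equals $[Y]_2/s_{22}^A$; the third is the one that requires the adjoint wedge $w/s_{11}$ with $w$ as in \eqref{wdef}). Since your step (1) would run the uniqueness/normalization argument with the wrong asymptotic condition at the wrong end of the line, it would fail as written; the error is structural to the setup (the ordering of $\re l_j$ on $D_2$), not a typo downstream.

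Two smaller points. First, Assumption \ref{solitonassumption} plays no role in this lemma: the statement is for arbitrary $u_0, v_0 \in \mathcal{S}(\R)$ and the identity is asserted on $\bar{D}_2 \setminus (\hat{\mathcal{Q}}\cup\mathcal{Z})$, where $\mathcal{Z}$ is the Fredholm-determinant zero set; the replacement of $\mathcal{Z}$ by $\hat{\mathsf{Z}}$ under Assumption \ref{solitonassumption} happens only later, in Lemma \ref{QtildeQlemma}. Second, in step (4) note that the first two columns of $M_2^A$ are directly the adjoint Volterra columns $[Y^A]_1/s_{11}$ and $[X^A]_2$, not wedges of columns of $X,Y$; only the third column of $M_2^A$ is such a wedge over $s_{22}^A$. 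With the ordering of $\re l_j$ on $D_2$ corrected and the columns re-assigned accordingly, the remaining ingredients of your plan (cofactor/wedge identities, the relation $X=Ys$, approximation by compactly supported data to justify the restricted domains, and the consistency check $M_2^A=(M_2^{-1})^T$ using $\det M_2=1$) do line up with the argument the paper relies on.
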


\begin{lemma}\label{QtildeQlemma}
Suppose $u_0,v_0 \in \mathcal{S}(\R)$ are such that Assumption \ref{solitonassumption} holds. Then the statements of Proposition \ref{Mnprop} and Lemmas \ref{Mjumplemma}-\ref{M1XYlemma} hold with $\mathcal{Z}$ replaced by $\hat{\mathsf{Z}}$.
\end{lemma}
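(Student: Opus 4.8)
The plan is to show that the only place where $\mathcal{Z}$ (the zero set of the Fredholm determinants associated with \eqref{Mndef}) actually matters for the statements quoted is through the denominators $s_{11}$ and $s_{22}^A$ appearing in Lemma \ref{M1XYlemma}, and that under Assumption \ref{solitonassumption} those denominators vanish precisely (and only to first order) at the points of $\hat{\mathsf{Z}}$. Concretely, I would first recall from \cite[Subsection 3.10]{CLmain} (or the analogue of the solitonless construction) that in $D_2$ the matrix $M_2$ is built from the Volterra eigenfunctions $X,Y,X^A,Y^A$ — which are entire in their relevant variables and unaffected by $\mathcal{Z}$ — divided by $s_{11}$ and $s_{22}^A$. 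Hence $\mathcal{Z}\cap \bar D_2$ is contained in the zero set of $s_{11}\,s_{22}^A$. By the symmetries $s_{11}(k)=s_{11}(\omega/k)$ and $s_{11}^A(k)=\overline{s_{11}(\bar k^{-1})}$ used already in Section \ref{mainsec}, together with the identity relating $s_{22}^A$ to $s_{11}$ (from \cite[Propositions 3.5 and 3.9]{CLmain}), one reduces the whole zero set in $\bar D_n\setminus\hat{\mathcal{Q}}$, $n=1,\dots,6$, to the $\omega$- and inversion-orbit of $\mathsf{Z}$, which is exactly $\hat{\mathsf{Z}}$.

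Next I would run the reverse inclusion: under Assumption \ref{solitonassumption}, $s_{11}$ has a simple zero at each $k_0\in\mathsf{Z}$ and no other zeros in $(\bar D_2\cup\partial\mathbb D)\setminus\hat{\mathcal{Q}}$, and the extra nondegeneracy hypotheses ($s_{22}^A(k_0)\neq 0$ for $k_0\notin\mathbb R$, and the positivity/nonvanishing condition for $k_0\in\mathbb R$) guarantee that at each such point the numerators in \eqref{M2M2Aexpressions} do not all vanish, so $M_2$ genuinely has a simple pole there rather than a removable singularity. Pulling this back through the $\mathcal{A},\mathcal{B}$-symmetries \eqref{Msymma} propagates simple poles to every point of $\hat{\mathsf{Z}}$ and shows there are no poles elsewhere; this is precisely what is needed to replace ``$\mathcal{Z}$'' by ``$\hat{\mathsf{Z}}$'' in parts $(a)$–$(f)$ of Proposition \ref{Mnprop}. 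The boundedness statement $(c)$ then holds with $\hat{\mathsf{Z}}$ in place of $\mathcal{Z}$ because away from an $\epsilon$-neighborhood of $\hat{\mathcal{Q}}\cup\hat{\mathsf{Z}}$ the denominators $s_{11},s_{22}^A$ are bounded below, uniformly in $x$, exactly as in \cite{CLmain}. The jump relation of Lemma \ref{Mjumplemma} and the representation of Lemma \ref{M1XYlemma} are algebraic identities among the eigenfunctions valid on $\bar D_2\setminus(\hat{\mathcal{Q}}\cup\mathcal{Z})$; since the eigenfunctions themselves are holomorphic off $\hat{\mathcal{Q}}$, these identities extend to $\bar D_2\setminus(\hat{\mathcal{Q}}\cup\hat{\mathsf{Z}})$ verbatim.

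The remaining point is that one must also rule out \emph{spurious} elements of $\mathcal{Z}$, i.e. points where the Fredholm determinant of \eqref{Mndef} vanishes but which are not zeros of $s_{11}s_{22}^A$. This is handled by the observation, already implicit in the solitonless analysis, that on $D_n\setminus\hat{\mathcal{Q}}$ the Fredholm equation \eqref{Mndef} is uniquely solvable iff the corresponding piece of $M$ built from $X,Y,X^A,Y^A$ has no pole, and the latter is controlled entirely by $s_{11}$ and $s_{22}^A$ via \eqref{M2M2Aexpressions} and its $\mathcal{A},\mathcal{B}$-images; combined with Assumption \ref{solitonassumption} this forces $\mathcal{Z}\cap(\bar D_n\setminus\hat{\mathcal{Q}})=\hat{\mathsf{Z}}\cap\bar D_n$ for every $n$. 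I expect the main obstacle to be the bookkeeping of domains: in the non-compactly-supported case the entries of $s$ and $X,Y,X^A,Y^A$ are only defined on the restricted sectors of Figure \ref{fig: S}, so one has to check that each symmetry used to transport the simple-pole information actually maps a region on which the relevant entries are defined onto another such region — but this is the same domain analysis already carried out in \cite[Propositions 3.5 and 3.9]{CLmain}, so no new idea is required and the proof is essentially a matter of inserting Assumption \ref{solitonassumption} at the appropriate places and citing \cite{CLmain}.
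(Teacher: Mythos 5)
Your core argument is the one the paper uses: by Lemma \ref{M1XYlemma}, $M_2$ is given by the columns $[X]_1$, $[Y]_2/s_{22}^A$, $w/s_{11}$; under Assumption \ref{solitonassumption} (and the symmetries $s_{22}^A(k)=s_{11}^A(1/k)$, $s_{11}^A(k)=\overline{s_{11}(\bar k^{-1})}$) these denominators are nonzero on $\bar D_2\setminus(\hat{\mathcal{Q}}\cup\mathsf{Z}\cup\mathsf{Z}^{*})$, and the $\mathcal{A},\mathcal{B}$-symmetries \eqref{Msymma} transport this to every $\bar D_n$, so the only possible singularities of $M$ lie in $\hat{\mathcal{Q}}\cup\hat{\mathsf{Z}}$ and $M_n$ extends, with all the properties of Proposition \ref{Mnprop} and Lemmas \ref{Mjumplemma}--\ref{M1XYlemma}, to $(\mathcal{Z}\cap\bar D_n)\setminus(\hat{\mathcal{Q}}\cup\hat{\mathsf{Z}})$. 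That is exactly how the paper's proof runs.

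Two of your additional steps, however, overreach. First, the inference that $\mathcal{Z}\cap\bar D_2$ is contained in the zero set of $s_{11}s_{22}^A$, and your later patch that \eqref{Mndef} is uniquely solvable if and only if the eigenfunction formula has no pole, are neither established in \cite{CLmain} nor justified here; the representation \eqref{M2M2Aexpressions} is only known to hold where the Fredholm solution exists, i.e.\ off $\mathcal{Z}$, so it says nothing a priori about where the Fredholm determinant vanishes. Fortunately none of this is needed: the lemma does not require $\mathcal{Z}$ and $\hat{\mathsf{Z}}$ to coincide, only that $M_n$ (redefined via the eigenfunction formula, which is regular there) extends across any point of $(\mathcal{Z}\cap\bar D_n)\setminus(\hat{\mathcal{Q}}\cup\hat{\mathsf{Z}})$ with the stated properties passing to the limit --- which is precisely the paper's concluding sentence. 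Second, your ``reverse inclusion'' (genuine simple poles at each point of $\hat{\mathsf{Z}}$) is irrelevant to the statement, since enlarging the excluded set only needs the containment direction, and as stated it is also too coarse: for $k_0\in\mathsf{Z}\cap\R$ both $s_{11}$ and $s_{22}^A$ vanish at $k_0$, yet the numerator $w(x,k_0)$ of the third column vanishes identically (see \eqref{wzeroatk0}), so that column is regular and the pole, if any, sits only in the second column via \eqref{ck0def2}; Assumption \ref{solitonassumption} is not invoked in the paper to certify nonvanishing numerators. Strip out these two detours and your proof coincides with the paper's.
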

\begin{proof}
Since $u_0, v_0$ satisfy Assumption \ref{solitonassumption},  Lemma \ref{M1XYlemma} implies that $M$ has no singularities apart from $k=\kappa_{j}$, $j=1,\ldots,6$, and from $k \in \mathsf{Z}$. Indeed, $s_{11}^{A}(k) \neq 0$ for $k \in \bar{D}_{5}\setminus (\hat{\mathcal{Q}}\cup \mathsf{Z}^{-*})$ so $s_{22}^{A}(k)=s_{11}^{A}(1/k) \neq 0$ for $k \in \bar{D}_{2}\setminus ( \hat{\mathcal{Q}} \cup \mathsf{Z}^{*} )$ and thus $M_2$ has no singularities in $\bar{D}_2 \setminus (\mathcal{Q}\cup \mathsf{Z} \cup \mathsf{Z}^{*})$. Using the symmetries (\ref{Msymma}), we then infer that $M_n$ has no singularities in $\bar{D}_n \setminus (\mathcal{Q}\cup \hat{\mathsf{Z}})$ for any $n$.
Thus, for any $n$, $M_{n}(x,0,k)$ can be extended to any $k \in (\mathcal{Z} \cap \bar{D}_n)  \setminus (\mathcal{Q}\cup \hat{\mathsf{Z}})$ by continuity, from which the claim follows.
\end{proof}

The following symmetry relation proved in \cite[Proposition 3.9$(h)$]{CLmain} will be useful in the sequel:
\begin{align}\label{sRsymm}
\overline{s^{A}(\bar{k})} = R(k)^{-1}s(k)R(k) \qquad \mbox{for } k \in \begin{pmatrix}
 \omega^2 \hat{\mathcal{S}} & \hat{\Gamma}_{1} & \hat{\Gamma}_{3} \\
 \hat{\Gamma}_{1} & \omega \hat{\mathcal{S}} & \hat{\Gamma}_{5} \\
\hat{\Gamma}_{3} & \hat{\Gamma}_{5} & \hat{\mathcal{S}}
 \end{pmatrix}\setminus \hat{\mathcal{Q}}.
 \end{align}
 More explicitly, we can write the relation in (\ref{sRsymm}) as
$$\begin{pmatrix} 
\overline{m_{11}(s(\bar{k}))} & -\overline{m_{12}(s(\bar{k}))} & \overline{m_{13}(s(\bar{k}))} \\
-\overline{m_{21}(s(\bar{k}))} & \overline{m_{22}(s(\bar{k}))} & -\overline{m_{23}(s(\bar{k}))} \\
\overline{m_{31}(s(\bar{k}))} & -\overline{m_{32}(s(\bar{k}))} & \overline{m_{33}(s(\bar{k}))}
\end{pmatrix} = 
\begin{pmatrix}
 s_{22}(k) & \tilde{r}(\frac{1}{k}) s_{21}(k)  & \tilde{r}(\omega k) s_{23}(k)
   	\\
\tilde{r}(k) s_{12}(k) & s_{11}(k) & \tilde{r}(\frac{1}{\omega^2 k})  s_{13}(k)
   	\\
\tilde{r}(\frac{1}{\omega k})  s_{32}(k) & \tilde{r}(\omega^2 k)  s_{31}(k)  & s_{33}(k)
\end{pmatrix},$$
 where $\tilde{r}(k)$ is given in (\ref{rtildedef}) and $m_{jk}(s(\bar{k}))$ denotes the $(jk)$th minor of the matrix $s(\bar{k})$.

The next lemma will be used in the proof of Theorem \ref{directth}.

\begin{lemma}\label{s13s31lemma}
Suppose $u_0, v_0 \in \mathcal{S}(\R)$ are compactly supported and that $s_{11}$ has a simple zero at some $k_0 \in D_2 \cap \R$. Then $s_{13}(k_0) = s_{31}(k_0) = 0$ and $i(\omega^2 k_0^2 - \omega) \frac{s_{12}(k_0)}{\dot{s}_{11}(k_0)} \in \R$.
\end{lemma}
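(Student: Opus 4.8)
The plan is to exploit the symmetry relation \eqref{sRsymm} together with the reality properties of $s$ that hold for compactly supported (hence entire-in-$k$) initial data. For compactly supported $u_0,v_0$, all entries of $s(k)$ are defined and analytic on $\C \setminus \hat{\mathcal{Q}}$, so in particular the relation \eqref{sRsymm} (equivalently the explicit $3\times 3$ display following it) holds at $k = k_0 \in D_2 \cap \R$. Since $k_0$ is real, $\bar{k}_0 = k_0$, so the left-hand side involves $\overline{m_{jk}(s(k_0))}$ while the right-hand side involves $s_{jk}(k_0)$ times rational factors $\tilde{r}$. The $(2,2)$ entry of that identity reads $\overline{m_{22}(s(k_0))} = s_{11}(k_0) = 0$, i.e. $m_{22}(s(k_0)) = s_{11}(k_0)s_{33}(k_0) - s_{13}(k_0)s_{31}(k_0) = 0$; since $s_{11}(k_0)=0$ this gives $s_{13}(k_0)s_{31}(k_0)=0$.

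To upgrade this to $s_{13}(k_0) = s_{31}(k_0) = 0$ separately, I would use further entries of \eqref{sRsymm} together with $s_{11}(k_0)=0$. From the $(1,3)$ and $(3,1)$ entries, $\overline{m_{13}(s(k_0))} = \tilde r(\tfrac{1}{\omega^2 k_0}) s_{13}(k_0)$ and $\overline{m_{31}(s(k_0))} = \tilde r(\omega^2 k_0)s_{31}(k_0)$, where $m_{13}(s) = s_{21}s_{32} - s_{31}s_{22}$ and $m_{31}(s) = s_{12}s_{23} - s_{22}s_{13}$. Also the columns (resp. rows) of $s$ satisfy linear dependence relations at a simple zero of $s_{11}$: because $\det s \equiv 1$ but the structure of $M_2$ in Lemma \ref{M1XYlemma} (or rather its analogue for $s$) forces the first column and first row of $s$ to become proportional to the third at $k_0$. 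Concretely, a simple zero of $s_{11}$ at $k_0\in D_2$ means (via the standard argument that $[s]_1$ and $[s]_3$ — or the relevant eigenfunction columns — align) that $s_{21}(k_0) = \alpha s_{23}(k_0)$, $s_{31}(k_0)=\alpha s_{33}(k_0)$ for some scalar $\alpha$, and similarly a row relation with some scalar $\beta$: $s_{12}(k_0) = \beta s_{32}(k_0)$, $s_{13}(k_0) = \beta s_{33}(k_0)$. Feeding these into $m_{13}$ and $m_{31}$ and combining with Assumption \ref{originassumption}-type genericity (so that $s_{33}(k_0)\neq 0$, which at an interior point $k_0$ follows from $\det s = 1$ and $s_{11}(k_0)=0$ forcing the cofactor $m_{11}= s_{22}s_{33}-s_{23}s_{32}\neq 0$, hence $s_{33}\neq 0$ unless $s_{32}=0$...) should pin down $\alpha = \beta = 0$, giving $s_{13}(k_0)=s_{31}(k_0)=0$.

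For the last assertion, $i(\omega^2 k_0^2 - \omega)\frac{s_{12}(k_0)}{\dot s_{11}(k_0)} \in \R$, I would use the $(1,2)$ entry of \eqref{sRsymm}, namely $-\overline{m_{12}(s(k_0))} = \tilde r(k_0)s_{12}(k_0)$, combined with its differentiated version. Since $s_{11}(k_0)=0$, one has $m_{12}(s) = s_{21}s_{33} - s_{23}s_{31}$; using $s_{31}(k_0)=0$ just established, $m_{12}(s(k_0)) = s_{21}(k_0)s_{33}(k_0)$. A parallel computation of $\dot s_{11}$ via differentiating the $(2,2)$ entry at $k_0$: $\overline{\frac{d}{dk}m_{22}(s(\bar k))}\big|_{k_0} = \dot s_{11}(k_0)$, and $\frac{d}{dk}m_{22}(s) = \dot s_{11}s_{33} + s_{11}\dot s_{33} - \dot s_{13}s_{31} - s_{13}\dot s_{31}$, which at $k_0$ (using $s_{11}(k_0)=s_{13}(k_0)=s_{31}(k_0)=0$) equals $\dot s_{11}(k_0)s_{33}(k_0)$. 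Hence $\overline{\dot s_{11}(k_0)s_{33}(k_0)} = \dot s_{11}(k_0)$... this needs $s_{33}(k_0)$ real and positive-ish; more carefully I would take the ratio $\frac{s_{12}(k_0)}{\dot s_{11}(k_0)}$ and show, via dividing the $(1,2)$ relation by the differentiated $(2,2)$ relation and tracking the explicit factor $\tilde r(k_0) = \frac{\omega^2 - k_0^2}{1-\omega^2 k_0^2}$, that $\overline{\frac{s_{12}(k_0)}{\dot s_{11}(k_0)}}$ equals $\overline{\tilde r(k_0)}^{\,-1}\cdot(\text{something})\cdot \frac{s_{12}(k_0)}{\dot s_{11}(k_0)}$, and that the prefactor is exactly $\frac{\overline{i(\omega^2k_0^2-\omega)}}{i(\omega^2 k_0^2-\omega)}$ up to the manipulations — i.e. the quantity $i(\omega^2 k_0^2-\omega)\frac{s_{12}(k_0)}{\dot s_{11}(k_0)}$ is invariant under complex conjugation, hence real.

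I expect the main obstacle to be the bookkeeping in the last part: correctly deriving and matching the reality/symmetry relation for the \emph{ratio} $s_{12}(k_0)/\dot s_{11}(k_0)$, since this requires differentiating \eqref{sRsymm} (not just evaluating it) and carefully canceling the rational factors $\tilde r$ so that precisely the combination $i(\omega^2 k_0^2 - \omega)$ survives. Establishing $s_{13}(k_0) = s_{31}(k_0) = 0$ individually (rather than just their product) is the other delicate point — it relies on the rank-drop structure of $s(k)$ at a simple zero of $s_{11}$ at an interior point of $D_2$, which I would extract from Lemma \ref{M1XYlemma} and the relation \eqref{sRsymm}, or alternatively cite directly from the corresponding analysis in \cite{CLmain}.
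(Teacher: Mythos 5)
Your opening step (using the $(2,2)$ entry of (\ref{sRsymm}) at the real point $k_0$ to get $m_{22}(s(k_0))=0$, hence $s_{13}(k_0)s_{31}(k_0)=0$) is exactly the paper's. The genuine gap is in how you upgrade this to $s_{13}(k_0)=s_{31}(k_0)=0$ separately. The "rank-drop" premise you invoke — that a simple zero of $s_{11}$ forces the first column and first row of the scattering matrix $s$ itself to be proportional to the third, $s_{21}=\alpha s_{23}$, $s_{31}=\alpha s_{33}$, $s_{12}=\beta s_{32}$, $s_{13}=\beta s_{33}$ — is unjustified (zeros of $s_{11}$ give linear dependence of \emph{eigenfunction} columns, not of columns of $s$), and moreover your intended endgame is self-contradictory: since $s_{33}(k_0)\neq 0$ (expand $\det s\equiv 1$ along the first column at $k_0$, which after the vanishings gives $-s_{21}(k_0)s_{12}(k_0)s_{33}(k_0)=1$), pinning down $\alpha=\beta=0$ would also force $s_{21}(k_0)=s_{12}(k_0)=0$, making the first row and column of $s(k_0)$ vanish and contradicting $\det s\equiv 1$. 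You have also paired the wrong entries of (\ref{sRsymm}): the right-hand sides involving $s_{13}$ and $s_{31}$ are the $(2,3)$ and $(3,2)$ entries, $-\overline{m_{23}(s(\bar k))}=\tilde r(\tfrac{1}{\omega^2 k})s_{13}(k)$ and $-\overline{m_{32}(s(\bar k))}=\tilde r(\omega^2 k)s_{31}(k)$; the $(1,3)$ and $(3,1)$ entries you cite involve $s_{23}$ and $s_{32}$. The paper's argument for this step is much simpler and needs no rank considerations: if $s_{13}(k_0)=0$, the $(2,3)$ entry gives $m_{23}(s(k_0))=s_{11}s_{32}-s_{12}s_{31}\big|_{k_0}=0$, hence $s_{12}(k_0)s_{31}(k_0)=0$; but $s_{12}(k_0)=0$ would make the entire first row of $s(k_0)$ vanish, contradicting $\det s\equiv 1$, so $s_{31}(k_0)=0$; the case $s_{31}(k_0)=0$ is symmetric. (Your parenthetical attempt to get $s_{33}(k_0)\neq 0$ from "$s_{11}(k_0)=0$ forcing $m_{11}\neq 0$" is also a non sequitur; the determinant expansion above is the correct route.)

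For the reality of $i(\omega^2k_0^2-\omega)s_{12}(k_0)/\dot s_{11}(k_0)$ your plan coincides with the paper's (conjugate $s_{12}$ through a minor identity from (\ref{sRsymm}), conjugate $\dot s_{11}$ through the differentiated $(2,2)$ identity, and take the ratio so that $s_{33}(k_0)$ cancels — no reality of $s_{33}$ is needed), but you stop short of the decisive computation: one must check that the rational factor conspires with the conjugation of $i(\omega^2k_0^2-\omega)$, i.e. that $i(\omega k_0^2-\omega^2)\big/\overline{\tilde r(k_0)}=i(\omega^2k_0^2-\omega)$ using $\overline{\tilde r(k_0)}=\tfrac{\omega-k_0^2}{1-\omega k_0^2}$ for real $k_0$; this is precisely where the combination $i(\omega^2k_0^2-\omega)$ is produced, and without it the claim is not established. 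Also note that the relation you want is the $(2,1)$ entry, $-\overline{m_{21}(s(\bar k))}=\tilde r(k)s_{12}(k)$ with $m_{21}=s_{12}s_{33}-s_{13}s_{32}$; you instead compute $m_{12}=s_{21}s_{33}-s_{23}s_{31}$, which pairs with $s_{21}$, not $s_{12}$. So the overall strategy (symmetry (\ref{sRsymm}) at real $k_0$, its $k$-derivative, and $\det s\equiv 1$) is the right one and is the paper's, but as written the middle step fails and the final step is left at the point where the actual verification is required.
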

\begin{proof}
Since  $u_0, v_0$ have compact support, all entries of $s(k)$ are defined and analytic for  $k \in \C \setminus \hat{\mathcal{Q}}$ and the symmetry relation (\ref{sRsymm}) holds for all $k \in \C \setminus \hat{\mathcal{Q}}$.

Since $s_{11}(k_0) = 0$ and $k_0$ is real, we see from (\ref{sRsymm}) that $m_{22}(s(k)) = s_{11}(k) s_{33}(k) - s_{13}(k) s_{31}(k)$ also has a zero at $k_0$. Thus we must have either $s_{13}(k_0) = 0$ or $s_{31}(k_0) = 0$. 
Suppose first that $s_{13}(k_0) = 0$. Then, using (\ref{sRsymm}) again, $m_{23}(k_0) = 0$. But $m_{23} = s_{11}s_{32} - s_{12}s_{31}$, so we conclude that either $s_{12}$ or $s_{31}$ must vanish at $k_0$. If $s_{12}(k_0) = 0$, then the first row of $s(k)$ would vanish at $k_0$, which contradicts the fact that $\det s(k)$ is identically equal to $1$. So we must have $s_{31}(k_0) = 0$.
A similar argument shows that $s_{13}(k_0) = 0$ if $s_{31}(k_0) = 0$. It follows that both $s_{13}$ and $s_{31}$  vanish at $k_0$. 

To prove the last assertion, we employ the $R$-symmetry (\ref{sRsymm}) twice and the fact that $s_{13}(k_0) = s_{31}(k_0) = 0$ to get
\begin{align*}
  \overline{i(\omega^2 k_0^2 - \omega)\frac{s_{12}(k_0)}{\dot{s}_{11}(k_0)} }
&  =  -i (\omega k_0^2 - \omega^2) \frac{\overline{s_{12}(k_0)}}{\overline{\dot{s}_{11}(k_0)}}
  = \frac{i (\omega k_0^2 - \omega^2)}{\overline{\tilde{r}(k_0)}} \frac{m_{21}(s(k_0))}{\partial_k m_{22}(s(k_0))}
	\\
& = i(\omega^2 k_0^2 - \omega) \frac{s_{12}s_{33} - s_{13}s_{32}}{\partial_k (s_{11}s_{33} - s_{13}s_{31})}\bigg|_{k= k_0}
   =i(\omega^2 k_0^2 - \omega) \frac{s_{12}(k_0)}{\dot{s}_{11}(k_0)},
\end{align*} 
which gives the desired conclusion.
\end{proof}

\subsection{Proof of Theorem \ref{directth}}\label{directproofsubsec}
By Assumption \ref{solitonassumption}, the set $\mathsf{Z} \cap \Gamma$ is empty. Hence the proof of properties $(\ref{Theorem2.3itemi})$--$(\ref{Theorem2.3itemv})$ is identical to \cite[Proof of Theorem 2.3 $(i)$--$(v)$]{CLmain}. 

Let us prove ($\ref{Theorem2.3itemvi}$). By Assumption \ref{solitonassumption}, $\mathsf{Z}$ is a finite subset of $D_{\mathrm{reg}} \cup (-1,0) \cup (1,\infty)$. We need to show that the complex constants $\{c_{k_0}\}_{k_0 \in \mathsf{Z}}$ are well-defined by (\ref{ck0def}).

Suppose first that $k_{0} \in \mathsf{Z}\setminus \mathbb{R}$. We will show that if $w$ is defined by (\ref{wdef}) then there is a unique complex constant $c_{k_0}$ such that (\ref{ck0def1}) holds.
Let $\eta \in C_c^\infty(\R)$ be a bump function such that $\eta(x) = 1$ for $|x| \leq 1$ and $\eta(x) = 0$ for $|x| \geq 2$. For $j \geq 1$, let $\eta_j(x) = \eta(x/j)$. 
If $f \in \mathcal{S}(\R)$, then $\eta_jf$ is a sequence in $C_c^\infty(\R)$ converging to $f$ in $\mathcal{S}(\R)$ as $j \to \infty$. 
Let $X^{(i)}(x,k), Y^{(i)}(x,k), X^{A(i)}(x,k), Y^{A(i)}(x,k), s^{(i)}(k), s^{A(i)}(k)$ be the eigenfunctions and scattering matrices associated with 
\begin{align}\label{uvsequence}
(u_0^{(i)}(x), v_0^{(i)}(x)) := (\eta_iu_0, \eta_iv_0) \in \mathcal{S}(\R) \times \mathcal{S}(\R).
\end{align} 
By \cite[Lemma 3.13]{CLmain} with $(u_{0},v_{0})$ replaced by  $(u_0^{(i)}, v_0^{(i)})$, we have, for all $k \in D_2$ except at the zeros of $s_{11}^{(i)}$ and $m_{22}(s^{(i)})$, 
\begin{align}\label{lol12}
M_2^{(i)}(x,k) & = Y^{(i)}(x, k) e^{x\widehat{\mathcal{L}(k)}} S_2^{(i)}(k)	 = X^{(i)}(x, k) e^{x\widehat{\mathcal{L}(k)}} T_2^{(i)}(k),
\end{align}
where 
$$S_2^{(i)}(k) =  \begin{pmatrix}
 s_{11}^{(i)} & 0 & 0 \\
 s_{21}^{(i)} & \frac{1}{m_{22}(s^{(i)})} & \frac{m_{32}(s^{(i)})}{s_{11}^{(i)}} \\
 s_{31}^{(i)} & 0 & \frac{m_{22}(s^{(i)})}{s_{11}^{(i)}} \\
\end{pmatrix},
	\qquad
  T_2^{(i)}(k) =  \begin{pmatrix}
 1 & -\frac{m_{21}(s^{(i)})}{m_{22}(s^{(i)})} & -\frac{s_{13}^{(i)}}{s_{11}^{(i)}} \\
 0 & 1 & 0 \\
 0 & -\frac{m_{23}(s^{(i)})}{m_{22}(s^{(i)})} & 1 
\end{pmatrix}.
$$
The third column of \eqref{lol12} yields, for $k \in D_2$,
\begin{align}\label{m32Ym22sY}
m_{32}(s^{(i)}) e^{x(l_2 - l_3)} [Y^{(i)}]_2 + m_{22}(s^{(i)}) [Y^{(i)}]_3 = -s_{13}^{(i)}e^{x(l_1 - l_3)} [X^{(i)}]_1 + s_{11}^{(i)}[X^{(i)}]_3.
\end{align}
Applying $\det(\cdot, [X^{(i)}]_1, [Y^{(i)}]_2)$ to both sides of (\ref{m32Ym22sY}), we obtain
$$m_{22}(s^{(i)}) \det([Y^{(i)}]_3, [X^{(i)}]_1, [Y^{(i)}]_2) = s_{11}^{(i)}\det([X^{(i)}]_3, [X^{(i)}]_1, [Y^{(i)}]_2).$$
Using the inner product notation $u \cdot v = u_1 v_1 + u_2 v_2 + u_3 v_3$ (no complex conjugate) for two vectors $u = ( u_1, u_2, u_3 )$ and $v =  ( v_1, v_2, v_3 )$ in $\C^3$, we can write this as
\begin{align}\label{m22X1s11Y2}
m_{22}(s^{(i)}) [X^{(i)}]_1 \cdot [Y^{A(i)}]_1 = s_{11}^{(i)} [Y^{(i)}]_2 \cdot [X^{A(i)}]_2.
\end{align}
For $k \in D_2$, all quantities in this equation have well-defined limits as $i \to \infty$, so letting $i \to \infty$ gives
$$s^{A}_{22}(k) [X(x,k)]_1 \cdot [Y^A(x,k)]_1 = s_{11}(k) [Y(x,k)]_2 \cdot [X^A(x,k)]_2, \qquad k \in D_2.$$
Evaluating at $k_0$ and using that $s_{11}(k_0) = 0$ and $s^{A}_{22}(k_0) \neq 0$ by Assumption \ref{solitonassumption}, we obtain
$$[X(x,k_0)]_1 \cdot [Y^A(x,k_0)]_1 = 0.$$
On the other hand, since $\det([X^{(i)}]_1, [X^{(i)}]_1, [X^{(i)}]_3)(x,k_0) = 0$, we have 
$$[X(x,k_0)]_1 \cdot [X^A(x,k_0)]_2 = 0.$$
It follows that $[X(x,k_0)]_1$ lies in the kernel of the linear map $\C^3 \to \C^{2}$ defined by the matrix
\begin{align}\label{sfAdef}
\mathsf{A} = \begin{pmatrix} 
Y^A_{11} & Y^A_{21} & Y^A_{31} \\
X^A_{12} & X^A_{22} & X^A_{32} 
\end{pmatrix}(x,k_{0}).
\end{align}
The vector $w(x,k_{0})$ also lies in the kernel of $\mathsf{A}$ (by a direct computation or by noting that it is the cross product of the two rows of $\mathsf{A}$). 
If $[Y^A(x,k_0)]_1$ and $[X^A(x,k_0)]_2$ are linearly independent for some $x$ then they are linearly independent for all $x$, because $e^{-x l_1(k_0)} [Y^A(x,k_0)]_1$  and $e^{-x l_2(k_0)} [X^A(x,k_0)]_2$ satisfy the same linear ODE in $x$. As $x \to -\infty$, $[Y^A(x,k_0)]_1 \to (1,0,0)$ and, by (\ref{def of X XA Y YA}) and (\ref{def of s sA}), $X_{22}^A(x,k_0) \to s^A_{22}(k_0) \neq 0$. Thus $[Y^A(x,k_0)]_1$ and $[X^A(x,k_0)]_2$ are linearly independent for large negative $x$. Hence $\rank \mathsf{A} = 2$ and so, by the rank-nullity theorem, $\dim \ker \mathsf{A} = 1$. 
Since the vectors $[X(x,k_0)]_1$ and $w(x,k_{0})$ both lie in $\ker \mathsf{A}$, they are linearly dependent for each $x$. Hence there is a function $\mathrm{c}_{k_{0}}(x)$ such that
$$\frac{w(x,k_{0})}{\dot{s}_{11}(k_0)} = \mathrm{c}_{k_{0}}(x)  e^{(l_1(k_0)-l_3(k_0))x} [X(x,k_0)]_1 \qquad \text{for all $x \in \R$}.$$
Since the vectors $e^{x l_3(k_0)} w(x,k_{0})$ and $e^{x l_1(k_0)} [X]_1(x,k_{0})$ satisfy the same linear ODE in $x$, $c_{k_{0}} = c_{k_{0}}(x) \in \C$ is in fact independent of $x$. Since $[X(x,k_0)]_1$ is not identically zero, $c_{k_{0}}$ is uniquely determined.
This shows that $c_{k_0}$ is well-defined in the case when $k_{0} \in \mathsf{Z}\setminus \mathbb{R}$. 

Suppose now that $k_{0} \in \mathsf{Z} \cap \mathbb{R}$. We need to show that there is a unique complex constant $c_{k_0}$ such that (\ref{ck0def2}) holds.
By Assumption \ref{solitonassumption}, $s_{11}$ has a simple zero at $k_0$.
The $R$-symmetry relation (\ref{sRsymm}) gives $\overline{s_{22}^A(\bar{k})} = s_{11}(k)$, so that $s_{22}^A$ also has a simple zero at $k_0$. In particular, $\dot{s}_{22}^A(k_0) \neq 0$.
Assumption \ref{solitonassumption} also shows that there exists an $\epsilon > 0$ such that $\overline{D_\epsilon(k_0)} \subset D_2$ and $s_{11}(k) \neq 0$ for all $k \in \overline{D_\epsilon(k_0)} \setminus \{k_0\}$. 
Since $k_0$ is a simple zero of $s_{11}(k)$, we have
$$\frac{1}{2\pi i} \int_{\partial D_\epsilon(k_0)} \frac{\dot{s}_{11}(k)}{s_{11}(k)} dk = 1 \quad \text{and} \quad
\frac{1}{2\pi i} \int_{\partial D_\epsilon(k_0)} \frac{k\dot{s}_{11}(k)}{s_{11}(k)} dk = k_0.$$
The functions $s^{(i)}_{11}$ converge uniformly to $s_{11}$ on $\partial D_\epsilon(k_0)$ and thus $s_{11}^{(i)}$ is nonzero on $\partial D_\epsilon(k_0)$ for all large enough $i$ and 
\begin{align}\label{ints11limits}
\frac{1}{2\pi i} \int_{\partial D_\epsilon(k_0)} \frac{\dot{s}^{(i)}_{11}(k)}{s_{11}^{(i)}(k)} dk \to 1 \quad \text{and} \quad
\frac{1}{2\pi i} \int_{\partial D_\epsilon(k_0)} \frac{k\dot{s}_{11}^{(i)}(k)}{s_{11}^{(i)}(k)} dk \to k_0
\end{align}
as $i \to \infty$. For all large enough $i$, we thus have $\frac{1}{2\pi i} \int_{\partial D_\epsilon(k_0)} \frac{\dot{s}^{(i)}_{11}(k)}{s_{11}^{(i)}(k)} dk = 1$ because the left-hand side takes only integer values, so the argument principle implies that $s_{11}^{(i)}(k)$ has exactly one zero in $D_\epsilon(k_0)$ and that this zero is simple. Denoting this simple zero by $k_0^{(i)}$, the second limit in (\ref{ints11limits}) shows that $k_0^{(i)} \to k_0$ as $i \to \infty$.

We now distinguish two cases: either there exist arbitrarily large indices $i$ such that $k_0^{(i)} \notin \R$ or there do not. By picking subsequences, we see that it is sufficient to consider the following two cases: 
\begin{enumerate}
\item[Case 1.] $k_0^{(i)} \in \R$ for all $i$.
\item[Case 2.] $k_0^{(i)} \notin \R$ for all $i$.
\end{enumerate}

Let us first consider Case 1. Since $k_0^{(i)} \in \R$ for all $i$, we can deduce from Lemma \ref{s13s31lemma} (applied with $u_0, v_0$ replaced by $u_0^{(i)}, v_0^{(i)}$) that $s_{13}^{(i)}(k_0^{(i)}) = s_{31}^{(i)}(k_0^{(i)}) = 0$, and hence also
\begin{align}\label{m22m230}
m_{22}(s^{(i)}(k_0^{(i)})) = m_{23}(s^{(i)}(k_0^{(i)})) = m_{32}(s^{(i)}(k_0^{(i)})) = 0,
\end{align}
for all $i$. 
On the other hand, by considering the second column of the second equality in \eqref{lol12},
$$[Y^{(i)}]_2 = -m_{21}(s^{(i)}) e^{(l_1 - l_2)x} [X^{(i)}]_1 + m_{22}(s^{(i)}) [X^{(i)}]_2 - m_{23}(s^{(i)}) e^{(l_3 - l_2)x} [X^{(i)}]_3$$
for $k \in D_2$. Evaluating this equation at $k_0^{(i)}$ and using (\ref{m22m230}), we obtain
\begin{align*}
 [Y^{(i)}(x,k_0^{(i)})]_2 = -m_{21}(s^{(i)}(k_0^{(i)})) e^{(l_1(k_0^{(i)}) - l_2(k_0^{(i)}))x} [X^{(i)}(x, k_0^{(i)})]_1.
\end{align*}
Dividing by $\dot{s}_{22}^A(k_0)$, we can write this as
\begin{align}\label{Yi2m21eX1}
 \frac{[Y^{(i)}(x,k_0^{(i)})]_2}{\dot{s}_{22}^A(k_0)} = c_{k_0}^{(i)} e^{(l_1(k_0^{(i)}) - l_2(k_0^{(i)}))x} [X^{(i)}(x, k_0^{(i)})]_1,
\end{align}
where
\begin{align}\label{ck0idef}
c_{k_0}^{(i)} := -\frac{m_{21}(s^{(i)}(k_0^{(i)}))}{\dot{s}_{22}^A(k_0)}.
\end{align}
Since $X_{11}(x,k_0) \to 1$ as $x \to +\infty$, there is an $x_0$ such that $X_{11}(x_0,k_0) \neq 0$. Consequently, the limit 
$$\lim_{i \to \infty} c_{k_0}^{(i)} = \lim_{i\to \infty} \frac{Y_{12}^{(i)}(x_0,k_0^{(i)})e^{-(l_1(k_0^{(i)}) - l_2(k_0^{(i)}))x_{0}}}{\dot{s}_{22}^A(k_0)X_{11}^{(i)}(x_{0}, k_0^{(i)})}
= \frac{Y_{12}(x_0,k_0)e^{-(l_1(k_0) - l_2(k_0))x_{0}}}{\dot{s}_{22}^A(k_0)X_{11}(x_0, k_0)}$$ 
exists as a finite number.
Sending $i \to \infty$ in (\ref{Yi2m21eX1}), we conclude that (\ref{ck0def2}) holds with $c_{k_0} := \lim_{i \to \infty} c_{k_0}^{(i)} \in \C$.

We next consider Case 2. Recall that $k_0^{(i)}$ is the only zero of $s_{11}^{(i)}(k)$ in $D_\epsilon(k_0)$. Since $k_0^{(i)} \notin \R$ and $\overline{s_{22}^{A(i)}(\bar{k})} = s_{11}^{(i)}(k)$, we see that $\overline{k_0^{(i)}}$ is the only zero of $s_{22}^{A(i)}(k)$ in $D_\epsilon(k_0)$, and that $s_{11}^{(i)}(\overline{k_0^{(i)}}) \neq 0$ and $s_{22}^{A(i)}(k_0^{(i)}) \neq 0$. Thus, we may use (\ref{m22X1s11Y2}) evaluated at $k_0^{(i)}$ and at $\overline{k_0^{(i)}}$ to deduce that 
$$[X^{(i)}(x,k_0^{(i)})]_1 \cdot [(Y^A)^{(i)}(x,k_0^{(i)})]_1 = 0, \qquad
[Y^{(i)}(x,\overline{k_0^{(i)}})]_2 \cdot [X^{A(i)}(x,\overline{k_0^{(i)}})]_2 = 0.$$
Taking the limits of these relations as $i \to \infty$, we get
\begin{align}\label{scalarproducts1}
[X(x,k_0)]_1 \cdot [Y^A(x,k_0)]_1 = 0, \qquad
[Y(x,k_0)]_2 \cdot [X^A(x,k_0)]_2 = 0.
\end{align}
On the other hand, taking the limits of the relations $\det([Y^{(i)}]_2, [Y^{(i)}]_2, [Y^{(i)}]_3)(x,k_0) = 0$ and $\det([X^{(i)}]_1, [X^{(i)}]_1, [X^{(i)}]_3)(x,k_0) = 0$ as $i \to \infty$, we obtain 
\begin{align}\label{scalarproducts2}
[Y(x,k_0)]_2 \cdot [Y^A(x,k_0)]_1 = 0, \qquad [X(x,k_0)]_1 \cdot [X^A(x,k_0)]_2 = 0.
\end{align}
The relations (\ref{scalarproducts1}) and (\ref{scalarproducts2}) show that the vectors $[X(x,k_0)]_1$ and $[Y(x,k_0)]_2$ both lie in the kernel of the matrix $\mathsf{A}$ in (\ref{sfAdef}). If $\rank \mathsf{A} = 2$ at some $x_0$, then $[X(x_0,k_0)]_1$ and $[Y(x_0,k_0)]_2$ are linearly dependent at $x_0$. If $\rank \mathsf{A} < 2$ at some $x_0$, then the two rows of $\mathsf{A}$ are linearly dependent, implying that $w(x_0,k_0) =0$. But taking the residue of the third column of (\ref{MsymmR}) at $k_0$, recalling (\ref{M2M2Aexpressions}), and using that $w(x_0,k_0) =0$, we find
$$\begin{pmatrix}
\frac{(X_{21}Y_{32} - X_{31}Y_{22})(x_0,k_0)}{\dot{s}_{22}^A(k_0)}  \\[0.2cm]
\frac{(X_{31}Y_{12} - X_{11}Y_{32})(x_0,k_0)}{\dot{s}_{22}^A(k_0)}  \\[0.2cm]
\frac{(X_{11}Y_{22} - X_{21}Y_{12})(x_0,k_0)}{\dot{s}_{22}^A(k_0)}
\end{pmatrix} = 0,$$
so also in this case the vectors $[X(x_0,k_0)]_1$ and $[Y(x_0,k_0)]_2$ are linearly dependent.
Thus, in either case, the vectors $[X(x,k_0)]_1$ and $[Y(x,k_0)]_2$ are linearly dependent for $x = x_0$.
As above, it follows that they are linearly dependent for all $x$ and that (\ref{ck0def2}) holds for some $c_{k_0} \in \C$ independent of $x$.
This completes the proof of assertion ($\ref{Theorem2.3itemvi}$) of Theorem \ref{directth}.
 
It only remains to prove ($\ref{Theorem2.3itemvii}$). Since $i(\omega^2 k_0^2 - \omega) c_{k_0} \notin (-\infty,0)$ by Assumption \ref{solitonassumption}, ($\ref{Theorem2.3itemvii}$) will follow if we can show that $i(\omega^2 k_0^2 - \omega) c_{k_0}$ is real. In the case of compactly supported initial data, Lemma \ref{s13s31lemma} implies that $s_{13}$ and $s_{31}$ vanish at $k_0$ and hence, using the identity $X(x,k_{0})=Y(x,k_{0})e^{x\widehat{\mathcal{L}(k_{0})}}s(k_{0})$ (proved in \cite[Proposition 3.5]{CLmain}) evaluated at a point $x_{0}$ for which $X(x_{0},k_{0})=I$, we get
\begin{align*}
c_{k_{0}} = \frac{e^{-(l_{1}(k_{0})-l_{2}(k_{0}))x_{0}}Y_{12}(x_{0},k_{0})}{\dot{s}^{A}_{22}(k_{0})X_{11}(x_{0},k_{0})} = \frac{s^{A}_{21}(k_{0})}{\dot{s}^{A}_{22}(k_{0})} = -\frac{s_{12}(k_{0})}{\dot{s}_{11}(k_{0})},
\end{align*}
in agreement with (\ref{ck0compactdef}); thus the reality of $i(\omega^2 k_0^2 - \omega) c_{k_0}$ follows from Lemma \ref{s13s31lemma}.

In the general case, we proceed as follows. 
Considering the terms of $O((k-k_0)^{-1})$ and of $O(1)$ of the first and second columns of (\ref{MsymmR}), respectively, and using (\ref{M2M2Aexpressions}), we find
\begin{align}\label{YAXAQ}
& \overline{[Y^A(x,k_0)]_1} = \mathsf{Q}[Y(x,k_0)]_2,
\qquad
\overline{[X^A(x,k_0)]_2} = \overline{\tilde{r}(k_0)} \mathsf{Q} [X(x,k_0)]_1,
\end{align}
where $\mathsf{Q} = \mathsf{Q}(x,k_0)$ is short-hand notation for
$$\mathsf{Q} := \begin{pmatrix} 0 & 1 & 0 \\ \tilde{r}(k_0) & 0 & 0 \\ 0 & 0 & \tilde{r}(\frac{1}{\omega k_0})  \end{pmatrix} 
- \frac{2 \omega^2 k_0^2 u_0(x)}{(k_0^2 -1)(k_0^2 -\omega)}  \begin{pmatrix} 1 & 1 & 1 \\ 1 & 1 & 1 \\ 1 & 1 & 1 \end{pmatrix}.$$
Taking the complex conjugate of the relations in (\ref{YAXAQ}) and employing (\ref{ck0def2}), we obtain
\begin{align}\label{YAXAlinearlydependent}
\frac{[Y^A(x,k_0)]_1}{\dot{s}_{11}(k_0)} = \bar{c}_{k_0} \tilde{r}(k_0)^{-1} e^{(l_1(k_0)-l_2(k_0))x} [X^A(x,k_0)]_2\qquad \text{for all $x \in \R$},
\end{align}
where we have also used that $\overline{\dot{s}^{A}_{22}(k_{0})} = \dot{s}_{11}(k_{0})$ (which follows from \eqref{sRsymm}).
In particular, $[Y^A(x,k_0)]_1$ and $[X^A(x,k_0)]_2$ are linearly dependent, which implies that
\begin{align}\label{wzeroatk0}
w(x,k_0) = 0\qquad \text{for all $x \in \R$ if } k_0 \in \mathsf{Z} \cap \mathbb{R}.
\end{align}
In light of (\ref{M2M2Aexpressions}), this means that $[M(x,k)]_3$ is analytic at $k_0$. 
Let $u \times v = (u_2v_3 - u_3v_2,  u_3v_1 - u_1v_3, u_1 v_2 - u_2 v_1)$ denote the cross product of two vectors $u = ( u_1, u_2, u_3 ) \in \C^3$ and $v =  ( v_1, v_2, v_3 ) \in \C^3$. Taking the residue at $k_0$ of the identity
$$[M^A(x,k)]_1 = [M(x,k)]_2 \times [M(x,k)]_3$$
and using (\ref{ck0def2}), \eqref{M2M2Aexpressions}, $\dot{s}^{A}_{22}(k_{0}) = \overline{\dot{s}_{11}(k_{0})}$, $[M^A(x,k)]_2 = -[M(x,k)]_1 \times [M(x,k)]_3$, and (\ref{YAXAQ}), we find
\begin{align}\nonumber
& \frac{[Y^A(x,k_0)]_1}{\dot{s}_{11}(k_0)} 
= \frac{[Y(x,k_0)]_2}{\overline{\dot{s}_{11}(k_0)}} \times [M(x,k_0)]_3
= c_{k_0} e^{(l_1(k_0)-l_2(k_0))x} [X(x,k_0)]_1 \times [M(x,k_0)]_3
	\\ \label{YA1dots11}
&\quad = - c_{k_0} e^{(l_1(k_0)-l_2(k_0))x} [X^A(x,k_0)]_2 = -c_{k_0} e^{(l_1(k_0)-l_2(k_0))x} \tilde{r}(k_0) \bar{\mathsf{Q}} \overline{[X(x,k_0)]_1}.
\end{align}
On the other hand, by (\ref{ck0def2}) and (\ref{YAXAQ}),
$$\frac{\overline{[Y^A(x,k_0)]_1}}{\overline{\dot{s}_{11}(k_0)}} = \frac{\mathsf{Q}[Y(x,k_0)]_2}{\overline{\dot{s}_{11}(k_0)}} = \mathsf{Q}  c_{k_0} e^{(l_1(k_0)-l_2(k_0))x} [X(x,k_0)]_1.$$
We also have $l_1(k_0)-l_2(k_0) \in \R$. Thus the complex conjugate of (\ref{YA1dots11}) can be written as
$$\mathsf{Q} c_{k_0} [X(x,k_0)]_1 
= - \bar{c}_{k_0} \overline{\tilde{r}(k_0)} \mathsf{Q} [X(x,k_0)]_1.$$
Since $\det \mathsf{Q} = -\tilde{r}(k_0)\tilde{r}(\frac{1}{\omega k_0}) \neq 0$ and $[X(x,k_0)]_1$ is not identically zero, we conclude that $c_{k_0} =  -\bar{c}_{k_0} \overline{\tilde{r}(k_0)}$, which is equivalent to $i(\omega^2 k_0^2 - \omega) c_{k_0} \in \R$.
This completes the proof of Theorem  \ref{directth}.

\subsection{Construction of $M$ for $t > 0$}
Let us now consider the time-dependence of $M$. Given a Schwartz class solution of (\ref{boussinesqsystem}) on $\R \times [0,T)$,  $\{M_n(x,t,k)\}_{n=1}^6$ is defined by replacing $\mathsf{U}(x,0,k)$ with the time-dependent matrix $\mathsf{U}(x,t,k)$ in the integral equations (\ref{Mndef}). We let $M(x,t,k)$ be the sectionally meromorphic function which equals $M_n(x,t,k)$ for $k \in D_n$. 
We will prove that $M(x,t,k)$ satisfies the following $3 \times 3$-matrix RH problem. 

\begin{RHproblem}[RH problem for $M$]\label{RH problem for M}
Find $M(x,t,k)$ with the following properties:
\begin{enumerate}[$(a)$]
\item \label{RHMitema}
$M(x,t,\cdot) : \mathbb{C}\setminus (\Gamma \cup \hat{\mathsf{Z}}) \to \mathbb{C}^{3 \times 3}$ is analytic.

\item \label{RHMitemb}
The limits of $M(x,t,k)$ as $k$ approaches $\Gamma\setminus (\Gamma_\star \cup \mathcal{Q})$ from the left and right exist, are continuous on $\Gamma\setminus (\Gamma_\star \cup \mathcal{Q})$, and satisfy
\begin{align}\label{Mjumpcondition}
& M_{+}(x,t,k) = M_{-}(x,t,k)v(x,t,k), \qquad k \in \Gamma \setminus (\Gamma_\star \cup \mathcal{Q}),
\end{align}
where $v$ is defined by \eqref{vdef}.

\item \label{RHMitemc}
As $k \to \infty$, 
\begin{align}\label{asymp for M at infty in RH def}
M(x,t,k) = I + \frac{M^{(1)}(x,t)}{k} + \frac{M^{(2)}(x,t)}{k^{2}} + O\bigg(\frac{1}{k^3}\bigg),
\end{align}
where the matrices $M^{(1)}$ and $M^{(2)}$ depend on $x$ and $t$ but not on $k$, and satisfy
\begin{align}\label{singRHMatinftyb}
M_{12}^{(1)} = M_{13}^{(1)} = M_{12}^{(2)} + M_{21}^{(2)} = 0.
\end{align}

\item \label{RHMitemd}
There exist matrices $\{\mathcal{M}_2^{(l)}(x,t),\widetilde{\mathcal{M}}_2^{(l)}(x,t)\}_{l=-1}^{+\infty}$ depending on $x$ and $t$ but not on $k$ such that, for any $N \geq -1$,
\begin{align}\label{singRHMat0}
& M(x,t,k) = \sum_{l=-1}^{N} \mathcal{M}_2^{(l)}(x,t)(k-1)^{l} + O((k-1)^{N+1}) \qquad \text{as}\ k \to 1, \ k \in \bar{D}_2, \\
& M(x,t,k) = \sum_{l=-1}^{N} \widetilde{\mathcal{M}}_2^{(l)}(x,t)(k+1)^{l} + O((k+1)^{N+1}) \qquad \text{as}\ k \to -1, \ k \in \bar{D}_2.
\end{align}
Furthermore, there exist scalar coefficients $\alpha, \beta, \gamma, \tilde{\alpha}, \tilde{\beta}, \tilde{\gamma}$ depending on $x$ and $t$, but not on $k$, such that
\begin{align}\nonumber
& \mathcal{M}_{2}^{(-1)}(x,t) = \begin{pmatrix}
\alpha(x,t) & 0 & \beta(x,t) \\
-\alpha(x,t) & 0 & -\beta(x,t) \\
0 & 0 & 0
\end{pmatrix}, & & \mathcal{M}_{2}^{(0)}(x,t) = \begin{pmatrix}
\star & \gamma(x,t) & \star \\
\star & -\gamma(x,t) & \star \\
\star & 0 & \star
\end{pmatrix}, 
	\\ \label{mathcalMcoefficients}
& \widetilde{\mathcal{M}}_{2}^{(-1)}(x,t) = \begin{pmatrix}
\tilde{\alpha}(x,t) & 0 & \tilde{\beta}(x,t) \\
-\tilde{\alpha}(x,t) & 0 & -\tilde{\beta}(x,t) \\
0 & 0 & 0
\end{pmatrix}, & & \widetilde{\mathcal{M}}_{2}^{(0)}(x,t) = \begin{pmatrix}
\star & \tilde{\gamma}(x,t) & \star \\
\star & -\tilde{\gamma}(x,t) & \star \\
\star & 0 & \star
\end{pmatrix}.
\end{align}

\item \label{RHMiteme}
 $M$ satisfies the symmetries $M(x,t, k) = \mathcal{A} M(x,t,\omega k)\mathcal{A}^{-1} = \mathcal{B} M(x,t,\tfrac{1}{k})\mathcal{B}$ and
\begin{align*}
\overline{(M^{A})(x,t,\bar{k})} = \bigg\{ \frac{u(x,t)}{2}\begin{pmatrix}
1 & 1 & 1 \\
1 & 1 & 1 \\
1 & 1 & 1 
\end{pmatrix} + R(k)^{-1} \bigg\}M(x,t,k)R(k), \qquad k \in \C \setminus \Gamma,
\end{align*}
where $M^{A}=(M^{-1})^{T}$ and $R$ is given by \eqref{def of R}. 

\item \label{RHMitemf}
At each point of $\hat{\mathsf{Z}}$, two columns of $M$ are analytic while one column has (at most) a simple pole. Moreover, for each $k_{0}\in \mathsf{Z}\setminus \mathbb{R}$, 
\begin{align}\label{near k0 notin R}
\underset{k = k_0}{\res} [M(x,t,k)]_3 = c_{k_{0}} e^{-\theta_{31}(x,t,k_0)}[M(x,t,k_0)]_1,
\end{align}
and, for each $k_{0} \in \mathsf{Z}\cap \mathbb{R}$,
\begin{align}\label{near k0 in R}
\underset{k = k_0}{\res} [M(x,t,k)]_2 = c_{k_{0}} e^{-\theta_{21}(x,t,k_0)}[M(x,t,k_0)]_1.
\end{align}
\end{enumerate}
\end{RHproblem}

It can be shown that the conditions (\ref{singRHMatinftyb}) make the solution of RH problem \ref{RH problem for M} unique, but we will not need this fact.

\begin{proposition}\label{RHth}
Suppose $\{u(x,t), v(x,t)\}$ is a Schwartz class solution of (\ref{boussinesqsystem}) on $\R \times [0,T)$ with initial data $u_0, v_0 \in \mathcal{S}(\R)$ for some $T \in (0, +\infty]$ such that Assumptions \ref{solitonassumption} and \ref{originassumption} hold. Define $\{r_j(k)\}_1^2$ and $\{c_{k_0}\}_{k_0 \in \mathsf{Z}}$ in terms of $u_0, v_0$ by (\ref{r1r2def}) and \eqref{ck0def}. Define the sectionally meromorphic function $M$ by $M(x,t,k) = M_n(x,t,k)$ for $k \in D_n\setminus \hat{\mathsf{Z}}$.
Then $M(x,t,k)$ satisfies RH problem \ref{RH problem for M} for each $(x,t) \in \R \times [0,T)$ and the formulas 
\begin{align}\label{recoveruv}
\begin{cases}
 \displaystyle{u(x,t) = -i\sqrt{3}\frac{\partial}{\partial x}\lim_{k\to \infty}k\big[(M(x,t,k))_{33} - 1\big] = \frac{1-\omega}{2} \lim_{k\to \infty}k^{2}(M(x,t,k))_{32},}
	\vspace{.1cm}\\
 \displaystyle{v(x,t) = -i\sqrt{3}\frac{\partial}{\partial t}\lim_{k\to \infty}k\big[(M(x,t,k))_{33} - 1\big],}
\end{cases}
\end{align}
 expressing $\{u(x,t), v(x,t)\}$ in terms of $M$ are valid for all $(x,t) \in \R \times [0,T)$.
\end{proposition}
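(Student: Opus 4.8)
The plan is to follow the construction of \cite[Subsection 3.10]{CLmain}, inserting the time-dependence and keeping careful track of the poles at the points of $\hat{\mathsf{Z}}$ produced by the solitons. For fixed $t \in [0,T)$ the matrices $M_n(x,t,k)$ are defined by \eqref{Mndef} with $\mathsf{U}(x,0,k)$ replaced by $\mathsf{U}(x,t,k)$; since $\{u,v\}$ is a Schwartz class solution, $\mathsf{U}(\cdot,t,k)$ is Schwartz in $x$ uniformly for $t$ in compact subintervals, so the Fredholm and Volterra estimates of \cite{CLmain} apply verbatim and yield the time-dependent analogues of Proposition \ref{Mnprop} and of Lemmas \ref{Mjumplemma}--\ref{M1XYlemma}, with $u_0$ replaced by $u(\cdot,t)$ throughout. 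Combining these with Assumption \ref{solitonassumption} exactly as in the proof of Lemma \ref{QtildeQlemma} shows that the only singularities of $M$ in $k$ lie at $\hat{\mathcal{Q}}$ and $\hat{\mathsf{Z}}$, giving property \ref{RHMitema}; property \ref{RHMitemb} is the time-dependent form of Lemma \ref{Mjumplemma}; property \ref{RHMiteme} follows from the symmetries \eqref{Msymm} with $u_0 \mapsto u(\cdot,t)$; and properties \ref{RHMitemc}--\ref{RHMitemd}, including \eqref{singRHMatinftyb} and \eqref{mathcalMcoefficients}, are the time-dependent forms of \cite[Lemmas 3.12 and 3.18]{CLmain} together with Assumption \ref{originassumption}. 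None of these are affected by the solitons, since the $k_0 \in \mathsf{Z}$ sit at finite points away from $\Gamma$, from $\infty$, and from $\pm 1$.

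The genuinely new content is the residue condition \ref{RHMitemf}, which I would first verify at $t = 0$. By Lemma \ref{M1XYlemma}, $[M_2(x,0,k)]_1 = [X(x,0,k)]_1$, $[M_2(x,0,k)]_2 = [Y(x,0,k)]_2/s_{22}^A(k)$, and $[M_2(x,0,k)]_3 = w(x,0,k)/s_{11}(k)$ with $w$ as in \eqref{wdef}. For $k_0 \in \mathsf{Z}\setminus \R$ we have $s_{22}^A(k_0) \neq 0$ and $s_{11}$ has a simple zero at $k_0$ (Assumption \ref{solitonassumption}), so $[M]_1$ and $[M]_2$ are analytic at $k_0$ while $\res_{k=k_0}[M(x,0,k)]_3 = w(x,0,k_0)/\dot s_{11}(k_0)$, which by \eqref{ck0def1} equals $c_{k_0} e^{(l_1(k_0)-l_3(k_0))x}[X(x,0,k_0)]_1 = c_{k_0} e^{-\theta_{31}(x,0,k_0)}[M(x,0,k_0)]_1$. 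For $k_0 \in \mathsf{Z}\cap\R$, \eqref{wzeroatk0} gives $w(x,0,k_0)=0$, so the zero of $w$ cancels the simple zero of $s_{11}$ and $[M]_3$ is analytic at $k_0$; moreover $s_{22}^A$ has a simple zero at $k_0$ by \eqref{sRsymm}, hence $\res_{k=k_0}[M(x,0,k)]_2 = [Y(x,0,k_0)]_2/\dot s_{22}^A(k_0) = c_{k_0} e^{-\theta_{21}(x,0,k_0)}[M(x,0,k_0)]_1$ by \eqref{ck0def2}. This establishes \eqref{near k0 notin R} and \eqref{near k0 in R} at $t=0$.

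To extend the residue conditions to $t > 0$ I would use that $M$ satisfies the full Lax pair. Since $\{u,v\}$ solves \eqref{boussinesqsystem}, the standard argument (identical to the solitonless case, the poles at $\hat{\mathsf{Z}}$ causing no difficulty) shows that $M_t - [\mathcal{Z},M] - \mathsf{V}M$ solves the same homogeneous $x$-equation as $M$ with trivial behavior at $k = \infty$ and no worse singularities at $\hat{\mathsf{Z}}$, hence vanishes; thus each column obeys $\partial_x[M]_j = (\mathcal{L}(k)-l_j(k))[M]_j + \mathsf{U}[M]_j$ and $\partial_t[M]_j = (\mathcal{Z}(k)-z_j(k))[M]_j + \mathsf{V}[M]_j$, with coefficients analytic in $k$ near $k_0$. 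Fix $k_0 \in \mathsf{Z}\setminus\R$, set $\phi(x,t):=\res_{k=k_0}[M(x,t,k)]_3$ (a genuine residue, as $s_{11}$ is independent of $t$) and $\psi(x,t):=[M(x,t,k_0)]_1$. Using $\partial_x\theta_{31}(\cdot,\cdot,k_0)=l_3(k_0)-l_1(k_0)$ and $\partial_t\theta_{31}(\cdot,\cdot,k_0)=z_3(k_0)-z_1(k_0)$, a one-line computation shows that $e^{\theta_{31}(x,t,k_0)}\phi$ and $\psi$ solve the same (compatible, overdetermined) linear first-order system in $(x,t)$; since they agree along $t=0$ up to the factor $c_{k_0}$ by the previous paragraph and satisfy the same linear evolution in $t$, they agree up to that factor for all $(x,t)$, which is \eqref{near k0 notin R}, and in particular $c_{k_0}$ does not depend on $(x,t)$. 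For $k_0 \in \mathsf{Z}\cap\R$ the same propagation argument applied to $\res_{k=k_0}[M]_3$, which vanishes at $t=0$ by \eqref{wzeroatk0}, keeps it zero for all $t$ (so $[M]_3$ stays analytic there), and then the argument above with the third column and $\theta_{31}$ replaced by the second column and $\theta_{21}$ gives \eqref{near k0 in R}. Together with property \ref{RHMiteme}, this also produces all the residue conditions of RH problem \ref{RHn} at the remaining points of $\hat{\mathsf{Z}}$ once one sets $n=(1,1,1)M$.

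Finally, the representation formulas \eqref{recoveruv} follow as in \cite{CLmain}: substituting the large-$k$ expansion \eqref{asymp for M at infty in RH def} into $M_x-[\mathcal{L},M]=\mathsf{U}M$ and $M_t-[\mathcal{Z},M]=\mathsf{V}M$ and matching the first two orders in $1/k$ expresses $u$ and $v$ through $M^{(1)}$ and $M^{(2)}$; this is a purely local computation at $k=\infty$ that the solitons do not touch. The step I expect to be the main obstacle is property \ref{RHMitemf}: verifying that the singularity of $[M]_3$ is removable at real $k_0$ for all $t$, that the exponential factors collapse exactly to $e^{-\theta_{31}}$ and $e^{-\theta_{21}}$, and that $c_{k_0}$ acquires no $(x,t)$-dependence — all of which rest on $M$ satisfying the $t$-part of the Lax pair and on the $t=0$ identities \eqref{ck0def} proved in Theorem \ref{directth}.
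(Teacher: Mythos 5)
Your overall architecture matches the paper's: properties $(a)$--$(e)$ are delegated to the solitonless arguments of \cite{CLmain}, property $(f)$ is verified at $t=0$ from \eqref{ck0def} and \eqref{wzeroatk0} via Lemma \ref{M1XYlemma}, and the residue relations are then propagated in $t$ by a Lax-pair/ODE argument. The gap lies in the two facts you treat as standard or parenthetical, which are exactly what the paper's proof spends its effort establishing. First, your justification that $M_t-[\mathcal{Z},M]-\mathsf{V}M$ vanishes is not a proof: satisfying the homogeneous $x$-equation together with ``trivial behavior at $k=\infty$'' implies nothing (the function $M$ itself solves that same $x$-equation), and the vanishing has to come from the behavior as $x\to\pm\infty$ matched to the integral-equation structure; for the Fredholm system \eqref{Mndef} the homogeneous equation may admit nontrivial solutions, which is why the paper instead works with the Volterra eigenfunctions $X,Y$ after approximating $u(\cdot,t),v(\cdot,t)$ by compactly supported data, where the quantities $\chi_{X^{(i)}},\chi_{Y^{(i)}}$ solve homogeneous \emph{Volterra} equations and therefore vanish. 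Second, your parenthesis ``as $s_{11}$ is independent of $t$'', and your appeal to Lemma \ref{QtildeQlemma} at time $t$, presuppose that the time-$t$ scattering data still satisfies Assumption \ref{solitonassumption}, i.e.\ that the zeros of $s_{11}(\cdot\,;t)$ are exactly the points of $\mathsf{Z}$, are simple, and that $s_{22}^A(k_0;t)\neq 0$ for $k_0\in\mathsf{Z}\setminus\R$. Without this you do not know that $[M(x,t,\cdot)]_3$ has at most a simple pole at $k_0$ for $t>0$ (so that your $\phi$ is a genuine residue), nor that the remaining columns are analytic there. In the paper these facts are precisely the content of the evolution laws \eqref{time evolution of s}--\eqref{time evolution of sA}, which are derived from the $t$-part of the Lax pair for $X^{(i)},Y^{(i)}$; they are not free, and in the presence of solitons they are the heart of the proposition.

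Once those two points are supplied, your propagation step is sound and runs parallel to the paper's, only at the level of the columns of $M$ rather than at the level of $w$, $[X]_1$, $[Y]_2$: taking residues of the column equations at $k_0$ and comparing $e^{\theta_{31}}\,\underset{k=k_0}{\res}[M]_3$ with $c_{k_0}[M]_1$ (respectively, after first showing that $\underset{k=k_0}{\res}[M]_3=0$ persists for real $k_0$, comparing $e^{\theta_{21}}\,\underset{k=k_0}{\res}[M]_2$ with $c_{k_0}[M]_1$) reproduces \eqref{near k0 notin R}--\eqref{near k0 in R}, just as the paper propagates the relations for $e^{xl_3+tz_3}w$ and $e^{xl_1+tz_1}[X]_1$. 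So the proposal is repairable along the paper's lines, but as written the crux --- the time invariance of the pole data, equivalently the $t$-part of the Lax pair for the eigenfunctions obtained via the compact-support approximation --- is asserted rather than proved.
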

\begin{proof}
The fact that $M$ satisfies \eqref{recoveruv} and properties $(\ref{RHMitema})$--$(\ref{RHMiteme})$ of RH problem \ref{RH problem for M} can be proved as in \cite[Proof of Proposition 3.21]{CLmain}.

It remains to prove that $M$ satisfies property $(\ref{RHMitemf})$ of RH problem \ref{RH problem for M}. We define $X(x,t,k)$ by replacing $u_{0},v_{0}$ by $u(\cdot,t),v(\cdot,t)$ in the definition \eqref{def of X XA Y YA} of $X(x,k)$, and similarly for $Y(x,t,k), X^{A}(x,t,k), Y^{A}(x,t,k), s(k;t), s^{A}(k;t)$, and $w(x,t,k)$. 

We first establish a relation between $s(k;t)$ and $s(k)$. 
Let $u_0^{(i)}(\cdot, t), v_0^{(i)}(\cdot, t)$ be a sequence converging to $u(\cdot, t), v(\cdot, t)$ as in (\ref{uvsequence}), and let $X^{(i)}(x,t,k)$, $Y^{(i)}(x,t,k)$, $X^{A(i)}(x,t,k)$, $Y^{A(i)}(x,t,k)$, $s^{(i)}(k;t)$, $s^{A(i)}(k;t)$ be the associated eigenfunctions and spectral functions.
By replacing $u_{0},v_{0}$ in the statement of \cite[Proposition 3.5]{CLmain} by $(u^{(i)}(\cdot,t), v^{(i)}(\cdot,t))$, we obtain
\begin{align}\label{XYs t}
X^{(i)}(x,t,k)=Y^{(i)}(x,t,k)e^{x  \widehat{\mathcal{L}(k)}}s^{(i)}(k;t).
\end{align}
On the other hand, since $\{u(x,t), v(x,t)\}$ solves \eqref{boussinesqsystem}, the matrices $L$ and $Z$ in (\ref{UVLZdef}) satisfy the compatibility condition $L_{t}-Z_{x}+[L,Z]=0$. Thus $\partial_{t} \hat{X}^{(i)}-Z \hat{X}^{(i)}$ and $\partial_{t} \hat{Y}^{(i)}-Z \hat{Y}^{(i)}$, where $\hat{X}^{(i)}=X^{(i)}e^{\mathcal{L}x+\mathcal{Z}t}$ and $\hat{Y}^{(i)}=Y^{(i)}e^{\mathcal{L}x+\mathcal{Z}t}$, obey the equation $\hat{X}_{x} - \mathcal{L}\hat{X} = \mathsf{U}\hat{X}$. In other words, 
\begin{align*}
& \chi_{X^{(i)}} := (\partial_{t} \hat{X}^{(i)}-Z \hat{X}^{(i)})e^{-\mathcal{L}x-\mathcal{Z}t} = \partial_{t} X^{(i)} - [\mathcal{Z},X^{(i)}]-\mathsf{V}X^{(i)}, \\
& \chi_{Y^{(i)}} := (\partial_{t} \hat{Y}^{(i)}-Z \hat{Y}^{(i)})e^{-\mathcal{L}x-\mathcal{Z}t} = \partial_{t} Y^{(i)} - [\mathcal{Z},Y^{(i)}]-\mathsf{V}Y^{(i)}
\end{align*}
obey the $x$-part in (\ref{Xlax}). Since $\mathsf{V}$ has fast decay as $x \to \pm \infty$, $\chi_{X^{(i)}} \to 0$ as $x \to \infty$ and $\chi_{Y^{(i)}} \to 0$ as $x \to -\infty$. Hence $\chi_{X^{(i)}}$ and $\chi_{Y^{(i)}}$ satisfy the homogeneous versions of the first and third Volterra equations in \eqref{def of X XA Y YA}, respectively. Thus $\chi_{X^{(i)}}=\chi_{Y^{(i)}}=0$, i.e., $X^{(i)}$ and $Y^{(i)}$ also satisfy the $t$-part in (\ref{Xlax}). It follows that
\begin{align*}
X^{(i)}(x,t,k)e^{\mathcal{L}x+\mathcal{Z}t} = Y^{(i)}(x,t,k)e^{\mathcal{L}x+\mathcal{Z}t}Y^{(i)}(0,0,k)^{-1}X^{(i)}(0,0,k) = Y^{(i)}(x,t,k)e^{\mathcal{L}x+\mathcal{Z}t}s^{(i)}(k),
\end{align*}
where we have used \eqref{XYs t} with $x=t=0$ for the last equality. Combining the above with \eqref{XYs t}, we conclude that 
\begin{align*}
s^{(i)}(k;t) = e^{t  \widehat{\mathcal{Z}(k)}}s^{(i)}(k), \qquad k \in \mathbb{C}\setminus \hat{\mathcal{Q}}.
\end{align*}
Taking $i \to \infty$ (and using \cite[Proposition 3.5]{CLmain} to get the domains of definition of the entries of $s$), we arrive at
\begin{align}\label{time evolution of s}
s(k;t) = e^{t  \widehat{\mathcal{Z}(k)}}s(k) \qquad \mbox{for } k \in \begin{pmatrix}
 \omega^2 \hat{\mathcal{S}} & \hat{\Gamma}_{1} & \hat{\Gamma}_{3} \\
 \hat{\Gamma}_{1} & \omega \hat{\mathcal{S}} & \hat{\Gamma}_{5} \\
\hat{\Gamma}_{3} & \hat{\Gamma}_{5} & \hat{\mathcal{S}}
 \end{pmatrix}\setminus \hat{\mathcal{Q}}.
\end{align}
We can show similarly that 
\begin{align}\label{time evolution of sA}
s^{A}(k;t) = e^{t  \widehat{\mathcal{Z}(k)}}s^{A}(k)\qquad \mbox{for } k \in
 \begin{pmatrix}
 -\omega^2 \hat{\mathcal{S}} & \hat{\Gamma}_{4} & \hat{\Gamma}_{6} \\
\hat{\Gamma}_{4} & -\omega \hat{\mathcal{S}} & \hat{\Gamma}_{2} \\
\hat{\Gamma}_{6} & \hat{\Gamma}_{2} & -\hat{\mathcal{S}}
 \end{pmatrix}\setminus \hat{\mathcal{Q}}.
 \end{align}

If $k_0 \in \mathsf{Z}$, then (\ref{time evolution of s}) implies that $s_{11}(k_0;t) = s_{11}(k_0) = 0$ for all $t \in [0,T)$.
On the other hand, by Lemma \ref{M1XYlemma} (with $(u_{0},v_{0})$ replaced by $(u(\cdot,t), v(\cdot,t))$), we have 
\begin{align}\label{M2 at t}
M_2(x,t,k) = \begin{pmatrix} 
X_{11}(x,t,k) & \frac{Y_{12}(x,t,k)}{s_{22}^A(k;t)} & \frac{w_{1}(x,t,k)}{s_{11}(k;t)}  \\
X_{21}(x,t,k) & \frac{Y_{22}(x,t,k)}{s_{22}^A(k;t)} & \frac{w_{2}(x,t,k)}{s_{11}(k;t)}  \\
X_{31}(x,t,k) & \frac{Y_{32}(x,t,k)}{s_{22}^A(k;t)} & \frac{w_{3}(x,t,k)}{s_{11}(k;t)} 
\end{pmatrix}.
\end{align}
Since $[X]_1, [Y]_2, [X^A]_2, [Y^A]_1$ are analytic in $D_2$, this shows that $[M]_1$ is analytic at $k_0$ and that $[M]_{3}$ has (at most) a simple pole at each $k_0 \in \mathsf{Z}$. 

Suppose that $k_0 \in \mathsf{Z} \setminus \R$. Then (\ref{time evolution of sA}) and Assumption \ref{solitonassumption} imply that $s_{22}^A(k_0; t) = s_{22}^A(k_0) \neq 0$, and hence $[M]_2$ is analytic at $k_0$. To prove \eqref{near k0 notin R}, it is therefore enough to show that $\frac{w(x,t,k_{0})}{\dot{s}_{11}(k_0)} = c_{k_0} e^{-\theta_{31}(x,t,k_0)} [X(x,t,k_0)]_1$. This relation holds for $t = 0$ by the definition (\ref{ck0def1}) of $c_{k_0}$. Since the vectors $e^{x l_3(k_0) + t z_3(k_0)} w(x,t,k_{0})$ and $e^{x l_1(k_0)+ t z_1(k_0)} [X]_1(x,t,k_{0})$ satisfy the same ODEs in $x$ and $t$, it holds for all $x$ and $t$. This completes the proof of \eqref{near k0 notin R}.

Suppose now that $k_0 \in \mathsf{Z} \cap \R$. By virtue of (\ref{wzeroatk0}), $[M]_3$ is analytic at $k_0$. To prove \eqref{near k0 in R}, it is therefore enough to show that $\frac{[Y(x,t,k_0)]_2}{\dot{s}_{22}^A(k_0)} = c_{k_0} e^{-\theta_{21}(x,t,k_0)} [X(x,t,k_0)]_1$ for all $(x,t) \in \R \times [0, T)$. This relation holds for $t = 0$ by the definition (\ref{ck0def2}) of $c_{k_0}$. Since the vectors $e^{x l_2(k_0) + t z_2(k_0)} [Y(x,t,k_0)]_2$ and $e^{x l_1(k_0)+ t z_1(k_0)} [X(x,t,k_{0})]_1$ satisfy the same ODEs in $x$ and $t$, it holds for all $x$ and $t$. This completes the proof of \eqref{near k0 in R}.
\end{proof}

\subsection{Construction of $n$}\label{sec: n at t=0}

The following proposition can be proved in the same way as \cite[Proposition 3.22]{CLmain}.

\begin{proposition}[Time evolution of the scattering data]\label{reflectionprop}
Let $T \in (0, \infty]$ and suppose $\{u, v\}$ is a Schwartz class solution of \eqref{boussinesqsystem} on $\R \times [0,T)$ with initial data $u_0, v_0 \in \mathcal{S}(\R)$ such that Assumptions \ref{solitonassumption} and \ref{originassumption} hold.
Let $\{r_j(k)\}_1^2$ be the reflection coefficients associated to $\{u_0, v_0\}$ via (\ref{r1r2def}) and let $\{r_j(k; t)\}_1^2$ be the reflection coefficients associated to $\{u(\cdot,t), v(\cdot, t)\}$. Then
\begin{align}\label{rel between rj at different time}
r_1(k; t) = r_1(k) e^{-\theta_{21}(0,t,k)} \quad \text{and} \quad r_2(k; t) = r_2(k) e^{\theta_{21}(0,t,k)}.
\end{align}
The first and second identities in \eqref{rel between rj at different time} are valid for $k \in \hat{\Gamma}_{1}$ and $k \in \hat{\Gamma}_{4}\setminus \{\omega^{2}, -\omega^{2}\}$, respectively.
\end{proposition}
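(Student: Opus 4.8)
The strategy is to exploit the time-evolution of the scattering matrices $s(k;t)$ and $s^A(k;t)$ that was already established inside the proof of Proposition \ref{RHth}, namely \eqref{time evolution of s} and \eqref{time evolution of sA}, together with the symmetry relation $l_2(k)-l_1(k) = \theta_{21}(0,t,k)/t \cdot (1/z)$—more precisely, from \eqref{def of Phi ij} and \eqref{lmexpressions intro} one has $\theta_{21}(0,t,k) = (z_2(k)-z_1(k))t$, so the operator $e^{t\widehat{\mathcal{Z}(k)}}$ acts on the $(1,2)$ entry of a matrix by multiplication by $e^{(z_1(k)-z_2(k))t} = e^{-\theta_{21}(0,t,k)}$ and on the $(1,1)$ entry trivially. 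First I would recall that $r_1(k;t) = s_{12}(k;t)/s_{11}(k;t)$ and $r_2(k;t) = s^A_{12}(k;t)/s^A_{11}(k;t)$ by the definition \eqref{r1r2def} applied to the data $\{u(\cdot,t),v(\cdot,t)\}$; here one uses that Proposition \ref{RHth} guarantees $\{u(\cdot,t),v(\cdot,t)\} \in \mathcal{S}(\R)$ and that the relevant entries of $s(k;t)$, $s^A(k;t)$ are well-defined on the appropriate contours, with domains of definition read off from \cite[Propositions 3.5 and 3.9]{CLmain} exactly as in \eqref{time evolution of s}--\eqref{time evolution of sA}.

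The computation then proceeds as follows. Writing out \eqref{time evolution of s} entrywise gives $s_{11}(k;t) = s_{11}(k)$ and $s_{12}(k;t) = e^{(z_1(k)-z_2(k))t}s_{12}(k) = e^{-\theta_{21}(0,t,k)}s_{12}(k)$, valid for $k$ in the matrix of domains displayed in \eqref{time evolution of s}, in particular for $k \in \hat{\Gamma}_1$ where both the $(1,1)$ and $(1,2)$ entries are defined. Dividing, $r_1(k;t) = s_{12}(k;t)/s_{11}(k;t) = e^{-\theta_{21}(0,t,k)} r_1(k)$ for $k \in \hat{\Gamma}_1$, which is the first identity. Similarly, \eqref{time evolution of sA} gives $s^A_{11}(k;t) = s^A_{11}(k)$ and $s^A_{12}(k;t) = e^{(z_1(k)-z_2(k))t}s^A_{12}(k)$; but one must check the sign of the exponent carefully, because in \eqref{def of s sA} the matrix $s^A$ is built with $e^{+x\widehat{\mathcal{L}(k)}}$ rather than $e^{-x\widehat{\mathcal{L}(k)}}$—tracking this through the analogue of the argument in the proof of Proposition \ref{RHth} for $s^A$ yields the factor $e^{+\theta_{21}(0,t,k)}$, so $r_2(k;t) = e^{\theta_{21}(0,t,k)}r_2(k)$ for $k \in \hat{\Gamma}_4\setminus\{\omega^2,-\omega^2\}$ (the exceptional points being where $r_2$ has its poles, cf. Theorem \ref{directth}\eqref{Theorem2.3itemii}). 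Since the proof is essentially a transcription of \cite[Proposition 3.22]{CLmain} with the spectral functions now having the restricted domains forced by the lack of compact support, I would simply invoke that reference for the details and record only the domain bookkeeping.

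The only genuine subtlety—and hence the ``main obstacle''—is not the algebra but making sure every quotient that appears is taken at a point where \emph{both} numerator and denominator entries of the relevant scattering matrix are simultaneously defined and the denominator is non-vanishing. In the compactly supported case all entries are entire (away from $\hat{\mathcal{Q}}$) and this is automatic, but in the Schwartz setting one must stay on $\hat{\Gamma}_1$ for $s$ and on $\hat{\Gamma}_4$ for $s^A$, and one must know that $s_{11}$, $s^A_{11}$ do not vanish there: this is exactly what Assumption \ref{solitonassumption} provides, since it forces $\mathsf{Z}\cap\Gamma = \varnothing$ and $s_{11}$ nonzero on $\hat{\Gamma}_1\setminus\hat{\mathcal{Q}}$, $s^A_{11}$ nonzero on $\hat{\Gamma}_4\setminus\hat{\mathcal{Q}}$ (the latter after using the symmetries $s_{11}(k)=s_{11}(\omega/k)$ and $s_{11}^A(k)=\overline{s_{11}(\bar k^{-1})}$). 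With this in hand the identities \eqref{rel between rj at different time} follow on the stated contours, completing the proof.
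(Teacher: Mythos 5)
Your proposal follows essentially the same route as the paper, which simply proves the proposition ``in the same way as \cite[Proposition 3.22]{CLmain}'': one uses that $X,Y$ (and $X^A,Y^A$) also satisfy the $t$-part of the Lax pair, deduces the time evolution of the scattering matrices as in the proof of Proposition \ref{RHth}, and divides the relevant entries, with Assumption \ref{solitonassumption} guaranteeing the denominators do not vanish on $\hat\Gamma_1$ and $\hat\Gamma_4$; so the plan is correct in substance.

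One point you should tighten: your treatment of the sign for $r_2$ is internally inconsistent as written — you first transcribe $s^A_{12}(k;t)=e^{(z_1(k)-z_2(k))t}s^A_{12}(k)$ and then claim the factor $e^{+\theta_{21}(0,t,k)}=e^{(z_2(k)-z_1(k))t}$, and the appeal to ``tracking the sign'' does not actually resolve which is right. The clean way to pin it down is to note that $X^A,Y^A$ satisfy the adjoint Lax pair (with $-L^T$, $-Z^T$), so the analogue of the argument in the proof of Proposition \ref{RHth} gives $s^A(k;t)=e^{-t\widehat{\mathcal{Z}(k)}}s^A(k)$ (note the minus sign, which is also what consistency with \eqref{sRsymm} and \eqref{time evolution of s} forces), whence $s^A_{12}(k;t)=e^{\theta_{21}(0,t,k)}s^A_{12}(k)$ and $s^A_{11}(k;t)=s^A_{11}(k)$; alternatively, one can bypass $s^A$ altogether by applying the symmetry \eqref{r1r2 relation with kbar symmetry} at time $t$ together with the already-proved evolution of $r_1$. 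With that sign fixed, your argument yields exactly \eqref{rel between rj at different time} on the stated contours.
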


Define $n =(1,1,1)M$. It follows from (\ref{mathcalMcoefficients}) and the $\mathcal{A}$- and $\mathcal{B}$-symmetries $M(x,t, k) = \mathcal{A} M(x,t,\omega k)\mathcal{A}^{-1} = \mathcal{B} M(x,t,\tfrac{1}{k})\mathcal{B}$ that $n$ has no singularities at the points $\kappa_j$.

\begin{proposition}\label{prop:construction of n}
Let $T \in (0, \infty]$ and suppose $\{u(x,t), v(x,t)\}$ is a Schwartz class solution of \eqref{boussinesqsystem} on $\R \times [0,T)$ with initial data $u_0, v_0 \in \mathcal{S}(\R)$  such that Assumptions \ref{solitonassumption} and \ref{originassumption} hold.
Let $M(x,t,k)$ be defined as in the statement of Proposition \ref{RHth}, and let $n(x,t,k):=(1,1,1)M(x,t,k)$. Then $n$ solves RH problem \ref{RHn}. Moreover,
\begin{align}\nonumber
& u(x,t) := -i\sqrt{3} \frac{\partial}{\partial x} n_3^{(1)}(x,t),
	\\ \label{uvdef}
& v(x,t) := -i\sqrt{3} \frac{\partial}{\partial x}\bigg(n_{3}^{(2)}(x,t) + u(x,t) + \frac{1}{6} \bigg( \int_{\infty}^{x}u(x',t)dx' \bigg)^{2}\bigg),
\end{align}
where $n^{(1)}$ and $n^{(2)}$ are defined through the expansion 
\begin{align}\label{expansion of n at inf}
n(x,t,k) = (1,1,1) + n^{(1)}(x,t) \, k^{-1} + n^{(2)}(x,t) \, k^{-2} + O(k^{-3}) \qquad \mbox{as } k \to \infty.
\end{align}
\end{proposition}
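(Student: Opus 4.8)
The plan is to transfer RH problem \ref{RH problem for M}, which $M$ satisfies by Proposition \ref{RHth}, to RH problem \ref{RHn} via the row vector $n = (1,1,1)M$, and then to deduce the reconstruction formulas \eqref{uvdef} from \eqref{recoveruv} using the normalization \eqref{singRHMatinftyb} and the Lax pair.

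First I would verify properties $(\ref{RHnitema})$--$(\ref{RHniteme})$ of RH problem \ref{RHn}. Analyticity off $\Gamma\cup\hat{\mathsf{Z}}$ and the jump relation $n_\pm = (1,1,1)M_\pm$, $n_+ = n_- v$, are immediate from $(\ref{RHMitema})$--$(\ref{RHMitemb})$; continuity of $n$ across the points $\mathcal{Q}\subset\Gamma$ holds because, by \eqref{mathcalMcoefficients} together with the $\mathcal{A}$- and $\mathcal{B}$-symmetries of $M$, left multiplication by $(1,1,1)$ removes the would-be simple poles of $M$ at the $\kappa_j$ (this is the observation recorded just before the statement of the proposition). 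The $O(1)$ bound at $\Gamma_\star\setminus\{0\}$ follows from the boundedness of $M$ away from $\hat{\mathcal{Q}}\cup\hat{\mathsf{Z}}$, and at $k=0$ from the $\mathcal{B}$-symmetry $n(x,t,k)=n(x,t,k^{-1})\mathcal{B}$ together with $(\ref{RHniteme})$ (so $n(x,t,k)\to(1,1,1)\mathcal{B}=(1,1,1)$ as $k\to0$). Since $\mathcal{A},\mathcal{B}$ are permutation matrices with $(1,1,1)\mathcal{A} = (1,1,1)\mathcal{B} = (1,1,1)$, the symmetries \eqref{nsymm} are inherited from those of $M$, e.g.\ $n(x,t,k)=(1,1,1)\mathcal{A}M(x,t,\omega k)\mathcal{A}^{-1}=n(x,t,\omega k)\mathcal{A}^{-1}$; and $(\ref{RHniteme})$ is immediate from \eqref{asymp for M at infty in RH def}.

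For property $(\ref{RHnitemf})$, at each point of $\hat{\mathsf{Z}}$ exactly one column of $M$ has at most a simple pole by $(\ref{RHMitemf})$, hence so does exactly one entry of $n$. For $k_0\in\mathsf{Z}\setminus\R$, multiplying \eqref{near k0 notin R} on the left by $(1,1,1)$ yields $\res_{k=k_0}n_3 = c_{k_0}e^{-\theta_{31}(k_0)}n_1(k_0)$, the first relation in \eqref{nresiduesk0}; the relations at $\omega k_0,\omega^2 k_0,k_0^{-1},\omega k_0^{-1},\omega^2 k_0^{-1}$ then follow from \eqref{nsymm} --- which read $n_j(\omega k)=n_{j+1}(k)$ and $n_1(k^{-1})=n_2(k),\ n_2(k^{-1})=n_1(k),\ n_3(k^{-1})=n_3(k)$ (indices mod $3$) --- upon tracking the Jacobian factors $\omega$ and $-k_0^{-2}$ of $k\mapsto\omega k$ and $k\mapsto k^{-1}$. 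The relations on the conjugate orbit $\bar k_0,\omega\bar k_0,\dots$ come from the $R$-symmetry \eqref{MsymmR}: from \eqref{near k0 notin R} and the cofactor identities $[M^A]_1=[M]_2\times[M]_3$, $[M^A]_2=-[M]_1\times[M]_3$, $[M^A]_3=[M]_1\times[M]_2$, the matrix $M^A$ has a pole at $k_0$ only in its first column, with $\res_{k_0}[M^A]_1 = -c_{k_0}e^{-\theta_{31}(k_0)}[M^A]_3(k_0)$; substituting this into \eqref{MsymmR} near $k=\bar k_0$, where $R$ and $\tfrac{u}{2}\mathbb{1}+R^{-1}$ are analytic and invertible, and using $\overline{e^{-\theta_{31}(x,t,k_0)}}=e^{\theta_{32}(x,t,\bar k_0)}$ (which follows from $\overline{l_j(k_0)}=-l_{-j}(\bar k_0)$ and $\overline{z_j(k_0)}=-z_{-j}(\bar k_0)$, indices mod $3$) together with the value of $R_{33}/R_{21}$ at $\bar k_0$, one reads off $\res_{\bar k_0}[M]_2 = d_{k_0}e^{\theta_{32}(\bar k_0)}[M]_3(\bar k_0)$ with $d_{k_0}$ exactly as in \eqref{dk0def}; the remaining points of the orbit follow again from \eqref{nsymm}. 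The case $k_0\in\mathsf{Z}\cap\R$ is identical with $\theta_{21}$ in place of $\theta_{31}$ and with no separate conjugate orbit (since $\bar k_0=k_0$), giving \eqref{nresiduesk0real}. Obtaining $d_{k_0}$ correctly --- carefully combining the $R$-symmetry, the cross-product structure of $M^A$, and the conjugation of $l_j,z_j$ --- is the main technical point of the argument.

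Finally, for \eqref{uvdef} I would follow the solitonless computation of \cite{CLmain}. The $\mathcal{A}$-symmetry of $M$ gives $M_{33}(k)=M_{22}(\omega k)=M_{11}(\omega^2 k)$ and $M_{13}(k)=M_{32}(\omega k)$, $M_{23}(k)=M_{12}(\omega k)$; expanding at $k=\infty$ and using $M_{12}^{(1)}=M_{13}^{(1)}=0$ from \eqref{singRHMatinftyb} gives $M_{13}^{(1)}=M_{23}^{(1)}=0$, so $n_3^{(1)}=M_{13}^{(1)}+M_{23}^{(1)}+M_{33}^{(1)}=M_{33}^{(1)}$. With \eqref{recoveruv} this yields $u=-i\sqrt{3}\,\partial_x n_3^{(1)}$, the first line of \eqref{uvdef}. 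Since $u,v$ are Schwartz, $n_3^{(1)}\to0$ as $x\to\infty$, whence $\int_\infty^x u\,dx' = -i\sqrt{3}\,n_3^{(1)}$ and $\tfrac16\big(\int_\infty^x u\,dx'\big)^2 = -\tfrac12(n_3^{(1)})^2$, so the second line of \eqref{uvdef} is equivalent to $\partial_t n_3^{(1)} = \partial_x\big(n_3^{(2)}+u-\tfrac12(n_3^{(1)})^2\big)$. This identity is obtained by inserting $M = I + M^{(1)}k^{-1} + M^{(2)}k^{-2} + O(k^{-3})$ into the $x$- and $t$-parts of the Lax pair \eqref{Xlax} (which $M$ satisfies) and matching powers of $k$, which expresses $u,v$ in terms of $M^{(1)},M^{(2)}$ and produces the compatibility relations among the $M^{(j)}$; translating the relevant entries via $n_3=M_{13}+M_{23}+M_{33}$ and the $\mathcal{A}$-symmetry gives the claim. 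This part is a purely local computation at $k=\infty$, away from $\hat{\mathsf{Z}}$, and so is unaffected by the presence of solitons.
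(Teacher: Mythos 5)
Your proposal is correct and takes essentially the same route as the paper, whose proof simply invokes Proposition \ref{RHth} and refers to the solitonless argument of \cite[Proof of Proposition 3.23]{CLmain}. You flesh out precisely those steps: transferring properties $(\ref{RHnitema})$--$(\ref{RHniteme})$ via $n=(1,1,1)M$, deriving the residue conditions at the full orbit (including the conjugate-orbit constant, where your use of the $R$-symmetry \eqref{MsymmR}, the cofactor structure of $M^A$, the conjugation identity $\overline{\theta_{31}(k_0)}=-\theta_{32}(\bar{k}_0)$, and the ratio $R_{33}/R_{21}$ indeed reproduces $d_{k_0}$ as in \eqref{dk0def}), and obtaining \eqref{uvdef} from the large-$k$ expansion of the Lax pair exactly as in the solitonless case.
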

\begin{proof}
The proof is based on Proposition \ref{RHth} and is similar to \cite[Proof of Proposition 3.23]{CLmain}.
\end{proof}

\section{The inverse problem}\label{inversesec}
Instead of working directly with $n$, we will work with an equivalent RH problem, whose solution we denote by $\tilde{n}$. The transformation $n \to \tilde{n}$ replaces the residue conditions \eqref{near k0 notin R} and \eqref{near k0 in R} with jump conditions on small circles. We present the RH problem for $\tilde{n}$ in Section \ref{subsec:RHP ntilde}. In Section \ref{vanishinglemmasubsec}, we establish a vanishing lemma for the RH problem for $\tilde{n}$.  In Section \ref{Dsingsubsec}, we explain where the proof of the vanishing lemma breaks down if $s_{11}$ has zeros in $D_{\mathrm{sing}}$.
The proof of Theorem \ref{inverseth} is then given in Section \ref{inversethsubsec}.

\subsection{The $n \to \tilde{n}$ transformation}\label{subsec:RHP ntilde} The $n \to \tilde{n}$ transformation is defined as follows. For each $k_{0}\in \mathsf{Z}$, we let $D_{\epsilon}(k_{0})$ be a small open disk centered at $k_0$ of radius $\epsilon>0$. We let $D_{\epsilon}(k_{0})^{-1}$, $D_{\epsilon}(k_{0})^*$, and $D_{\epsilon}(k_{0})^{-*}$ be the images of $D_{\epsilon}(k_{0})$ under the maps $k \mapsto k^{-1}$, $k \mapsto \bar{k}$, and $k \mapsto \bar{k}^{-1}$, respectively. If $k_{0} \in \mathbb{R}$, then $D_{\epsilon}(k_{0})^*=D_{\epsilon}(k_{0})$. We assume that $\partial D_{\epsilon}(k_{0})$, $\partial D_{\epsilon}(k_{0})^*$ are oriented counterclockwise, and that $\partial D_{\epsilon}(k_{0})^{-1}$, $\partial D_{\epsilon}(k_{0})^{-*}$ are oriented clockwise. Let 
\begin{align}
& \mathcal{D} = \bigcup_{k_{0}\in \mathsf{Z}}\bigcup_{j=0,1,2} \bigg( \omega^{j} D_{\epsilon}(k_{0}) \cup \omega^{j}  D_{\epsilon}(k_{0})^{-1} \cup \omega^{j} D_{\epsilon}(k_{0})^{*} \cup \omega^{j}  D_{\epsilon}(k_{0})^{-*}\bigg), \label{def of Dcal} \\
& \partial \mathcal{D} = \bigcup_{k_{0}\in \mathsf{Z}}\bigcup_{j=0,1,2} \bigg( \omega^{j} \partial D_{\epsilon}(k_{0}) \cup \omega^{j}  \partial D_{\epsilon}(k_{0})^{-1} \cup \omega^{j} \partial D_{\epsilon}(k_{0})^{*} \cup \omega^{j}  \partial D_{\epsilon}(k_{0})^{-*}\bigg). \label{def of partial Dcal}
\end{align}
Since $k \mapsto k^{-1}$ is a M\"obius transformation, it maps the circle $\partial D_{\epsilon}(k_{0})$ to a circle; hence $\partial \mathcal{D}$ is the union of $6 |\mathsf{Z}\cap \mathbb{R}| + 12 |\mathsf{Z}\setminus \mathbb{R}|$ small circles.
 We choose $\epsilon>0$ sufficiently small such that none of these circles intersect each other, and such that they do not intersect $\Gamma$. Define the contour $\tilde{\Gamma}$ by (see Figure \ref{fig: Gammat})
\begin{align*}
& \tilde{\Gamma} =  \Gamma \cup \partial \mathcal{D}.
\end{align*}
\begin{figure}
\begin{center}
\begin{tikzpicture}[scale=0.9]
\node at (0,0) {};
\draw[black,line width=0.45 mm,->-=0.4,->-=0.85] (0,0)--(30:4);
\draw[black,line width=0.45 mm,->-=0.4,->-=0.85] (0,0)--(90:4);
\draw[black,line width=0.45 mm,->-=0.4,->-=0.85] (0,0)--(150:4);
\draw[black,line width=0.45 mm,->-=0.4,->-=0.85] (0,0)--(-30:4);
\draw[black,line width=0.45 mm,->-=0.4,->-=0.85] (0,0)--(-90:4);
\draw[black,line width=0.45 mm,->-=0.4,->-=0.85] (0,0)--(-150:4);

\draw[black,line width=0.45 mm] ([shift=(-180:2.5cm)]0,0) arc (-180:180:2.5cm);
\draw[black,arrows={-Triangle[length=0.2cm,width=0.18cm]}]
($(3:2.5)$) --  ++(90:0.001);
\draw[black,arrows={-Triangle[length=0.2cm,width=0.18cm]}]
($(57:2.5)$) --  ++(-30:0.001);
\draw[black,arrows={-Triangle[length=0.2cm,width=0.18cm]}]
($(123:2.5)$) --  ++(210:0.001);
\draw[black,arrows={-Triangle[length=0.2cm,width=0.18cm]}]
($(177:2.5)$) --  ++(90:0.001);
\draw[black,arrows={-Triangle[length=0.2cm,width=0.18cm]}]
($(243:2.5)$) --  ++(330:0.001);
\draw[black,arrows={-Triangle[length=0.2cm,width=0.18cm]}]
($(297:2.5)$) --  ++(210:0.001);

\draw[black,line width=0.15 mm] ([shift=(-30:0.55cm)]0,0) arc (-30:30:0.55cm);

\node at (0.8,0) {$\tiny \frac{\pi}{3}$};

\node at (-5.5:2.77) {\footnotesize $\Gamma_8$};
\node at (71:2.72) {\footnotesize $\Gamma_9$};
\node at (112:2.73) {\footnotesize $\Gamma_7$};
\node at (193:2.71) {\footnotesize $\Gamma_8$};
\node at (234:2.71) {\footnotesize $\Gamma_9$};
\node at (311:2.75) {\footnotesize $\Gamma_7$};

\node at (77:1.45) {\footnotesize $\Gamma_1$};
\node at (160:1.45) {\footnotesize $\Gamma_2$};
\node at (-162:1.45) {\footnotesize $\Gamma_3$};
\node at (-77:1.45) {\footnotesize $\Gamma_4$};
\node at (-40:1.45) {\footnotesize $\Gamma_5$};
\node at (43:1.45) {\footnotesize $\Gamma_6$};

\node at (84:3.3) {\footnotesize $\Gamma_4$};
\node at (155:3.3) {\footnotesize $\Gamma_5$};
\node at (-156:3.3) {\footnotesize $\Gamma_6$};
\node at (-84:3.3) {\footnotesize $\Gamma_1$};
\node at (-35:3.3) {\footnotesize $\Gamma_2$};
\node at (35:3.3) {\footnotesize $\Gamma_3$};


\draw[blue, line width=0.45 mm] (0:3.3) circle (0.4);
\draw[blue,arrows={-Triangle[length=0.2cm,width=0.18cm]}]
($(0:3.7)+(0,0.15)$) --  ++(90:0.001);

\draw[blue, line width=0.45 mm] (120:3.3) circle (0.4);
\draw[blue,arrows={-Triangle[length=0.2cm,width=0.18cm]}]
($(120:3.7)+(-0.866*0.15,-0.5*0.15)$) --  ++(120+90:0.001);

\draw[blue, line width=0.45 mm] (-120:3.3) circle (0.4);
\draw[blue,arrows={-Triangle[length=0.2cm,width=0.18cm]}]
($(-120:3.7)+(0.866*0.15,-0.5*0.15)$) --  ++(-120+90:0.001);



\draw[blue, line width=0.45 mm] (0:2.5^2/3.3+0.025) circle (0.25);
\draw[blue,arrows={-Triangle[length=0.2cm,width=0.18cm]}]
($(0:2.5^2/3.3+0.025-0.25)+(0.015,0.12)$) --  ++(91:0.001);

\draw[blue, line width=0.45 mm] (120:2.5^2/3.3+0.025) circle (0.25);
\draw[blue,arrows={-Triangle[length=0.2cm,width=0.18cm]}]
($(120:2.5^2/3.3+0.025-0.25)+(-0.866*0.12,-0.5*0.12)$) --  ++(120+92:0.001);

\draw[blue, line width=0.45 mm] (-120:2.5^2/3.3+0.025) circle (0.25);
\draw[blue,arrows={-Triangle[length=0.2cm,width=0.18cm]}]
($(-120:2.5^2/3.3+0.025-0.25)+(0.866*0.12,-0.5*0.12)$) --  ++(-120+92:0.001);



\draw[green, line width=0.45 mm] (19:3) circle (0.4);
\draw[green,arrows={-Triangle[length=0.2cm,width=0.18cm]}]
($(19:3)+(0.4,0.15)$) --  ++(90:0.001);
\draw[green, line width=0.45 mm] (-19:3) circle (0.4);
\draw[green,arrows={-Triangle[length=0.2cm,width=0.18cm]}]
($(-19:3)+(0.4,0.15)$) --  ++(90:0.001);

\draw[green, line width=0.45 mm] (120+19:3) circle (0.4);
\draw[green,arrows={-Triangle[length=0.2cm,width=0.18cm]}]
($(120+19:3)+(120:0.4)+(-0.866*0.15,-0.5*0.15)$) --  ++(120+90:0.001);
\draw[green, line width=0.45 mm] (120-19:3) circle (0.4);
\draw[green,arrows={-Triangle[length=0.2cm,width=0.18cm]}]
($(120-19:3)+(120:0.4)+(-0.866*0.15,-0.5*0.15)$) --  ++(120+90:0.001);

\draw[green, line width=0.45 mm] (-120+19:3) circle (0.4);
\draw[green,arrows={-Triangle[length=0.2cm,width=0.18cm]}]
($(-120+19:3)+(-120:0.4)+(0.866*0.15,-0.5*0.15)$) --  ++(-120+90:0.001);
\draw[green, line width=0.45 mm] (-120-19:3) circle (0.4);
\draw[green,arrows={-Triangle[length=0.2cm,width=0.18cm]}]
($(-120-19:3)+(-120:0.4)+(0.866*0.15,-0.5*0.15)$) --  ++(-120+90:0.001);



\draw[green, line width=0.45 mm] (-19:2.12) circle (0.29);
\draw[green,arrows={-Triangle[length=0.2cm,width=0.18cm]}]
($(-19:2.12)-(0.28,0)+(0,0.15)$) --  ++(90:0.001);
\draw[green, line width=0.45 mm] (19:2.12) circle (0.29);
\draw[green,arrows={-Triangle[length=0.2cm,width=0.18cm]}]
($(19:2.12)-(0.28,0)+(0,0.15)$) --  ++(90:0.001);

\draw[green, line width=0.45 mm] (-19+120:2.12) circle (0.29);
\draw[green,arrows={-Triangle[length=0.2cm,width=0.18cm]}]
($(-19+120:2.12)-(120:0.28)+(-0.866*0.15,-0.5*0.15)$) --  ++(90+120:0.001);
\draw[green, line width=0.45 mm] (19+120:2.12) circle (0.29);
\draw[green,arrows={-Triangle[length=0.2cm,width=0.18cm]}]
($(19+120:2.12)-(120:0.28)+(-0.866*0.15,-0.5*0.15)$) --  ++(90+120:0.001);

\draw[green, line width=0.45 mm] (-19-120:2.12) circle (0.29);
\draw[green,arrows={-Triangle[length=0.2cm,width=0.18cm]}]
($(-19-120:2.12)-(-120:0.28)+(0.866*0.15,-0.5*0.15)$) --  ++(90-120:0.001);
\draw[green, line width=0.45 mm] (19-120:2.12) circle (0.29);
\draw[green,arrows={-Triangle[length=0.2cm,width=0.18cm]}]
($(19-120:2.12)-(-120:0.28)+(0.866*0.15,-0.5*0.15)$) --  ++(90-120:0.001);



\draw[red, line width=0.45 mm] (180:1.9) circle (0.4);
\draw[red,arrows={-Triangle[length=0.2cm,width=0.18cm]}]
($(180:1.5)+(0,0.15)$) --  ++(90:0.001);

\draw[red, line width=0.45 mm] (180+120:1.9) circle (0.4);
\draw[red,arrows={-Triangle[length=0.2cm,width=0.18cm]}]
($(180+120:1.5)+(-0.866*0.15,-0.5*0.15)$) --  ++(120+90:0.001);

\draw[red, line width=0.45 mm] (180-120:1.9) circle (0.4);
\draw[red,arrows={-Triangle[length=0.2cm,width=0.18cm]}]
($(180-120:1.5)+(0.866*0.15,-0.5*0.15)$) --  ++(-120+90:0.001);



\draw[red, line width=0.45 mm] (180:3.44203) circle (0.724638);
\draw[red,arrows={-Triangle[length=0.2cm,width=0.18cm]}]
($(180:4.1667)+(0,0.1)$) --  ++(91:0.001);

\draw[red, line width=0.45 mm] (180+120:3.44203) circle (0.724638);
\draw[red,arrows={-Triangle[length=0.2cm,width=0.18cm]}]
($(180+120:4.1667)+(-0.866*0.1,-0.5*0.1)$) --  ++(120+91:0.001);

\draw[red, line width=0.45 mm] (180-120:3.44203) circle (0.724638);
\draw[red,arrows={-Triangle[length=0.2cm,width=0.18cm]}]
($(180-120:4.1667)+(0.866*0.1,-0.5*0.1)$) --  ++(-120+91:0.001);

\end{tikzpicture}
\end{center}
\begin{figuretext}\label{fig: Gammat}
The contour $\tilde{\Gamma} = \Gamma \cup \partial \mathcal{D}$ in the complex $k$-plane, in a case where $|\mathsf{Z}\cap (1,\infty)| =1$ (i.e. one right-moving soliton, corresponding to blue circles), $|\mathsf{Z}\cap (-1,0)| = 1$ (i.e. one left-moving soliton, corresponding to red circles), $|\mathsf{Z}\cap D_{\mathrm{reg}}^{R}|=1$ (i.e. one right-moving breather, corresponding to green circles), and $|\mathsf{Z}\cap D_{\mathrm{reg}}^{L}| =0$ (no left-moving breather).
\end{figuretext}
\end{figure}
  
The function $\tilde{n}(x,t,k)$ differs from $n(x,t,k)$ only for $k \in \mathcal{D}$, and for $k \in \mathcal{D}$ we define $\tilde{n}(x,t,k)$ as follows. If $k_{0} \in \mathsf{Z}\setminus \mathbb{R}$, we define $\tilde{n}$ for $k \in D_{\epsilon}(k_{0}) \cup D_{\epsilon}(k_{0})^*$ by
\begin{align}\label{def of ntildeC}
\tilde{n}(x,t,k) = \begin{cases} 
n(x,t,k) Q_1(x,t,k), \quad & k \in D_{\epsilon}(k_{0}), \; k_{0} \in \mathsf{Z}\setminus \mathbb{R},
	\\
n(x,t,k) Q_7(x,t,k), & k \in D_{\epsilon}(k_{0})^*, \; k_{0} \in \mathsf{Z}\setminus \mathbb{R},
\end{cases}
\end{align}
where
\begin{align}\label{Q1def}
& Q_1 := \begin{pmatrix} 1 & 0 & -\frac{c_{k_0} e^{-\theta_{31}(x,t,k_0)}}{k- k_0} \frac{k^2-\omega}{k_0^2-\omega} \\ 0 & 1 & 0 \\ 0 & 0 & 1 \end{pmatrix}, \quad
 Q_{7} := \begin{pmatrix} 1 & 0 & 0 \\ 0 & 1 & 0 \\ 0 & -\frac{\bar{c}_{k_0} e^{\theta_{32}(x,t,\bar{k}_0)}}{k- \bar{k}_0} \frac{k^2-1}{\omega^{2}(\omega^{2}-\bar{k}_0^2)}  & 1 \end{pmatrix}, 
\end{align}
and if $k_{0} \in \mathsf{Z} \cap \mathbb{R}$, we define $\tilde{n}$ for $k \in D_{\epsilon}(k_{0})$ by
\begin{align}\label{def of ntildeR}
\tilde{n}(x,t,k) = n(x,t,k) P_1(x,t,k), \qquad k \in D_{\epsilon}(k_{0}), \; k_{0} \in \mathsf{Z}\cap \mathbb{R},
\end{align}
where
\begin{align}\label{P1def}
& P_1(x,t,k) := \begin{pmatrix} 1 & -\frac{c_{k_0} e^{-\theta_{21}(x,t,k_0)}}{k- k_0} \frac{k^2-\omega}{k_0^2-\omega} & 0 \\ 0 & 1 & 0 \\ 0 & 0 & 1 \end{pmatrix}.
\end{align}
We then extend $\tilde{n}$ to all of $\mathcal{D}$ by means of the symmetries
\begin{align}\label{ntsymm}
\tilde{n}(x,t, k) = \tilde{n}(x,t,\omega k)\mathcal{A}^{-1} = \tilde{n}(x,t,k^{-1})\mathcal{B}.
\end{align}

The function $n$ satisfies RH problem \ref{RHn} if and only if $\tilde{n}$ satisfies the following RH problem.
\begin{RHproblem}[RH problem for $\tilde{n}$]\label{RH problem for ntilde}
Find $\tilde{n}(x,t,k)$ with the following properties:
\begin{enumerate}[(a)]
\item $\tilde{n}(x,t,\cdot) : \mathbb{C}\setminus \tilde{\Gamma} \to \mathbb{C}^{1 \times 3}$ is analytic.

\item The limits of $\tilde{n}(x,t,k)$ as $k$ approaches $\tilde{\Gamma}\setminus (\Gamma_\star \cup \mathcal{Q})$ from the left and right exist, are continuous on $\tilde{\Gamma}\setminus (\Gamma_\star \cup \mathcal{Q})$, and satisfy
\begin{align}\label{Mtjumpcondition}
& \tilde{n}_{+}(x,t,k) = \tilde{n}_{-}(x,t,k)v(x,t,k), \qquad k \in \tilde{\Gamma} \setminus (\Gamma_\star \cup \mathcal{Q}),
\end{align}
where the jump matrix $v$ is defined by \eqref{vdef} for $k \in \Gamma$, and $v$ is defined on $\partial \mathcal{D}$ by setting
\begin{align}\label{vregdef}
v(x,t,k) = \begin{cases}
Q_1(x,t,k), & k \in \partial D_{\epsilon}(k_{0}), \; k_{0} \in \mathsf{Z}\setminus \mathbb{R},
    \\
Q_7(x,t,k), & k \in \partial D_{\epsilon}(k_{0})^*, \; k_{0} \in \mathsf{Z}\setminus \mathbb{R},
	\\
P_1(x,t,k), & k \in \partial D_{\epsilon}(k_{0}), \; k_{0} \in \mathsf{Z}\cap \mathbb{R},
\end{cases}
\end{align}
and then extending it to all of $\partial \mathcal{D}$ by means of the symmetries
$$v(x,t,k) = \mathcal{A} v(x,t,\omega k) \mathcal{A}^{-1}
= \mathcal{B}v(x,t,k^{-1})^{-1}\mathcal{B}.$$

\item $\tilde{n}(x,t,k) = O(1)$ as $k\to k_{*}\in \Gamma_{*}$.

\item For $k \in \mathbb{C}\setminus \tilde{\Gamma}$, $\tilde{n}$ obeys the symmetries (\ref{ntsymm}).

\item $\tilde{n}(x,t,k) = (1,1,1) + O(k^{-1})$ as $k\to \infty$.
\end{enumerate}
\end{RHproblem}

\begin{remark}
A more naive choice for the jump matrix $P_1$, which would also remove the pole at $k_{0} \in \mathsf{Z}\cap \mathbb{R}$, would be
$$P_1(x,t,k) =  \begin{pmatrix} 1 &  -\frac{c_{k_0} e^{-\theta_{21}(x,t,k_0)}}{k - k_0} &  0 \\ 0 &  1 &  0 \\ 0 &  0 &  1 \end{pmatrix}.$$
However, the resulting jump matrix $v$ would not obey the $R$-symmetry of Lemma \ref{vsymmlemma} below.
\end{remark}

\subsection{A vanishing lemma}\label{vanishinglemmasubsec}
In the rest of Section \ref{inversesec}, we let $\{r_1(k), r_2(k), \mathsf{Z}, \{c_{k_0}\}_{k_0 \in \mathsf{Z}}\}$ be scattering data satisfying properties $(\ref{Theorem2.3itemi})$--$(\ref{Theorem2.3itemvii})$ of Theorem \ref{directth}. We assume that $T \in (0, \infty]$ is defined by (\ref{Tdef}) and that $v$ is the jump matrix given by (\ref{vdef}) and \eqref{vregdef}.

\begin{lemma}[Complex conjugation symmetry of $v$]\label{vsymmlemma}
The jump matrix $v$ defined in (\ref{vdef}) and \eqref{vregdef} satisfies $(\overline{v(\bar{k})^{-1}})^T = R(k)^{-1}  v(k) R(k)$ for $k \in \partial \mathcal{D} \cup \partial \D \setminus \Gamma_\star$ and $(\overline{v(\bar{k})^{-1}})^T = R(k)^{-1}  v(k)^{-1} R(k)$ for $k \in \Gamma \setminus (\partial \mathcal{D} \cup \partial \D)$. 
\end{lemma}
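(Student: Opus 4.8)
The plan is to verify the claimed symmetry piece by piece on the two parts of the contour: first on $\Gamma$ (the original contour of RH problem \ref{RHn}), then on $\partial\mathcal{D}$. On $\Gamma$ itself, the identity $(\overline{v(\bar k)^{-1}})^T = R(k)^{-1}v(k)^{-1}R(k)$ should be an immediate consequence of the $R$-symmetry \eqref{sRsymm} for $s, s^A$ combined with the $\bar k$-symmetry \eqref{r1r2 relation with kbar symmetry} relating $r_2$ to $r_1$; indeed, the jump matrices $v_1,\dots,v_9$ in \eqref{vdef} are built entirely out of $r_1, r_2$ and the exponentials $e^{\pm\theta_{ij}}$, and since the $\theta_{ij}$ are real-linear combinations of $l_i - l_j$ and $z_i - z_j$, which satisfy $\overline{(l_i-l_j)(\bar k)} = \pm(l_a - l_b)(k)$ and similarly for $z$ by \eqref{lmexpressions intro}, the exponential factors transform correctly. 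This computation is essentially the one already carried out (in the solitonless case) in \cite{CLmain}, so I would either cite it directly or reproduce it only for one representative sheet, say $v_1$ on $\Gamma_1 \subset \hat\Gamma_1$ and $v_4$ on $\Gamma_4$, to illustrate the mechanism; the remaining $v_j$ follow by the $\mathcal{A}$-symmetry $v(k) = \mathcal{A}v(\omega k)\mathcal{A}^{-1}$ together with the observation that $\mathcal{A}$ and $R$ are compatible in the relevant sense. I should note that on $\partial\D$ (where $v$ is one of $v_7,v_8,v_9$, involving the function $f$ of \eqref{def of f}), the sign is $(\overline{v(\bar k)^{-1}})^T = R(k)^{-1}v(k)R(k)$ rather than its inverse, because under $k\mapsto\bar k$ the unit circle is mapped to itself with reversed orientation, exchanging the roles of $v$ and $v^{-1}$; I would make this orientation bookkeeping explicit since it is the subtle point on $\Gamma$.

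The genuinely new content is the behavior on $\partial\mathcal{D}$. Here $v$ is given by \eqref{vregdef}: on $\partial D_\epsilon(k_0)$ with $k_0 \notin\R$ it equals $Q_1$ from \eqref{Q1def}, on $\partial D_\epsilon(k_0)^*$ it equals $Q_7$, on $\partial D_\epsilon(k_0)$ with $k_0\in\R$ it equals $P_1$ from \eqref{P1def}, and elsewhere it is obtained by the $\mathcal{A}$- and $\mathcal{B}$-symmetries. The strategy is to check the relation directly on the three "generating" circles and then propagate. For the case $k_0\in\R$: one computes $(\overline{P_1(\bar k)^{-1}})^T$. Since $P_1$ is unipotent upper-triangular with single nonzero off-diagonal entry in position $(1,2)$, its inverse is obtained by negating that entry, its transpose moves it to position $(2,1)$, and complex conjugation with $k\mapsto\bar k$ acts on the scalar $\frac{c_{k_0}e^{-\theta_{21}(x,t,k_0)}}{k-k_0}\frac{k^2-\omega}{k_0^2-\omega}$. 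Using that $k_0\in\R$, that $\theta_{21}(x,t,k_0) = (l_1-l_2)(k_0)x + (z_1-z_2)(k_0)t$ is purely imaginary (so $\overline{e^{-\theta_{21}(k_0)}} = e^{\theta_{21}(k_0)}$... actually one must check $l_1 - l_2$ and $z_1-z_2$ are real on the real axis from \eqref{lmexpressions intro}, giving $\theta_{21}(k_0)$ real — I would verify this sign carefully), and crucially the positivity/reality relation $c_{k_0} = -\bar c_{k_0}\overline{\tilde r(k_0)}$ established in the proof of Theorem \ref{directth}(\ref{Theorem2.3itemvii}), one matches $(\overline{P_1(\bar k)^{-1}})^T$ against $R(k)^{-1}P_1(k)R(k)$, where $R(k)$ from \eqref{def of R} is diagonal-block and its conjugation by $P_1$ is a routine $3\times 3$ computation. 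The remark immediately following RH problem \ref{RH problem for ntilde} — that the "naive" choice of $P_1$ fails precisely this symmetry — tells me the factor $\frac{k^2-\omega}{k_0^2-\omega}$ is exactly what is needed, which is a useful sanity check. For $k_0\notin\R$, the pair $(Q_1, Q_7)$ must be handled together: under $k\mapsto\bar k$, the circle $\partial D_\epsilon(k_0)$ maps to $\partial D_\epsilon(\bar k_0) = \partial D_\epsilon(k_0)^*$ (with reversed orientation), so the symmetry relates $Q_1$ on one circle to $Q_7$ on the other; the required identity becomes $(\overline{Q_1(\bar k)^{-1}})^T = R(k)^{-1}Q_7(k)R(k)$ (or its inverse, depending on orientation), and verifying this amounts to checking that the definition \eqref{dk0def} of $d_{k_0}$, $d_{k_0} = \frac{\bar k_0^2-1}{\omega^2(\omega^2-\bar k_0^2)}\bar c_{k_0}$, is consistent with the scalar that appears in $Q_7$ — indeed $Q_7$'s entry is $-\frac{\bar c_{k_0}e^{\theta_{32}(x,t,\bar k_0)}}{k-\bar k_0}\frac{k^2-1}{\omega^2(\omega^2-\bar k_0^2)}$, which is manifestly built from $d_{k_0}$, so the match should be clean. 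Finally, propagation: once the relation holds on $\partial D_\epsilon(k_0)$, $\partial D_\epsilon(k_0)^*$, and on the $P_1$-circle, it holds on all of $\partial\mathcal{D}$ because $v$ on the remaining circles is defined by $v(k) = \mathcal{A}v(\omega k)\mathcal{A}^{-1} = \mathcal{B}v(k^{-1})^{-1}\mathcal{B}$, and both $\mathcal{A}$ and $\mathcal{B}$ are real matrices that intertwine correctly with $R$ (one checks $R(k)^{-1}\mathcal{A}R(\omega k) = \mathcal{A}$ up to the relevant scalar, and similarly for $\mathcal{B}$ with $k\mapsto k^{-1}$, from the explicit form \eqref{def of R}).

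The main obstacle I anticipate is the orientation/inverse bookkeeping: keeping straight, on each component of $\tilde\Gamma$, whether the map $k\mapsto\bar k$ preserves or reverses the orientation of that component, and hence whether the symmetry produces $v$ or $v^{-1}$ on the image — this is why the lemma's statement itself splits into two cases ($\partial\mathcal{D}\cup\partial\D$ versus $\Gamma\setminus(\partial\mathcal{D}\cup\partial\D)$). A secondary technical point is confirming the reality of $c_{k_0}$ (in the rotated sense $i(\omega^2 k_0^2-\omega)c_{k_0}\in\R$, equivalently $c_{k_0} = -\bar c_{k_0}\overline{\tilde r(k_0)}$) is genuinely what makes the $P_1$-block symmetry close — but this is already available from Theorem \ref{directth}(\ref{Theorem2.3itemvii}), so it is a matter of invoking it at the right moment rather than proving anything new. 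The actual $3\times 3$ matrix multiplications with the diagonal-ish $R(k)$ are mechanical and I would present at most one of them in full.
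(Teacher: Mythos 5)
Your proposal is correct and follows essentially the same route as the paper: a direct entrywise verification using the relations \eqref{r1r2 relation on the unit circle} and \eqref{r1r2 relation with kbar symmetry} on $\Gamma$, the reality condition $i(\omega^2k_0^2-\omega)c_{k_0}\in\R$ (equivalently $\bar c_{k_0}=-\tilde r(k_0)c_{k_0}$) for the $P_1$-circles, the definition \eqref{dk0def} of $d_{k_0}$ for the $Q_1$/$Q_7$ pairing, and the stated orientation of $\partial\mathcal{D}$, with the remaining circles handled by the $\mathcal{A}$- and $\mathcal{B}$-symmetries. Your hedged point resolves the right way ($l_1-l_2$ and $z_1-z_2$ are real on $\R$, so $\theta_{21}(x,t,k_0)$ is real and $e^{-\theta_{21}(k_0)}>0$), and with that the $P_1$ computation closes exactly via $\bar c_{k_0}=-\tilde r(k_0)c_{k_0}$, as in the paper.
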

\begin{proof}
By (\ref{ck0positivitycondition}), we  have that $i(\omega^{2}k_{0}^{2} - \omega)c_{k_{0}} \in \R$ for each $k_0 \in \mathsf{Z}\cap \mathbb{R}$, or, equivalently, that $\bar{c}_{k_0} = -\tilde{r}(k_0) c_{k_0}$ for each $k_{0} \in \mathsf{Z}\cap \mathbb{R}$.
Recalling that $R(k)$ is given by (\ref{def of R}) and that $\partial \mathcal{D}$ is oriented as explained at the beginning of Section \ref{subsec:RHP ntilde}, the statement follows from direct calculations using (\ref{r1r2 relation on the unit circle}) and (\ref{r1r2 relation with kbar symmetry}). 
\end{proof}

The following vanishing lemma will be used to show existence of a solution $n(x,t,k)$ of RH problem \ref{RHn}.

\begin{lemma}[Vanishing lemma]\label{vanishinglemma}
Let $(x,t) \in \R \times [0,T)$. 
Suppose $\tilde{n}^h(x,t,k)$ is a solution of the homogeneous version of RH problem \ref{RH problem for ntilde}, that is, suppose $\tilde{n}^h$ satisfies $(a)$--$(d)$ of RH problem \ref{RH problem for ntilde} together with the homogeneous condition $\tilde{n}^h(x,t,k) = O(k^{-1})$ as $k \to \infty$.
Then $\tilde{n}^h$ vanishes identically. 
\end{lemma}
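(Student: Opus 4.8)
The standard strategy for vanishing lemmas of this kind is to form a scalar function out of $\tilde n^h$ and its complex-conjugate reflection using the $R$-symmetry, integrate it against $dk$ around the contour, and use analyticity to conclude it must vanish. Concretely, I would set $\tilde m^h(x,t,k) := (\overline{\tilde n^h(x,t,\bar k)})^T$, viewed as a column vector, and consider the scalar product $H(k) := \tilde n^h(k) R(k) \tilde m^h(k)$ (suppressing $(x,t)$). Here one must first check that $H$ is analytic across $\Gamma$ and $\partial\mathcal D$: on the unit circle and on $\partial\mathcal D$, Lemma \ref{vsymmlemma} gives $(\overline{v(\bar k)^{-1}})^T = R^{-1} v R$, which is exactly the relation needed so that the boundary values $H_+$ and $H_-$ agree there; on the rest of $\Gamma$ one gets $(\overline{v(\bar k)^{-1}})^T = R^{-1} v^{-1} R$, which produces a sign/inversion and means $H$ does \emph{not} automatically continue across those arcs, so instead one keeps track of the jump $H_+ - H_-$ there and must show its contour integral has a definite sign. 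This is where the positivity statements enter — one expects $H_+ - H_-$ on $\Gamma \setminus (\partial\mathcal D \cup \partial\D)$ to be, up to a positive factor, a manifestly nonpositive (or nonnegative) quantity like $-|(\tilde n^h)_j|^2$ times an entry of the Hermitian part of the jump, using the fact (from the inequalities in Lemma \ref{inequalitieslemma} and properties $(i)$–$(v)$ of Theorem \ref{directth}) that the relevant jump matrices are conjugate to positive-definite matrices on the appropriate arcs.

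The second ingredient is the behavior of $H$ at the singular points. At $\infty$, the homogeneous normalization $\tilde n^h = O(k^{-1})$ forces $H(k) = O(k^{-2})$, so the integral of $H$ over a large circle vanishes. At the intersection points $k_\star \in \Gamma_\star$ and at the sixth roots of unity $\kappa_j$, $\tilde n^h$ is $O(1)$ while $R(k)$ has at worst simple poles (from the factors $(k^2-1)$, $(k^2-\omega)$, $(k^2-\omega^2)$ in the denominators of \eqref{def of R}), so $H$ has at worst simple poles there; one must check — using the explicit structure of the limiting values, in particular the symmetry constraints \eqref{nsymm} which relate the three entries of $\tilde n^h$ near $\kappa_j$ — that these apparent poles are in fact removable, i.e.\ the residues vanish. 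This local analysis at $\pm 1$, $\pm\omega$, $\pm\omega^2$ is the part that is genuinely new compared with the classical $2\times 2$ setting and is the main technical obstacle; it should parallel the corresponding argument in \cite{CLmain}, using that $r_1(\pm 1)=1$, $r_2(\pm1)=-1$ from \eqref{r1r2at0} and that $r_2$ has simple zeros at $\pm\omega$.

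With analyticity of $H$ away from the "bad" arcs, the $O(k^{-2})$ decay at $\infty$, and removability of the potential poles at $\Gamma_\star \cup \mathcal Q$ established, I would apply Cauchy's theorem: integrating $H$ over the boundary of a large disk and collapsing onto $\Gamma \setminus (\partial\mathcal D \cup \partial\D)$ gives
\[
0 = \int_{\Gamma \setminus (\partial\mathcal D \cup \partial\D)} (H_+(k) - H_-(k))\, dk = \int_{\Gamma \setminus (\partial\mathcal D \cup \partial\D)} \tilde n^h_-(k) R(k)\big(v(k)^{-1} - \mathrm{I}\big)\big(\overline{\tilde n^h_-(k)}\big)^T\, dk,
\]
and the integrand, by the positivity just discussed, is a sum of terms each of one sign (say $\le 0$). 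Hence each term vanishes, which forces the relevant components of $\tilde n^h_-$ to vanish on each arc of $\Gamma$; combined with the jump relation \eqref{njump} and the symmetries \eqref{ntsymm}, this propagates to give $\tilde n^h \equiv 0$ on the arcs, and then analyticity plus the $O(k^{-1})$ decay at infinity forces $\tilde n^h$ to vanish identically. The one place requiring care beyond bookkeeping is confirming that the sign of the quadratic form is uniform across all nine pieces $\Gamma_1,\dots,\Gamma_9$ of $\Gamma$ simultaneously — the triangular $v_1,\dots,v_6$ and the full $v_7,v_8,v_9$ must all be handled, and it is precisely the role of Lemma \ref{inequalitieslemma}$(i)$–$(iv)$ to guarantee this.
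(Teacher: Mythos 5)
There is a genuine gap, and it is structural rather than a matter of missing bookkeeping. Your scalar $H(k)=\tilde n^h(k)R(k)\big(\overline{\tilde n^h(\bar k)}\big)^T$ is, by Lemma \ref{vsymmlemma}, continuous across $\partial\mathcal{D}$ and $\partial\D$, so in your scheme the small circles contribute nothing and the residue constants $c_{k_0}$ never enter; note that Lemma \ref{vsymmlemma} only uses the \emph{reality} of $i(\omega^2k_0^2-\omega)c_{k_0}$. But the vanishing lemma is false if one only assumes reality: zeros $k_0\in(-1,0)\cup(1,\infty)$ with $i(\omega^2k_0^2-\omega)c_{k_0}<0$ produce singular one-solitons (Lemma \ref{onesolitonsingularlemma}), so solvability of the RH problem for all $(x,t)$ cannot hold and the positivity condition \eqref{ck0positivitycondition} must be used somewhere. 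Your argument never uses it, so it cannot be correct as sketched. The clearest failure mode is the pure-soliton case $r_1=r_2=0$: then $v=I$ on $\Gamma$, your identity $\int_{\Gamma\setminus(\partial\mathcal D\cup\partial\D)}(H_+-H_-)\,dk=0$ is vacuous, and you get no information at all, even though the lemma must still force $\tilde n^h\equiv 0$. In the paper's proof the circles around real $k_0$ do contribute: they produce the discrete terms in \eqref{lol10 bis}, whose nonnegativity is exactly condition \eqref{ck0positivitycondition}, while the circles attached to $k_0\in D_{\mathrm{reg}}\setminus\R$ give zero jump; Section \ref{Dsingsubsec} shows precisely how this sign structure breaks for zeros in $D_{\mathrm{sing}}$.

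The second problem is the positivity on $\Gamma$ itself, which you defer to Lemma \ref{inequalitieslemma}; that lemma concerns $f$, $1+r_1r_2$ and $\hat\nu_{1,2}$ and is stated for the long-time asymptotics, and it does not make the quadratic form $\tilde n_-R(v^{\pm1}-I)\overline{\tilde n_-}^T$ sign-definite. The paper (following \cite{CLmain}) does not work with $R$ at all at this stage: it builds $F(k)=\mathbf{u}(k)\Delta_{3}(k)\overline{\mathbf{u}(\bar k)}^T$ on $U_1$ and $F(k)=\mathbf{u}(k)\Delta_{2}(k)\overline{\mathbf{u}(\bar k)}^T$ on $U_6$, where $\Delta_2,\Delta_3$ contain auxiliary scalar functions $\delta_2,\delta_3$ defined by the Cauchy integral \eqref{d3def}, extends $F$ by the $\omega$- and inversion-symmetries so that it descends to the variable $\lambda=\frac{k^3+k^{-3}}{2}$, and divides by $(\lambda+1)^{2/3}$ to control the points $\mathcal{Q}$ (which correspond to $\lambda=\pm1$ — your proposed removable-residue analysis at $\kappa_j$ is not how these points are handled). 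It is this region-wise, $\delta$-weighted construction that makes the jumps \eqref{Gjump1}--\eqref{Gjump4} manifestly nonnegative, and that produces nontrivial jumps of $G$ even where $v=I$, which is why the argument still works in the pure-soliton case. If you want to pursue your route you would have to (i) identify a replacement for the $\delta$-weights that yields sign-definiteness of $H_+-H_-$ on every arc $\Gamma_1,\dots,\Gamma_9$, and (ii) build a mechanism through which \eqref{ck0positivitycondition} enters; as it stands, both ingredients are missing.
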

\begin{proof}
The strategy of the proof is the same as \cite[Proof of Lemma 4.2]{CLmain}, but the presence of solitons makes it technically more complicated. As in \cite[Proof of Lemma 4.2]{CLmain}, we write $\tilde{n}^h(x,t,k) \equiv \mathbf{u}(k) = (u_1(k), u_2(k), u_3(k))$ and $\lambda = (k^3 + k^{-3})/2$. The mapping $k \mapsto \lambda$ is represented in Figure \ref{klambdamapfig}; it is six-to-one except at isolated points.
\begin{figure}
\begin{center}
\begin{tikzpicture}[scale=0.8]
\node at (0,0) {};
\draw[blue,line width=0.65 mm,-<-=0.2,->-=0.8] (0,0)--(0:4);
\draw[red,line width=0.65 mm,->-=0.3,-<-=0.7] (0,0)--(60:4);
\draw[blue,line width=0.65 mm,-<-=0.2,->-=0.8] (0,0)--(120:4);
\draw[red,line width=0.65 mm,->-=0.3,-<-=0.7] (0,0)--(180:4);
\draw[blue,line width=0.65 mm,-<-=0.2,->-=0.8] (0,0)--(-120:4);
\draw[red,line width=0.65 mm,->-=0.3,-<-=0.7] (0,0)--(-60:4);

\draw[black,line width=0.65 mm] ([shift=(-180:2cm)]0,0) arc (-180:-150:2cm);
\draw[green,line width=0.65 mm] ([shift=(-150:2cm)]0,0) arc (-150:-120:2cm);
\draw[green,line width=0.65 mm] ([shift=(-120:2cm)]0,0) arc (-120:-90:2cm);
\draw[black,line width=0.65 mm] ([shift=(-90:2cm)]0,0) arc (-90:-60:2cm);
\draw[black,line width=0.65 mm] ([shift=(-60:2cm)]0,0) arc (-60:-30:2cm);
\draw[green,line width=0.65 mm] ([shift=(-30:2cm)]0,0) arc (-30:0:2cm);
\draw[green,line width=0.65 mm] ([shift=(0:2cm)]0,0) arc (0:30:2cm);
\draw[black,line width=0.65 mm] ([shift=(30:2cm)]0,0) arc (30:60:2cm);
\draw[black,line width=0.65 mm] ([shift=(60:2cm)]0,0) arc (60:90:2cm);
\draw[green,line width=0.65 mm] ([shift=(90:2cm)]0,0) arc (90:120:2cm);
\draw[green,line width=0.65 mm] ([shift=(120:2cm)]0,0) arc (120:150:2cm);
\draw[black,line width=0.65 mm] ([shift=(150:2cm)]0,0) arc (150:180:2cm);
\draw[green,arrows={-Triangle[length=0.27cm,width=0.22cm]}]
($(10:2)$) --  ++(-75:0.001);
\draw[black,arrows={-Triangle[length=0.27cm,width=0.22cm]}]
($(40:2)$) --  ++(-45:0.001);
\draw[black,arrows={-Triangle[length=0.27cm,width=0.22cm]}]
($(80:2)$) --  ++(165:0.001);
\draw[green,arrows={-Triangle[length=0.27cm,width=0.22cm]}]
($(110:2)$) --  ++(195:0.001);
\draw[green,arrows={-Triangle[length=0.27cm,width=0.22cm]}]
($(130:2)$) --  ++(45:0.001);
\draw[black,arrows={-Triangle[length=0.27cm,width=0.22cm]}]
($(160:2)$) --  ++(75:0.001);
\draw[black,arrows={-Triangle[length=0.27cm,width=0.22cm]}]
($(200:2)$) --  ++(-75:0.001);
\draw[green,arrows={-Triangle[length=0.27cm,width=0.22cm]}]
($(230:2)$) --  ++(-45:0.001);
\draw[green,arrows={-Triangle[length=0.27cm,width=0.22cm]}]
($(250:2)$) --  ++(165:0.001);
\draw[black,arrows={-Triangle[length=0.27cm,width=0.22cm]}]
($(280:2)$) --  ++(195:0.001);
\draw[black,arrows={-Triangle[length=0.27cm,width=0.22cm]}]
($(320:2)$) --  ++(45:0.001);
\draw[green,arrows={-Triangle[length=0.27cm,width=0.22cm]}]
($(350:2)$) --  ++(75:0.001);

\node at (45:2.35) {\small{$10$}};

\node at (15:2.3) {\small{$9$}};

\node at (-15:2.3) {\small{$8$}};

\node at (-45:2.3) {\small{$7$}};

\node at (-5:3) {\small $1$};
\node at (55:3) {\small $2$};
\node at (115:3) {\small $3$};
\node at (175:3) {\small $4$};
\node at (235:3) {\small $5$};
\node at (295:3) {\small $6$};

\node at (30:3.5) {\small $U_1$};
\node at (90:3.5) {\small $U_2$};
\node at (150.5:3.5) {\small $U_3$};
\node at (210:3.5) {\small $U_4$};
\node at (-90:3.5) {\small $U_5$};
\node at (-30:3.5) {\small $U_6$};

\node at (30:1.3) {\small $U_7$};
\node at (90:1.3) {\small $U_8$};
\node at (151.3:1.3) {\small $U_9$};
\node at (210:1.3) {\small $U_{10}$};
\node at (-90:1.3) {\small $U_{11}$};
\node at (-30:1.3) {\small $U_{12}$};

\node at (0.6:4.5) {\small $\Gamma^h$};

\end{tikzpicture}
\\
\vspace{0.5cm}
\begin{tikzpicture}
\node at (0,0) {};
\draw[red,line width=0.65 mm,->-=0.6] (-4,0)--(-1,0);
\draw[black,line width=0.65 mm,->-=0.7] (-1,0)--(0,0);
\draw[green,line width=0.65 mm,->-=0.7] (0,0)--(1,0);
\draw[blue,line width=0.65 mm,->-=0.6] (1,0)--(4,0);

\node at (-1,-0.3) {\small $-1$};
\node at (0,-0.3) {\small $0$};
\node at (1,-0.3) {\small $1$};
\node at (4.3,0) {\small $\R$};
\end{tikzpicture}

\end{center}
\begin{figuretext}\label{klambdamapfig}
The map $k \mapsto \lambda = \frac{k^3 + k^{-3}}{2}$ maps the contour $\Gamma^h$ to the real axis. In particular, the unit circle is mapped to the interval $[-1, 1]$. 
\end{figuretext}
\end{figure}
Let $\Gamma^h$ be the contour in Figure \ref{klambdamapfig}, and let $\Gamma^h_j$ be the subcontour labeled by $j$ in Figure \ref{klambdamapfig}.
For $\lambda < -1$, let $k_2^h(\lambda) \in \Gamma_2^h$ be the unique solution of $\lambda = \frac{k^3 + k^{-3}}{2}$ in $\Gamma_2^h$.
Let $\delta_3:\C \setminus (-\infty,-1] \to \C$ be given by
\begin{align}\label{d3def}
\delta_3(\lambda) = e^{\frac{1}{2\pi i} \int_{-\infty}^{-1} \ln\big(\omega^2 \tilde{r}(\omega k_2^h(\lambda'))\big) \frac{d\lambda'}{\lambda' - \lambda}}, \qquad \lambda \in \C \setminus (-\infty,-1],
\end{align}
where the branch of the logarithm is fixed by the requirement that
\begin{align}\label{d3integrandpositive}
\frac{1}{2\pi i} \ln(\omega^2 \tilde{r}(\omega k_2^h(\lambda))) = \frac{1}{2\pi} \arctan\bigg(\frac{\sqrt{3} \left(2 r^2+1\right)}{2 r^4+2 r^2-1}\bigg) \in (0, 1/6)
\end{align}
for $\lambda < -1$.
Let $\delta_2: \C \setminus \Gamma^{h} \to \C$ and $F:(U_1 \cup U_6)\setminus (\Gamma\cup \partial \mathcal{D}) \to \C$ by
\begin{align}\label{d2def}
& \delta_2(k) := \tilde{r}(\omega^2 k) \delta_3(\lambda), \qquad k \in \C \setminus \Gamma^{h},
	\\ \nonumber
& F(k) := \begin{cases}
\mathbf{u}(k) \Delta_3(k) \overline{\mathbf{u}(\bar{k})}^T, \qquad k \in U_1 \setminus (\Gamma\cup \partial \mathcal{D}),
	\\
\mathbf{u}(k) \Delta_2(k) \overline{\mathbf{u}(\bar{k})}^T, \qquad k \in U_6 \setminus (\Gamma\cup \partial \mathcal{D}),
\end{cases}
\end{align}
where $U_1$ and $U_6$ are the open regions displayed in Figure \ref{klambdamapfig} and
\begin{align}\label{D2D3def}
\Delta_2(k) = \begin{pmatrix} 0 & \delta_2(k) & 0 \\ 0 & 0 & 0 \\ 0 & 0 & 0 \end{pmatrix}, \qquad 
\Delta_3(k) = \begin{pmatrix} 0 & \delta_2(k) & 0 \\ 0 & 0 & 0 \\ 0 & 0 & \delta_3(\lambda) \end{pmatrix}.
\end{align}
In the same way as in \cite[Proof of Lemma 4.2]{CLmain}, we can prove that $F$ extends to an analytic function $F:(U_1 \cup U_6)\setminus \partial \mathcal{D} \to \C$. 
We extend $F$ to $\C \setminus (\Gamma^h\cup \partial \mathcal{D})$ by means of the symmetries
\begin{align}\label{symm of F}
F(k) = F(\omega k), \qquad F(k) = F(1/k).
\end{align}
These symmetries imply that $F$ can be viewed as a function of $\lambda$ for $\lambda \in \C \setminus (\R \cup \lambda(\partial \mathcal{D}))$. Note that $\lambda(\partial \mathcal{D})$ is the union of $|\mathsf{Z}\cap \mathbb{R}| + 2 |\mathsf{Z}\setminus \mathbb{R}|$ small closed loops, and that $\lambda(\partial D_{\epsilon}(k_{0}))$ is oriented counterclockwise for all $k_{0}\in \mathsf{Z}$. Define the analytic function $G:\C \setminus \R \to \C$ by
$$G(\lambda) := \frac{F(k)}{(\lambda + 1)^{2/3}}, \qquad \lambda \in \C \setminus \R,$$
where the principal branch is taken for the root. The same computations as in \cite[Proof of Lemma 4.2]{CLmain} show that
\begin{align}
& G_+(\lambda) - G_-(\lambda) = \frac{\delta_3(\lambda)}{|\lambda + 1|^{2/3}} |u_3(k)|^2, & & \lambda >1, \; k=k_{1}^{h}(\lambda) \in \Gamma_{1}^{h}, \label{Gjump1} \\
& G_+(\lambda) - G_-(\lambda) = \frac{ \omega^2 \delta_{2+}(k) }{|\lambda + 1|^{2/3}} |u_1(k)|^2, & & \lambda <-1, \; k=k_{2}^{h}(\lambda) \in \Gamma_{2}^{h}, \label{Gjump2} \\
& G_+(\lambda) - G_-(\lambda) = \frac{1}{|\lambda + 1|^{2/3}} \mathbf{u}^2(k) Q_1(k) \overline{\mathbf{u}^2(k)}^T, & & \lambda \in (0,1), \; k=k_{9}^{h}(\lambda) \in \Gamma_{9}^{h}, \label{Gjump3} \\
& G_+(\lambda) - G_-(\lambda) = \frac{1}{|\lambda +1|^{2/3}} \mathbf{u}^6(k) Q_2(k) \overline{\mathbf{u}^6(k)}^T, & & \lambda \in (-1,0), \; k=k_{10}^{h}(\lambda) \in \Gamma_{10}^{h}, \label{Gjump4} 
\end{align}
where $Q_1(k) := \Delta_3(k) R(k)^{-1}  v_8(k) R(k) \mathcal{B}
- v_8(k) \mathcal{B} \Delta_2(1/k)$ is positive definite for $k\in \Gamma_{9}^{h}$ and $Q_2(k) := v_9(k) \Delta_3(k) \mathcal{B} - \mathcal{B} \Delta_2(1/k) \mathcal{B}R(k)^{-1}  v_9(k) R(k)\mathcal{B}$ is positive definite for $k\in \Gamma_{10}^{h}$. Furthermore, $\delta_3(\lambda)>0$ for $\lambda>1$ and $\omega^2 \delta_{2+}(k)>0$ for $k \in \Gamma_{2}^{h}$. In particular the right-hand sides of \eqref{Gjump1}--\eqref{Gjump4} are $\geq 0$. 

It remains to compute the jumps of $G$ on $\partial \mathcal{D}$.
We first consider the jumps associated with $k_{0}\in \mathsf{Z}\setminus \R$. We will use that
\begin{align}
v(x,t,k) = \begin{cases}
Q_1(x,t,k), & k \in \partial D_{\epsilon}(k_{0}), \; k_{0} \in \mathsf{Z}\setminus \mathbb{R},
	\\
Q_2(x,t,k), & k \in \omega \partial D_{\epsilon}(k_{0}), \; k_{0} \in \mathsf{Z}\setminus \mathbb{R},
	\\
Q_{5}(x,t,k), & k \in \partial(\omega D_{\epsilon}(k_{0}))^{-1}, \; k_{0} \in \mathsf{Z}\setminus \mathbb{R},
    \\
Q_7(x,t,k), & k \in \partial D_{\epsilon}(k_{0})^*, \; k_{0} \in \mathsf{Z}\setminus \mathbb{R},
	\\
Q_{11}(x,t,k), & k \in (\partial(\omega D_{\epsilon}(k_{0}))^{-1})^*, \; k_{0} \in \mathsf{Z}\setminus \mathbb{R},
\end{cases}
\end{align}
where $Q_1, Q_7$ are given by (\ref{Q1def}) and
\begin{align*}
& Q_2(x,t,k) := \mathcal{A}^{-1} Q_1(x,t,\omega^2 k)\mathcal{A}
= \begin{pmatrix} 1 & 0 & 0 \\ 0 & 1 & 0 \\ 0 & -\frac{c_{k_0} e^{\theta_{32}(x,t,\omega k_0)}}{k- \omega k_0} \frac{\omega^2(k^2-1)}{k_0^2-\omega} & 1 \end{pmatrix},
	\\
& Q_5(x,t,k)  := \mathcal{B} Q_2(x,t,k^{-1})^{-1}\mathcal{B}
= \begin{pmatrix} 1 &  0 &  0 \\ 0 &  1 &  0 \\ -\frac{k_{0}^{-1}k c_{k_{0}} e^{\theta_{31}(x,t,\omega^2 k_0^{-1})}}{k- \omega^2 k_0^{-1}} \frac{\omega k^{-2}-\omega}{k_0^2-\omega} &  0 &  1 \end{pmatrix},
	\\
& Q_{11}(x,t,k)  := R(k) (\overline{Q_5(x,t,\bar{k})^{-1}})^{T}R(k)^{-1}
= \begin{pmatrix} 1 &  0 &  0 \\ 0 &  1 &  \frac{\bar{k}_{0}^{-1}k\bar{c}_{k_0} e^{-\theta_{32}(x,t,\omega \bar{k}_0^{-1})}}{k- \omega\bar{k}_0^{-1}} \frac{1-\omega^{2}k^{-2}}{\omega^{2}(\omega^{2}-\bar{k}_0^2)} \\ 0 &  0  &  1 \end{pmatrix}.
\end{align*}

\subsubsection*{Jump for $\lambda \in \lambda(\partial D_{\epsilon}(k_{0}))$, $k_{0}\in \mathsf{Z}\cap D_{\mathrm{reg}}^{R}$}
Recall that both $\partial D_{\epsilon}(k_{0})$ and $\partial D_{\epsilon}(k_{0})^{*}$ are oriented counterclockwise. Letting $k \in \partial D_{\epsilon}(k_{0})$ correspond to $\lambda \in \lambda(\partial D_{\epsilon}(k_{0}))$, we find
\begin{align}
G_+(\lambda) - G_-(\lambda) 
& = \frac{1}{(\lambda+1)^{2/3}}\Big(\mathbf{u}_+(k) \Delta_3(k) \overline{\mathbf{u}_+(\bar{k})}^T
- \mathbf{u}_-(k) \Delta_3(k) \overline{\mathbf{u}_-(\bar{k})}^T \Big) \nonumber \\
& = \frac{\mathbf{u}_-(k)}{(\lambda+1)^{2/3}} \Big(Q_{1}(k) \Delta_3(k) \overline{Q_{7}(\bar{k})}^T  
- \Delta_3(k) \Big)\overline{\mathbf{u}_-(\bar{k})}^T = 0, \label{lol14}
\end{align}
where we have used \eqref{d2def} for the last equality.
\subsubsection*{Jump for $\lambda \in \lambda(\partial D_{\epsilon}(k_{0})^{*})$, $k_{0}\in \mathsf{Z}\cap D_{\mathrm{reg}}^{R}$} Letting $k \in \partial D_{\epsilon}(k_{0})^{*}$ correspond to $\lambda \in \lambda(\partial D_{\epsilon}(k_{0})^{*})$, we find
\begin{align}
G_+(\lambda) - G_-(\lambda) 
& = \frac{1}{(\lambda+1)^{2/3}}\Big(\mathbf{u}_+(k) \Delta_2(k) \overline{\mathbf{u}_+(\bar{k})}^T
- \mathbf{u}_-(k) \Delta_2(k) \overline{\mathbf{u}_-(\bar{k})}^T\Big) \nonumber \\
& = \frac{\mathbf{u}_-(k)}{(\lambda+1)^{2/3}} \Big(Q_{7}(k) \Delta_2(k) \overline{Q_{1}(\bar{k})}^T  
- \Delta_2(k) \Big)\overline{\mathbf{u}_-(\bar{k})}^T = 0. \label{lol15}
\end{align}
\subsubsection*{Jump for $\lambda \in \lambda(\partial D_{\epsilon}(k_{0}))$, $k_{0}\in \mathsf{Z}\cap D_{\mathrm{reg}}^{L}$}
Recall that $\partial D_{\epsilon}(k_{0})$ is oriented counterclockwise while $\omega^{2}\partial D_{\epsilon}(k_{0})^{-1}$ is oriented clockwise. Letting $k \in \partial D_{\epsilon}(k_{0})$ correspond to $\lambda \in \lambda(\partial D_{\epsilon}(k_{0}))$, we find
\begin{align*}
G_+(\lambda) - G_-(\lambda) & = \frac{F_{+}(k)-F_{-}(k)}{(\lambda + 1)^{2/3}} = \frac{F_{-}(\frac{1}{\omega k})-F_{+}(\frac{1}{\omega k})}{(\lambda + 1)^{2/3}} \\
& = \frac{1}{(\lambda+1)^{2/3}}\Big(\mathbf{u}_-(\tfrac{1}{\omega k}) \Delta_3(\tfrac{1}{\omega k}) \overline{\mathbf{u}_-(\tfrac{1}{\,\overline{\omega k}\,})}^T
- \mathbf{u}_+(\tfrac{1}{\omega k}) \Delta_3(\tfrac{1}{\omega k}) \overline{\mathbf{u}_+(\tfrac{1}{\,\overline{\omega k}\,})}^T \Big) \nonumber \\
& = \frac{\mathbf{u}_-(\tfrac{1}{\omega k})}{(\lambda+1)^{2/3}} \Big( \Delta_3(\tfrac{1}{\omega k}) 
- Q_{5}(\tfrac{1}{\omega k})\Delta_3(\tfrac{1}{\omega k})\overline{Q_{11}(\tfrac{1}{\,\overline{\omega k}\,})}^T   \Big)\overline{\mathbf{u}_-(\tfrac{1}{\,\overline{\omega k}\,})}^T = 0,
\end{align*}
where for the last equality we have used \eqref{d2def}.
\subsubsection*{Jump for $\lambda \in \lambda(\partial D_{\epsilon}(k_{0})^{*})$, $k_{0}\in \mathsf{Z}\cap D_{\mathrm{reg}}^{L}$} Letting $k \in \partial D_{\epsilon}(k_{0})^{*}$ correspond to $\lambda \in \lambda(\partial D_{\epsilon}(k_{0})^{*})$, we find
\begin{align*}
G_+(\lambda) - G_-(\lambda) & = \frac{F_{+}(k)-F_{-}(k)}{(\lambda + 1)^{2/3}} = \frac{F_{-}(\frac{1}{\omega k})-F_{+}(\frac{1}{\omega k})}{(\lambda + 1)^{2/3}} \\
& = \frac{1}{(\lambda+1)^{2/3}}\Big(\mathbf{u}_-(\tfrac{1}{\omega k}) \Delta_2(\tfrac{1}{\omega k}) \overline{\mathbf{u}_-(\tfrac{1}{\,\overline{\omega k}\,})}^T
- \mathbf{u}_+(\tfrac{1}{\omega k}) \Delta_2(\tfrac{1}{\omega k}) \overline{\mathbf{u}_+(\tfrac{1}{\,\overline{\omega k}\,})}^T\Big) \nonumber \\
& = \frac{\mathbf{u}_-(\frac{1}{\omega k})}{(\lambda+1)^{2/3}} \Big(\Delta_2(\tfrac{1}{\omega k})   
- Q_{11}(\tfrac{1}{\omega k})\Delta_2(\tfrac{1}{\omega k})\overline{Q_{5}(\tfrac{1}{\,\overline{\omega k}\,})}^T \Big)\overline{\mathbf{u}_-(\tfrac{1}{\,\overline{\omega k}\,})}^T = 0.
\end{align*}

We now consider the jumps associated with $k_{0}\in \mathsf{Z}\cap \R$. We will use that
\begin{align}
v(x,t,k) = \begin{cases}
P_1(x,t,k), & k \in \partial D_{\epsilon}(k_{0}), \; k_{0} \in \mathsf{Z}\cap \mathbb{R},
	\\
P_{5}(x,t,k), & k \in \partial(\omega D_{\epsilon}(k_{0}))^{-1}, \; k_{0} \in \mathsf{Z}\cap \mathbb{R},
	\\
P_{6}(x,t,k), & k \in \partial(\omega^{2}D_{\epsilon}(k_{0}))^{-1}, \; k_{0} \in \mathsf{Z}\cap \mathbb{R}, \end{cases}
\end{align}
where $P_1$ is given by (\ref{P1def}) and
\begin{align*}
& P_{5}(x,t,k) := \mathcal{B} \mathcal{A}^{-1} P_1\bigg(x,t,\frac{1}{\omega k}\bigg)^{-1} \mathcal{A} \mathcal{B}
= \begin{pmatrix} 1 &  0 &  0 \\ 0 &  1 &  0 \\ 0 &  -\frac{k_0^{-1} k c_{k_0} e^{\theta_{32}(x,t,\omega^2 k_0^{-1})}}{k- \omega^{2} k_0^{-1}} \frac{\omega k^{-2}-\omega}{k_0^2-\omega} &  1 \end{pmatrix}, 
	\\
& P_{6}(x,t,k) := \mathcal{B} \mathcal{A} P_1\bigg(x,t,\frac{1}{\omega^2 k}\bigg)^{-1} \mathcal{A}^{-1} \mathcal{B}
= \begin{pmatrix} 1 &  0 &  -\frac{k_0^{-1} k c_{k_0} e^{-\theta_{31}(x,t,\omega k_0^{-1})}}{k- \omega k_0^{-1}} \frac{\omega^{2} k^{-2}-\omega}{k_0^2-\omega} \\ 0 &  1 &  0 \\ 0 &  0 &  1 \end{pmatrix}.
\end{align*}
We use the notation $E_{k_{0}}(x,t)  := c_{k_0} e^{-\theta_{21}(x,t,k_0)}$ and note that $e^{-\theta_{21}(x,t,k_0)} > 0$ for $k_0 \in \R$.

\subsubsection*{Jump for $\lambda \in \lambda(\partial D_{\epsilon}(k_{0}))$, $k_{0}\in \mathsf{Z}\cap (1,\infty)$, $\im \lambda >0$}
Recall that $\partial D_{\epsilon}(k_{0})$ is oriented counterclockwise. Hence, letting $k \in \partial D_{\epsilon}(k_{0})$, $\im k >0$ correspond to $\lambda \in \lambda(\partial D_{\epsilon}(k_{0}))$, $\im \lambda >0$, we find
\begin{align}
G_+(\lambda) - G_-(\lambda) & = \frac{1}{(\lambda+1)^{2/3}}\Big(\mathbf{u}_+(k) \Delta_3(k) \overline{\mathbf{u}_+(\bar{k})}^T
- \mathbf{u}_-(k) \Delta_3(k) \overline{\mathbf{u}_-(\bar{k})}^T \Big)
	\nonumber \\
& = \frac{\mathbf{u}_-(k)}{(\lambda+1)^{2/3}} \Big(P_{1}(k) \Delta_3(k) \overline{P_{1}(\bar{k})}^T  
- \Delta_3(k) \Big)\overline{\mathbf{u}_-(\bar{k})}^T \nonumber \\
& = \frac{-E_{k_{0}}(x,t) \delta_{2}(k)(k^{2}-\omega^{2})}{(\lambda+1)^{2/3}(k-k_{0})(1-\omega^{2}k_{0}^{2})}u_{1}(k)\overline{u_{1}(\overline{k})}, \label{lol8}
\end{align}
where for the last equality we have used \eqref{d2def} and the fact that $u_{1,-}=u_{1,+}$ on $\partial D_{\epsilon}(k_{0})$.

\subsubsection*{Jump for $\lambda \in \lambda(\partial D_{\epsilon}(k_{0}))$, $k_{0}\in \mathsf{Z}\cap (1,\infty)$, $\im \lambda <0$}
For $k \in \partial D_{\epsilon}(k_{0})$, $\im k<0$,
\begin{align}
G_+(\lambda) - G_-(\lambda) 
& = \frac{1}{(\lambda+1)^{2/3}}\Big(\mathbf{u}_+(k) \Delta_2(k) \overline{\mathbf{u}_+(\bar{k})}^T
- \mathbf{u}_-(k) \Delta_2(k) \overline{\mathbf{u}_-(\bar{k})}^T\Big) \nonumber \\
& = \frac{\mathbf{u}_-(k)}{(\lambda+1)^{2/3}} \Big(P_{1}(k) \Delta_2(k) \overline{P_{1}(\bar{k})}^T  
- \Delta_2(k) \Big)\overline{\mathbf{u}_-(\bar{k})}^T \nonumber \\
& = \frac{-E_{k_{0}}(x,t) \delta_{2}(k)(k^{2}-\omega^{2})}{(\lambda+1)^{2/3}(k-k_{0})(1-\omega^{2}k_{0}^{2})}u_{1}(k)\overline{u_{1}(\overline{k})}. \label{lol9}
\end{align}

\subsubsection*{Jump for $\lambda \in \lambda(\partial D_{\epsilon}(k_{0}))$, $k_{0}\in \mathsf{Z}\cap (-1,0)$, $\im \lambda >0$}
Letting $k \in \partial D_{\epsilon}(k_{0})$, $\im k<0$, correspond to $\lambda \in \lambda(\partial D_{\epsilon}(k_{0}))$, $\im \lambda >0$, we find
\begin{align}
G_+(\lambda) - G_-(\lambda) & = \frac{F_{+}(k)-F_{-}(k)}{(\lambda + 1)^{2/3}} = \frac{F_{-}(\frac{1}{\omega k})-F_{+}(\frac{1}{\omega k})}{(\lambda + 1)^{2/3}} \nonumber \\
& = \frac{1}{(\lambda+1)^{2/3}}\Big(\mathbf{u}_-(\tfrac{1}{\omega k}) \Delta_3(\tfrac{1}{\omega k}) \overline{\mathbf{u}_-(\tfrac{1}{\,\overline{\omega k}\,})}^T - \mathbf{u}_+(\tfrac{1}{\omega k}) \Delta_3(\tfrac{1}{\omega k}) \overline{\mathbf{u}_+(\tfrac{1}{\,\overline{\omega k}\,})}^T \Big) \nonumber \\
& = \frac{\mathbf{u}_-(\tfrac{1}{\omega k})}{(\lambda+1)^{2/3}} \Big( \Delta_3(\tfrac{1}{\omega k})   
- P_{5}(\tfrac{1}{\omega k})\Delta_3(\tfrac{1}{\omega k})\overline{P_{6}(\tfrac{1}{\,\overline{\omega k}\,})}^T \Big)\overline{\mathbf{u}_-(\tfrac{1}{\,\overline{\omega k}\,})}^T \nonumber \\
& = \frac{-E_{k_{0}}(x,t) \delta_{2}(\frac{1}{\omega k})(k^{2}-\omega^{2})(\omega - k^{2})}{(\lambda+1)^{2/3}(k-k_{0})(1-\omega^{2}k_{0}^{2})\omega^{2}(1-k^{2})}u_{3}(\tfrac{1}{\omega k})\overline{u_{1}(\tfrac{1}{\,\overline{\omega k}\,})}, \label{lol8 bis}
\end{align}
where in the last step we have used that $u_{3,-}=u_{3,+}$ on $(\omega\partial D_{\epsilon}(k_{0}))^{-1}$ and that $u_{1,-}=u_{1,+}$ on $(\omega^{2}\partial D_{\epsilon}(k_{0}))^{-1}$.

\subsubsection*{Jump for $\lambda \in \lambda(\partial D_{\epsilon}(k_{0}))$, $k_{0}\in \mathsf{Z}\cap (-1,0)$, $\im \lambda <0$}
Letting $k \in \partial D_{\epsilon}(k_{0})$, $\im k >0$ correspond to $\lambda \in \lambda(\partial D_{\epsilon}(k_{0}))$, $\im \lambda <0$, we find
\begin{align}
G_+(\lambda) - G_-(\lambda) & = \frac{F_{+}(k)-F_{-}(k)}{(\lambda + 1)^{2/3}} = \frac{F_{-}(\frac{1}{\omega^{2} k})-F_{+}(\frac{1}{\omega^{2} k})}{(\lambda + 1)^{2/3}} \nonumber \\
& = \frac{1}{(\lambda+1)^{2/3}}\Big(\mathbf{u}_-(\tfrac{1}{\omega^{2} k}) \Delta_2(\tfrac{1}{\omega^{2} k}) \overline{\mathbf{u}_-(\tfrac{1}{\,\overline{\omega^{2} k}\,})}^T
- \mathbf{u}_+(\tfrac{1}{\omega^{2} k}) \Delta_2(\tfrac{1}{\omega^{2} k}) \overline{\mathbf{u}_+(\tfrac{1}{\,\overline{\omega^{2} k}\,})}^T \Big) \nonumber \\
& = \frac{\mathbf{u}_-(\tfrac{1}{\omega^{2} k})}{(\lambda+1)^{2/3}} \Big( \Delta_2(\tfrac{1}{\omega^{2} k})   
- P_{6}(\tfrac{1}{\omega^{2} k})\Delta_2(\tfrac{1}{\omega^{2} k})\overline{P_{5}(\tfrac{1}{\,\overline{\omega^{2} k}\,})}^T \Big)\overline{\mathbf{u}_-(\tfrac{1}{\,\overline{\omega^{2} k}\,})}^T \nonumber \\
& = \frac{-E_{k_{0}}(x,t) \delta_{2}(\tfrac{1}{\omega^{2} k})(k^{2} - \omega^{2})}{(\lambda+1)^{2/3}(k-k_{0})(1-\omega^{2}k_{0}^{2})}u_{1}(\tfrac{1}{\omega^{2} k})\overline{u_{3}(\tfrac{1}{\,\overline{\omega^{2} k}\,})}, \label{lol9 bis}
\end{align}
where for the last equality we have used \eqref{d2def} and the fact that $u_{1,-}=u_{1,+}$ on $(\omega^{2}\partial D_{\epsilon}(k_{0}))^{-1}$ and that $u_{3,-}=u_{3,+}$ on $(\omega\partial D_{\epsilon}(k_{0}))^{-1}$.

Equations \eqref{lol8 bis} and \eqref{lol9 bis} can be further simplified. Using the $\mathcal{A}$-symmetry in \eqref{ntsymm}, we infer that
\begin{align*}
\overline{u_{3}(\tfrac{1}{\,\overline{\omega^{2} k}\,})} = \overline{u_{1}(\tfrac{1}{\, \omega^{2} \overline{k}\,})}, \qquad u_{3}(\tfrac{1}{\,\omega k\,}) = u_{1}(\tfrac{1}{\, \omega^{2} k\,}),
\end{align*}
and therefore \eqref{lol8 bis} and \eqref{lol9 bis} become
\begin{align}
& G_+(\lambda) - G_-(\lambda) = \frac{-E_{k_{0}}(x,t) \delta_{2}(\frac{1}{\omega k})(k^{2}-\omega^{2})(\omega - k^{2})}{(\lambda+1)^{2/3}(k-k_{0})(1-\omega^{2}k_{0}^{2})\omega^{2}(1-k^{2})}u_{1}(\tfrac{1}{\, \omega^{2} k\,})\overline{u_{1}(\tfrac{1}{\,\omega^{2} \overline{k}\,})},  \label{lol8 bis bis} \\
& G_+(\lambda) - G_-(\lambda) = \frac{-E_{k_{0}}(x,t) \delta_2(\tfrac{1}{\omega^2 k})(k^{2} - \omega^{2})}{(\lambda+1)^{2/3}(k-k_{0})(1-\omega^{2}k_{0}^{2})}u_{1}(\tfrac{1}{\omega^{2} k})\overline{u_{1}(\tfrac{1}{\, \omega^{2} \overline{k}\,})}, \label{lol9 bis bis}
\end{align}
respectively. Recall that \eqref{lol8 bis bis} is valid for $\lambda \in \lambda(\partial D_{\epsilon}(k_{0}))$ with $\im \lambda >0$, while \eqref{lol9 bis bis} is valid for $\lambda \in \lambda(\partial D_{\epsilon}(k_{0}))$ with $\im \lambda <0$. One can check that the right-hand side of \eqref{lol9 bis bis} is the analytic continuation of the right-hand side of \eqref{lol8 bis bis}. Indeed, suppose that $k \in (-1,0)$ corresponds to $\lambda \in (-\infty, -1)$. Then $k_{2}^{h}(\lambda)=\frac{1}{\omega k}$, so (\ref{d3def}) implies that
$$\frac{\delta_{3,+}(\lambda)}{\delta_{3,-}(\lambda)} = \omega^2 \tilde{r}(\omega k_2^h(\lambda))
= \omega^2 \tilde{r}(k^{-1}).$$
Consequently, using (\ref{d2def}), \eqref{lol8 bis bis}, and the fact that $\tilde{r}(\frac{1}{\omega^{2} k}) = \frac{1-k^2}{\omega(\omega - k^2)}$, we obtain 
\begin{align*}
\bigg(\frac{\delta_{2}(\frac{1}{\omega k})(\omega - k^{2})}{(\lambda+1)^{2/3}\omega^{2}(1-k^{2})}\bigg)_+
= \frac{\delta_{3,+}(\lambda)\tilde{r}(\frac{1}{\omega^{2} k})(\omega - k^{2})}{\omega |\lambda+1|^{2/3}\omega^{2}(1-k^{2})} = \frac{\delta_{3,-}(\lambda) \tilde{r}(\tfrac{1}{k})}{\omega^2 |\lambda+1|^{2/3}}
= \bigg(\frac{\delta_{2}(\tfrac{1}{\omega^2 k})}{(\lambda+1)^{2/3}}\bigg)_-.
\end{align*}

\subsubsection*{Final steps}
The function $G(\lambda)$ is analytic for $\lambda \in \C \setminus (\R \cup \lambda(\partial \mathcal{D}))$ and has continuous boundary values on $\lambda(\partial \mathcal{D}) \cup \R \setminus \{-1,0,1\}$. 
Moreover, we saw above that $G$ has no jump across $\lambda[\partial D_{\epsilon}(k_{0})] \cup \lambda[\partial D_{\epsilon}(k_{0})^*]$ for $k_{0}\in \mathsf{Z}\setminus \R$. Consequently, since $G(\lambda) = O(\lambda^{-4/3})$ as $\lambda \to \infty$, Cauchy's theorem yields
\begin{align}
& \int_{\mathbb{R}\setminus \cup_{k_{0}\in \mathsf{Z}\cap \mathbb{R}}\lambda[(k_{0}-\epsilon,k_{0}+\epsilon)]} G_{+}(\lambda)d\lambda - \sum_{k_{0}\in \mathsf{Z}\cap \mathbb{R}}\int_{\lambda[\partial D_{\epsilon}(k_{0})] \cap \{\im \lambda > 0\}} G_{-}(\lambda)d\lambda = 0, \label{lol1} \\
&- \int_{\mathbb{R}\setminus \cup_{k_{0}\in \mathsf{Z}\cap \mathbb{R}}\lambda[(k_{0}-\epsilon,k_{0}+\epsilon)]} G_{-}(\lambda)d\lambda 
- \sum_{k_{0}\in \mathsf{Z}\cap \mathbb{R}}\int_{\lambda[\partial D_{\epsilon}(k_{0})] \cap \{\im \lambda < 0\}} G_{-}(\lambda)d\lambda = 0, \label{lol2}\\
& \int_{\cup_{k_{0}\in \mathsf{Z}\cap \mathbb{R}}\lambda[(k_{0}-\epsilon,k_{0}+\epsilon)]} G_{+}(\lambda)d\lambda + \sum_{k_{0}\in \mathsf{Z}\cap \mathbb{R}}\int_{\lambda[\partial D_{\epsilon}(k_{0})] \cap \{\im \lambda > 0\}} G_{+}(\lambda)d\lambda = 0, \label{lol3} \\
& -\int_{\cup_{k_{0}\in \mathsf{Z}\cap \mathbb{R}}\lambda[(k_{0}-\epsilon,k_{0}+\epsilon)]} G_{-}(\lambda)d\lambda + \sum_{k_{0}\in \mathsf{Z}\cap \mathbb{R}}\int_{\lambda[\partial D_{\epsilon}(k_{0})] \cap \{\im \lambda < 0\}} G_{+}(\lambda)d\lambda = 0. \label{lol4}
\end{align}
Adding the four equations \eqref{lol1}--\eqref{lol4}, we get
\begin{align*}
\int_{\mathbb{R}} (G_{+}(\lambda)-G_{-}(\lambda))d\lambda + \sum_{k_{0}\in \mathsf{Z}\cap \mathbb{R}}\int_{\lambda[\partial D_{\epsilon}(k_{0})] } (G_{+}(\lambda)-G_{-}(\lambda)) d\lambda  = 0,
\end{align*}
or, changing variables in the integral over $\lambda[\partial D_{\epsilon}(k_{0})]$, 
\begin{align}\label{intGplusGminus}
\int_{\mathbb{R}} (G_{+}(\lambda)-G_{-}(\lambda))d\lambda + \sum_{k_{0}\in \mathsf{Z}\cap \mathbb{R}}\int_{\partial D_{\epsilon}(k_{0}) } (G_{+}(\lambda)-G_{-}(\lambda)) \frac{d\lambda}{dk} dk  = 0.
\end{align}
Using \eqref{lol8}, \eqref{lol9}, \eqref{lol8 bis bis}, \eqref{lol9 bis bis}, and the residue theorem to compute the integrals over $\partial D_{\epsilon}(k_{0})$, we can write (\ref{intGplusGminus}) as
\begin{align}
& \int_{\mathbb{R}} (G_{+}(\lambda)-G_{-}(\lambda))d\lambda + \sum_{k_{0}\in \mathsf{Z}\cap (1,\infty)}\frac{-2\pi i E_{k_{0}}(x,t) \delta_{2}(k_{0})(k_{0}^{2}-\omega^{2})}{(\lambda(k_{0})+1)^{2/3}(1-\omega^{2}k_{0}^{2})}|u_{1}(k_{0})|^{2} \lambda'(k_0) \nonumber 
	\\
& + \sum_{k_{0}\in \mathsf{Z}\cap (-1,0)} \frac{-2\pi iE_{k_{0}}(x,t) \delta_{2,+}(\frac{1}{\omega k_{0}}) \omega (k_{0}^{2}-\omega^{2})}{|\lambda(k_{0})+1|^{2/3}(1-k_{0}^{2})}|u_{1}(\tfrac{1}{\omega^{2} k_{0}})|^{2} \lambda'(k_0)= 0. \label{lol10}
\end{align}
For $k_{0} \in \mathsf{Z}\cap (1,\infty)$, we have $\lambda'(k_0)>0$, and thus
\begin{align*}
& \frac{-2\pi i E_{k_{0}}(x,t) \delta_{2}(k_{0})(k_{0}^{2}-\omega^{2})}{(\lambda(k_{0})+1)^{2/3}(1-\omega^{2}k_{0}^{2})}|u_{1}(k_{0})|^{2}\lambda'(k_0) 
	\\
& \qquad = i(\omega^{2}k_{0}^{2} - \omega) E_{k_{0}}(x,t) \frac{2\pi|\delta_{3}(\lambda(k_{0}))||u_{1}(k_{0})|^{2}}{|\lambda(k_{0})+1|^{2/3}|k_{0}^{2}-1|} |\lambda'(k_0)| \geq 0,
\end{align*}
where we have used \eqref{ck0positivitycondition} for the last inequality. 
Recall from \cite[(4.8)]{CLmain} that $\omega^{2}\delta_{2+}(\frac{1}{\omega k_{0}})>0$ for $k_0 \in (-1,0)$. For $k_0 \in (-1,0)$, it also holds that $\lambda'(k_0) < 0$.
Thus, for $k_{0} \in \mathsf{Z}\cap (-1,0)$, we have
\begin{align*}
& \frac{-2\pi iE_{k_{0}}(x,t) \delta_{2,+}(\frac{1}{\omega k_{0}}) \omega (k_{0}^{2}-\omega^{2})}{|\lambda(k_{0})+1|^{2/3}(1-k_{0}^{2})}|u_{1}(\tfrac{1}{\omega^{2} k_{0}})|^{2}
\lambda'(k_0)
	 \\
& \qquad = i (\omega^{2}k_{0}^{2} - \omega) E_{k_{0}}(x,t) \frac{2\pi \omega^{2}\delta_{2,+}(\frac{1}{\omega k_{0}}) |u_{1}(\tfrac{1}{\omega^{2} k_{0}})|^{2}}{|\lambda(k_{0})+1|^{2/3}(1-k_{0}^{2})} |\lambda'(k_0)| \geq 0,
\end{align*}
where we have used \eqref{ck0positivitycondition} for the last inequality. 
It follows that \eqref{lol10} can be rewritten as
\begin{align}
& \int_{\mathbb{R}} (G_{+}(\lambda)-G_{-}(\lambda))d\lambda 
+ \sum_{k_{0}\in \mathsf{Z}\cap (1,\infty)}i(\omega^{2}k_{0}^{2} - \omega) E_{k_{0}}(x,t)\frac{2\pi|\delta_{3}(\lambda(k_{0}))||u_{1}(k_{0})|^{2}}{|\lambda(k_{0})+1|^{2/3}|k_{0}^{2}-1|} |\lambda'(k_0)|
	\nonumber \\
& + \sum_{k_{0}\in \mathsf{Z}\cap (-1,0)} i (\omega^{2}k_{0}^{2} - \omega) E_{k_{0}}(x,t) \frac{2\pi \omega^{2}\delta_{2,+}(\frac{1}{\omega k_{0}}) |u_{1}(\tfrac{1}{\omega^{2} k_{0}})|^{2}}{|\lambda(k_{0})+1|^{2/3}(1-k_{0}^{2})}|\lambda'(k_0)| = 0 \label{lol10 bis}
\end{align}
But we have shown that $G_+ - G_- \geq 0$ on $\R \setminus \{-1,0,1\}$ and that all elements in the above two sums are $\geq 0$, so recalling the formulas (\ref{Gjump1})--(\ref{Gjump4}) for $G_+ - G_-$, we find that 
\begin{align*}
& u_3(k) = 0 \; \text{for $k \in \Gamma_1^h$};
\;\;\, u_1(k) = 0 \; \text{for $k \in \Gamma_2^h$};
\;\;\,  \mathbf{u}^2(k) = 0 \; \text{for $k \in \Gamma_9^h$}; \;\;\, \mathbf{u}^6(k) = 0 \; \text{for $k \in \Gamma_{10}^h$}.
\end{align*}	
In the same way as in the final steps of \cite[Proof of Lemma 4.2]{CLmain}, we conclude from these relations that $\mathbf{u}$ is identically zero for all $k \in \C \setminus (\Gamma\cup \partial \mathcal{D})$. 
\end{proof}

\subsection{A remark about zeros in $D_{\mathrm{sing}}$}\label{Dsingsubsec}
As mentioned in the introduction, zeros of $s_{11}$ in $D_{\mathrm{sing}}$ correspond to singular breather solitons. Such breathers are not global solutions of \eqref{badboussinesq} (since they have singularities), and therefore one cannot guarantee existence of $\tilde{n}$ for all $x$ and $t$ if such zeros are allowed. In this subsection, we explain where the proof of Lemma \ref{vanishinglemma} breaks down if $s_{11}$ is allowed to have zeros in $D_{\mathrm{sing}}$. 

Suppose that $k_{0} \in D_{\mathrm{sing}}^{R}$ is a simple zero of $s_{11}$, and define $\tilde{n}$ as in \eqref{def of ntildeC}--\eqref{def of ntildeR}. Following the proof of Lemma \ref{vanishinglemma}, we are led to compute the jumps of $G$ for $\lambda \in \lambda(\partial D_{\epsilon}(k_{0}))$, $k_{0} \in D_{\mathrm{sing}}^{R}$. Letting $k\in \partial D_{\epsilon}(k_{0})$ correspond to $\lambda \in \lambda(\partial D_{\epsilon}(k_{0}))$, we find
\begin{align}
G_+(\lambda) - G_-(\lambda) & = \frac{1}{(\lambda+1)^{2/3}}\Big(\mathbf{u}_+(k) \Delta_2(k) \overline{\mathbf{u}_+(\bar{k})}^T
- \mathbf{u}_-(k) \Delta_2(k) \overline{\mathbf{u}_-(\bar{k})}^T \Big) \nonumber \\
& = \frac{\mathbf{u}_-(k)}{(\lambda+1)^{2/3}} \Big(Q_{1}(k) \Delta_2(k) \overline{Q_{7}(\bar{k})}^T  
- \Delta_2(k) \Big)\overline{\mathbf{u}_-(\bar{k})}^T \nonumber \\
& = \frac{c_{k_0} e^{-\theta_{31}(x,t,k_0)} \delta_{3}(\lambda)(k^{2}-\omega)}{(\lambda+1)^{2/3}(k-k_{0})(k_{0}^{2}-\omega)}u_{1}(k) \overline{u_{3}(\overline{k})}, \label{lol6}
\end{align}
where for the last equality we have used \eqref{d2def} and the fact that $u_{1,+}=u_{1,-}$, $u_{3,+}=u_{3,-}$ on $\partial D_{\epsilon}(k_{0})$. Similarly, letting $k\in \partial D_{\epsilon}(k_{0})^{*}$ correspond to $\lambda \in \lambda(\partial D_{\epsilon}(k_{0})^{*})$, we find
\begin{align}
G_+(\lambda) - G_-(\lambda) & = \frac{1}{(\lambda+1)^{2/3}}\Big(\mathbf{u}_+(k) \Delta_3(k) \overline{\mathbf{u}_+(\bar{k})}^T - \mathbf{u}_-(k) \Delta_3(k) \overline{\mathbf{u}_-(\bar{k})}^T \Big) \nonumber \\
& = \frac{\mathbf{u}_-(k)}{(\lambda+1)^{2/3}} \Big(Q_{7}(k) \Delta_3(k) \overline{Q_{1}(\bar{k})}^T  
- \Delta_3(k) \Big)\overline{\mathbf{u}_-(\bar{k})}^T \nonumber  \\
& = \frac{-\bar{c}_{k_0} e^{-\overline{\theta_{31}(x,t,k_0)}} \delta_{3}(\lambda)(k^{2}-\omega^{2})}{(\lambda+1)^{2/3}(k-\bar{k}_0)(\bar{k}_0^{2}-\omega^{2})}u_{3}(k) \overline{u_{1}(\overline{k})}. \label{lol7}
\end{align}
To proceed, we would therefore have to add
\begin{align*}
& - \sum_{\substack{k_{0}\in \mathsf{Z}\setminus \mathbb{R} \\ \im k_{0}<0}} 4\pi \, \im \bigg( \frac{c_{k_0} e^{-\theta_{31}(x,t,k_0)} \delta_{3}(\lambda(k_{0}))(k_{0}^{2}-\omega)}{(\lambda(k_{0})+1)^{2/3}(k_{0}^{2}-\omega)} \lambda'(k_0) u_{1}(k_{0}) \overline{u_{3}(\bar{k}_0)}     \bigg)  \nonumber
\end{align*}
to the left-hand side of \eqref{lol10 bis}. The issue is that one cannot guarantee in general that
\begin{align*}
- 4\pi \, \im \bigg( \frac{c_{k_0} e^{-\theta_{31}(x,t,k_0)} \delta_{3}(\lambda(k_{0}))(k_{0}^{2}-\omega)}{(\lambda(k_{0})+1)^{2/3}(k_{0}^{2}-\omega)} \lambda'(k_0) u_{1}(k_{0}) \overline{u_{3}(\bar{k}_0)}     \bigg) \geq 0.
\end{align*}
There is a similar issue if $k_{0}\in D_{\mathrm{sing}}^{L}$ is a simple zero of $s_{11}$.

\subsection{Proof of Theorem \ref{inverseth}}\label{inversethsubsec}
The open set $\C \setminus \tilde{\Gamma} = \Omega_+ \cup \Omega_-$ is the disjoint union of $\Omega_+$ and $\Omega_-$, where
\begin{align*}
& \Omega_+ \hspace{-0.05cm} = \hspace{-0.05cm} \bigg( \hspace{-0.05cm} D_1 \hspace{-0.05cm} \cup \hspace{-0.05cm} D_3 \hspace{-0.05cm} \cup \hspace{-0.05cm} D_5 \hspace{-0.05cm} \cup \bigcup_{\substack{j=0,1,2 \\ k_{0}\in \mathsf{Z}}} \hspace{-0.05cm} \big(\omega^{j} D_{\epsilon}(k_{0}) \cup \omega^{j} D_{\epsilon}(k_{0})^{*}\big) \bigg) \setminus  \hspace{-0.05cm} \bigcup_{\substack{j=0,1,2 \\ k_{0}\in \mathsf{Z}}} \big( \omega^{j} D_{\epsilon}(k_{0})^{-1} \cup \omega^{j} D_{\epsilon}(k_{0})^{-*} \big),  \\
& \Omega_- \hspace{-0.05cm} = \hspace{-0.05cm} \bigg( \hspace{-0.05cm} D_2 \hspace{-0.05cm} \cup \hspace{-0.05cm} D_4 \hspace{-0.05cm} \cup \hspace{-0.05cm} D_6 \hspace{-0.05cm} \cup \hspace{-0.05cm} \bigcup_{\substack{j=0,1,2 \\ k_{0}\in \mathsf{Z}}} \big( \omega^{j} D_{\epsilon}(k_{0})^{-1} \cup \omega^{j} D_{\epsilon}(k_{0})^{-*} \big) \bigg) \setminus \hspace{-0.05cm} \bigcup_{\substack{j=0,1,2 \\ k_{0}\in \mathsf{Z}}} \big(\omega^{j} D_{\epsilon}(k_{0}) \cup \omega^{j} D_{\epsilon}(k_{0})^{*}\big).
\end{align*}
In this proof, we orient $\Gamma$ so that $\Omega_{+}$ lies on
the left and $\Omega_{-}$ lies on the right of $\Gamma$; this orientation differs from Figure \ref{fig: Dn} only in that $\Gamma_{2},\Gamma_{4},\Gamma_{6}$ are now oriented towards $0$. 
Let the Sobolev space $H^N(\Gamma)$ consist of all $f \in L^2(\Gamma)$ with $N$ weak derivatives in $L^2$. Let $H_z^N(\Gamma)$ be the space of all $f \in H^N(\Gamma)$ such that $f$ satisfies the $(N-1)$th-order zero-sum condition; see e.g. \cite[Definition 2.47]{TO2016} for the formulation of this condition. Let 
$$H_\pm^N(\Gamma) := \{f \in L^2(\Gamma) \, | \, \text{$f \in H_z^N(\partial D)$ for every connected component $D$ of $\Omega_\pm$}\}.$$

\begin{lemma}\label{vpmlemma}
For any integer $N \geq 1$, there exist $3 \times 3$-matrix valued functions $v^\pm$ such that
\begin{enumerate}[$(a)$]
\item $v = (v^-)^{-1}v^+$ on $\Gamma  \setminus \Gamma_\star$, 
\item $v^\pm, (v^\pm)^{-1} \in I + H_{\pm}^N(\Gamma)$,
\item $v^\pm(k) = \mathcal{A} v^\pm(\omega k) \mathcal{A}^{-1}
= \mathcal{B}v^\mp(k^{-1})\mathcal{B}$ for $k \in \Gamma$, and 
\item $w^+ := v^+ - I$ and $w^{-} := I - v^{-}$ are nilpotent.
\end{enumerate}
\end{lemma}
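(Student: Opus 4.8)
The plan is to construct $v^\pm$ explicitly from an upper/lower triangular (or "LDU"-type) factorization of the jump matrix $v$ on each piece of $\Gamma$, following the standard recipe used in RH theory and in \cite[Proof of Lemma 4.1]{CLmain} in the solitonless case, and then to check that the presence of the soliton circles $\partial\mathcal{D}$ causes no new difficulty because the jump matrices there ($Q_j$, $P_j$) are already unipotent triangular. Concretely, on $\Gamma_j$ for $j=1,\dots,6$ the jump $v_j$ is a $2\times 2$ (embedded) matrix of the form $\begin{pmatrix} 1 & -r e^{-\theta} \\ r' e^{\theta} & 1-rr' \end{pmatrix}$; one factors this as a lower-triangular unipotent times an upper-triangular unipotent (in the order dictated by the sign of $\re\theta$ on that ray, i.e. by which side is $\Omega_+$), so that $w^+ = v^+-I$ and $I - v^- = w^-$ are strictly triangular, hence nilpotent. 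On the contour pieces $\Gamma_7,\Gamma_8,\Gamma_9$ (the unit circle arcs), $v$ is a full $3\times 3$ matrix and one uses the classical $LU$-type factorization $v = (v^-)^{-1} v^+$ with $v^-$ lower-triangular unipotent and $v^+$ upper-triangular unipotent (possible because the relevant leading principal minors of $v$ are the functions $f(k)$, $1+r_1r_2$, etc., which are nonzero on the open arcs by Lemma \ref{inequalitieslemma}); the factors are again unipotent triangular, so $(d)$ holds.

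The second ingredient is the symmetry requirement $(c)$. Here I would not try to impose it by hand on the $LU$ factors of each $\Gamma_j$ separately; instead I would define $v^\pm$ on $\Gamma_1,\Gamma_7,\Gamma_9$ (one ray, two arcs) by the explicit factorizations above, and then \emph{propagate} the definition to the remaining pieces of $\Gamma$ using the relations $v^\pm(k) := \mathcal{A} v^\pm(\omega k)\mathcal{A}^{-1}$ and $v^\pm(k) := \mathcal{B} v^\mp(k^{-1})\mathcal{B}$. Because $\mathcal{A}$ (cyclic permutation) and $\mathcal{B}$ (transposition-type) conjugation preserve unipotence and the nilpotence class is unchanged, properties $(b)$ and $(d)$ are inherited. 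For consistency one must check that the jump matrix $v$ itself transforms correctly, i.e. $v(k) = \mathcal{A} v(\omega k)\mathcal{A}^{-1} = \mathcal{B} v(k^{-1})^{-1}\mathcal{B}$ on $\Gamma$ (and the analogous relation on $\partial\mathcal{D}$, which is built into \eqref{vregdef}); this is exactly the $\mathcal{A}$- and $\mathcal{B}$-symmetry of the RH problem for $\tilde n$ and was already verified in the construction, so the factorization $v=(v^-)^{-1}v^+$ is compatible with the propagation. On the soliton circles $\partial D_\epsilon(k_0)$ and their $\omega$- and inversion-images, $v$ equals one of $Q_1,Q_7,P_1,\dots$, which are already unipotent and triangular; so there one simply sets $v^+ = v$, $v^- = I$ on the circles bounding $\Omega_+$-disks and $v^- = v^{-1}$, $v^+=I$ on those bounding $\Omega_-$-disks, with the choice dictated by the orientation conventions of Section \ref{subsec:RHP ntilde} and by the location of the disk relative to $\Omega_\pm$ given at the start of Section \ref{inversethsubsec}.

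The third ingredient is the Sobolev regularity $(b)$: one needs $v^\pm - I \in H^N_\pm(\Gamma)$, i.e. each factor is $H^N$ on each connected component of $\partial\Omega_\pm$ and satisfies the $(N-1)$th-order zero-sum (compatibility) condition at the self-intersection points $\Gamma_\star = \{i\kappa_j\}\cup\{0\}$. The $H^N$-membership along each smooth arc follows from Theorem \ref{directth}: $r_1\in C^\infty(\hat\Gamma_1)$ with rapid decay, $r_2\in C^\infty(\hat\Gamma_4\setminus\{\omega^2,-\omega^2\})$, $f\geq 0$ and $1+r_1r_2>0$ on the relevant arcs, and the exponentials $e^{\pm\theta_{ij}}$ are smooth and bounded on the compact arcs (on the rays $\Gamma_{1,\dots,6}$ the relevant $e^{-\theta}$ factor is the \emph{bounded} one by the choice of triangular order, and $r_1,r_2$ decay rapidly). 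Near the points $\pm\omega^2$ the poles of $r_2$ are cancelled in the combinations $1-r_2(k)r_2(1/k)$ etc. and in $\tilde r$, exactly as in \cite{CLmain}; and near $\pm 1$ Assumption \ref{originassumption} together with \eqref{r1r2at0} gives the needed finiteness. On the soliton circles the factors are entire in $k$ (rational with poles only outside the circle), hence $C^\infty$ there. The zero-sum condition at each point of $\Gamma_\star$ is the statement that the product of the jump matrices around that point is $I$ together with matching of derivatives; this is the cocycle/consistency property of the RH problem, already implicit in the fact that $\tilde n$ is single-valued, and for the \emph{factored} pieces it reduces, as in \cite[Lemma 4.1]{CLmain}, to a finite linear-algebra check that the triangular factors assembled cyclically around $i\kappa_j$ (and around $0$, which is only a branch point of $\lambda$ and where $r_1,r_2$ vanish to all orders by their definition and Theorem \ref{directth}$(ii)$) satisfy the compatibility relation — I would simply invoke the solitonless computation, since none of the newly added circles meets $\Gamma_\star$.

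I expect the main obstacle to be purely bookkeeping: verifying that the triangular order chosen on each ray $\Gamma_j$ (i.e. whether $v_j = (v^-_j)^{-1}v^+_j$ with $v^-_j$ lower or upper triangular) is globally consistent with the $\mathcal{A}$- and $\mathcal{B}$-propagation and with the orientation of $\Gamma$ chosen in this proof (where $\Gamma_2,\Gamma_4,\Gamma_6$ point toward $0$), so that the single relation $v^\pm(k) = \mathcal{B} v^\mp(k^{-1})\mathcal{B}$ in $(c)$ — note the swap of $+$ and $-$ — actually holds rather than its negation. This is exactly the kind of sign/orientation matching carried out in \cite{CLmain}; since Assumption \ref{solitonassumption} guarantees $\mathsf{Z}\cap\Gamma = \varnothing$, the soliton circles are disjoint from $\Gamma$ and from $\Gamma_\star$, so they contribute only extra connected components of $\partial\Omega_\pm$ on which everything is trivial, and no genuinely new analytic difficulty arises beyond what was already handled in the solitonless case.
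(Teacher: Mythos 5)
Your construction is essentially the paper's proof: the factorization on the pieces $\Gamma_1,\dots,\Gamma_9$ is taken over from the solitonless case \cite[Proof of Lemma 4.3]{CLmain} (including the zero-sum/compatibility checks at $\Gamma_\star$), and on the new circles one simply sets $v^+=v$, $v^-=I$ on $\bigcup_{j,k_0}\big(\omega^j\partial D_\epsilon(k_0)\cup\omega^j\partial D_\epsilon(k_0)^*\big)$ and $v^-=v^{-1}$, $v^+=I$ on the inverted circles, which is exactly your assignment and automatically yields $(b)$--$(d)$ there since the circle jumps are unipotent triangular and satisfy the $\mathcal{A}$-, $\mathcal{B}$-symmetries by construction. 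Your speculative remarks about the precise triangular orders and principal minors on the unit-circle arcs are details that both you and the paper legitimately defer to \cite{CLmain}, so they do not affect the verdict.
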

\begin{proof}
Let $v_{j}^{\pm}$ be the restriction of $v^{\pm}$ to $\Gamma_{j}$. The matrices $\{v_{j}^{\pm}\}_{j=1}^{9}$ can be chosen as in \cite[Proof of Lemma 4.3]{CLmain}. For $k \in \partial \mathcal{D}$, we define
\begin{align*}
& v^{+}(k)=v(k), \quad v^{-}(k) = I, & & \mbox{for } k \in \bigcup_{\substack{j=0,1,2 \\ k_{0}\in \mathsf{Z}}} \big(\omega^{j} \partial D_{\epsilon}(k_{0}) \cup \omega^{j} \partial D_{\epsilon}(k_{0})^{*}\big), \\
& v^{-}(k)=v(k)^{-1}, \quad v^{+}(k) = I, & & \mbox{for } k \in \bigcup_{\substack{j=0,1,2 \\ k_{0}\in \mathsf{Z}}} \big( \omega^{j} \partial D_{\epsilon}(k_{0})^{-1} \cup \omega^{j} \partial D_{\epsilon}(k_{0})^{-*} \big).
\end{align*}
The rest of the proof proceeds as in \cite[Proof of Lemma 4.3]{CLmain}.
\end{proof}

Since $\Gamma_{2},\Gamma_{4},\Gamma_{6}$ are now oriented towards $0$, Lemma \ref{vsymmlemma} implies that
\begin{align*}
(\overline{v(\bar{k})^{-1}})^T = R(k)^{-1}  v(k) R(k)
\end{align*}
on all of $\tilde{\Gamma} = \Gamma \cup \partial \mathcal{D}$.
Hence the rest of the proof of Theorem \ref{inverseth} follows in the same way as in \cite[Section 4.2]{CLmain}.

\appendix

\section{Pure soliton solutions}
In this appendix, we derive exact formulas for the one-solitons and the single breather solitons for (\ref{badboussinesq}) and study their regularity properties. Pure soliton solutions are constructed by solving RH problem \ref{RH problem for M} in the case when $r_{1}=0$, $r_{2}=0$, and $\alpha = \beta = \tilde{\alpha} = \tilde{\beta}=0$. In this case $v=I$ on $\Gamma$ and the poles of $M$ at $\{\kappa_{j}\}_{j=1}^{6}$ are removable. 

Let $\{\mathsf{Z}, \{c_{k_0}\}_{k_0 \in \mathsf{Z}}\}$ be scattering data satisfying properties $(\ref{Theorem2.3itemvi})$--$(\ref{Theorem2.3itemvii})$ of Theorem \ref{directth}, and define $\{d_{k_0}\}_{k_0 \in \mathsf{Z}}$ as in (\ref{dk0def}). We are led to consider the following RH problem. 

\begin{RHproblem}[Pure soliton RH problem]\label{RH problem for M sol}
Find $M(x,t,k)$ with the following properties:
\begin{enumerate}[(a)]
\item $M(x,t,\cdot) : \mathbb{C}\setminus \hat{\mathsf{Z}} \to \mathbb{C}^{3 \times 3}$ is analytic.

\item As $k \to \infty$, $M(x,t,k) = I + O(k^{-1})$. 

\item $M$ satisfies the symmetries $M(x,t, k) = \mathcal{A} M(x,t,\omega k)\mathcal{A}^{-1} = \mathcal{B} M(x,t,\tfrac{1}{k})\mathcal{B}$.

\item 
At each point of $\hat{\mathsf{Z}}$, two columns of $M$ are analytic while one column has (at most) a simple pole. Moreover, for each $k_{0}\in \mathsf{Z}\setminus \mathbb{R}$, 
\begin{align}
& \underset{k = k_0}{\res} [M(x,t,k)]_3 = c_{k_{0}} e^{-\theta_{31}(x,t,k_0)}[M(x,t,k_0)]_1, \nonumber \\
& \underset{k = \bar{k}_0}{\res} [M(x,t,k)]_2 = d_{k_0} e^{\theta_{32}(x,t,\bar{k}_0)} [M(x,t,\bar{k}_0)]_3, \label{Mk0notinR}
\end{align}
and, for each $k_{0} \in \mathsf{Z}\cap \mathbb{R}$,
\begin{align}\label{Mk0inR}
\underset{k = k_0}{\res} [M(x,t,k)]_2 = c_{k_{0}} e^{-\theta_{21}(x,t,k_0)}[M(x,t,k_0)]_1.
\end{align}

\end{enumerate}
\end{RHproblem}

Define
$$C_{k_{0}}(x,t) := c_{k_{0}} e^{-\theta_{31}(x,t,k_0)}, \qquad
 D_{k_{0}}(x,t) := d_{k_0} e^{\theta_{32}(x,t,\bar{k}_0)}, \qquad
 E_{k_{0}}(x,t) := c_{k_0} e^{-\theta_{21}(x,t,k_0)}.$$
Using the symmetries $\mathcal{A}$- and $\mathcal{B}$-symmetries together with (\ref{Mk0notinR}) and (\ref{Mk0inR}), we can write the residue conditions for $M$ at all of its poles as follows: For each $k_{0}\in \mathsf{Z}\setminus \mathbb{R}$, 
\begin{subequations}\label{Mresiduesk0 sol}
\begin{align}
& \underset{k = k_0}{\res} [M(k)]_{3} = C_{k_{0}} [M(k_0)]_{1}, & & \underset{k = \omega k_0}{\res} [M(k)]_{2} = \omega C_{k_{0}} [M(\omega k_0)]_{3}, \label{res M 1 ab sol}
	\\ 
& \underset{k = \omega^2 k_0}{\res} [M(k)]_{1} = \omega^2 C_{k_{0}} [M(\omega^2 k_0)]_{2}, & & \underset{k = k_0^{-1}}{\res} [M(k)]_{3} = - k_0^{-2} C_{k_{0}} [M(k_0^{-1})]_{2},	\label{res M 1 cd sol}
	\\
&  \underset{k = \omega^2k_0^{-1}}{\res}  [M(k)]_{1} = -\tfrac{\omega^2}{k_0^{2}} C_{k_{0}} [M(\omega^2 k_0^{-1})]_{3}, & & \underset{k = \omega k_0^{-1}}{\res}  [M(k)]_{2} = -\tfrac{\omega}{k_0^{2}} C_{k_{0}} [M(\omega k_0^{-1})]_{1}, \label{res M 1 ef sol} \\
& \underset{k = \bar{k}_0}{\res} [M(k)]_{2} = D_{k_{0}} [M(\bar{k}_0)]_{3}, & & \underset{k = \omega \bar{k}_0}{\res} [M(k)]_{1} = \omega D_{k_{0}} [M( \omega \bar{k}_0)]_{2}, \label{res M 2 ab sol}
	\\ 
& \underset{k = \omega^2 \bar{k}_0}{\res} [M(k)]_{3} = \omega^2 D_{k_{0}} [M(\omega^2 \bar{k}_0)]_{1}, & & \underset{k = \bar{k}_0^{-1}}{\res} [M(k)]_{1} = - \bar{k}_0^{-2} D_{k_{0}} [M(\bar{k}_0^{-1})]_{3},	 \label{res M 2 cd sol}
	\\
&  \underset{k = \omega^2 \bar{k}_0^{-1}}{\res}  [M(k)]_{2} = -\tfrac{\omega^2}{\bar{k}_0^{2}} D_{k_{0}} [M( \omega^2 \bar{k}_0^{-1})]_{1}, & & \underset{k = \omega \bar{k}_0^{-1}}{\res}  [M(k)]_{3} = -\tfrac{\omega}{\bar{k}_0^{2}} D_{k_{0}} [M(\omega \bar{k}_0^{-1})]_{2}, \label{res M 2 ef sol}
\end{align}
\end{subequations}
and, for each $k_{0}\in \mathsf{Z}\cap \mathbb{R}$, 
\begin{subequations}\label{Mresiduesk0real sol}
\begin{align}
& \underset{k = k_0}{\res} [M(k)]_{2} = E_{k_{0}} [M(k_0)]_{1}, & & \underset{k = \omega k_0}{\res} [M(k)]_{1} = \omega E_{k_{0}} [M(\omega k_0)]_{3}, \label{res M 3 ab sol}
	\\ 
& \underset{k = \omega^2 k_0}{\res} [M(k)]_{3} = \omega^2 E_{k_{0}} [M(\omega^2 k_0)]_{2}, & & \underset{k = k_0^{-1}}{\res} [M(k)]_{1} = - k_0^{-2} E_{k_{0}} [M(k_0^{-1})]_{2},	\label{res M 3 cd sol}
	\\
&  \underset{k = \omega^2k_0^{-1}}{\res} [M(k)]_{2} = -\tfrac{\omega^2}{k_0^{2}} E_{k_{0}} [M(\omega^2 k_0^{-1})]_{3}, & & \underset{k = \omega k_0^{-1}}{\res} [M(k)]_{3} = -\tfrac{\omega}{k_0^{2}} E_{k_{0}} [M(\omega k_0^{-1})]_{1}, \label{res M 3 ef sol}
\end{align}
\end{subequations}
where the $(x,t)$-dependence has been omitted for conciseness.

The following lemma establishes existence and uniqueness of a solution of RH problem \ref{RH problem for M sol}. This result is not only relevant for the construction of pure soliton solutions, but it also plays a role in the calculation of the long-time asymptotics for (\ref{badboussinesq}) in the presence of solitons \cite{CLmainSectorII}.

\begin{lemma}
For each $(x,t) \in \R \times [0,\infty)$, the solution $M$ of RH problem \ref{RH problem for M sol} exists and is unique.
\end{lemma}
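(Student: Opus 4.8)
The plan is to prove uniqueness first and then existence, using the by-now-standard Zhou-type argument based on the vanishing lemma. For uniqueness: since RH problem \ref{RH problem for M sol} is the $r_1 = r_2 = 0$, $\alpha = \beta = \tilde\alpha = \tilde\beta = 0$ specialization of RH problem \ref{RH problem for M} (with all jumps trivial and the $\kappa_j$ singularities removable), the associated homogeneous problem is exactly the pure-soliton case of the homogeneous version of RH problem \ref{RH problem for ntilde} after performing the $n \to \tilde n$ transformation of Section \ref{subsec:RHP ntilde} on the row-vector reduction $n = (1,1,1)M$. Thus Lemma \ref{vanishinglemma} (the vanishing lemma), which was proved under exactly the hypotheses that $\mathsf{Z} \subset D_{\mathrm{reg}} \cup (-1,0) \cup (1,\infty)$ and that the $c_{k_0}$ satisfy the positivity condition (\ref{ck0positivitycondition}), applies verbatim: any solution of the homogeneous row-vector problem vanishes identically. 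A short linear-algebra argument (as in the solitonless case, \cite[Section 4.2]{CLmain}) upgrades this to uniqueness of the $3\times 3$ matrix solution $M$: if $M_1, M_2$ are two solutions, then $M_1 M_2^{-1}$ has no jumps, the poles cancel because the residue conditions (\ref{Mresiduesk0 sol})--(\ref{Mresiduesk0real sol}) are the same, and it tends to $I$ at infinity, so by Liouville $M_1 = M_2$; alternatively, contract with $(1,1,1)$ and apply the vanishing lemma to the difference of rows.

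For existence, I would recast the problem as a singular integral equation and invoke Fredholm theory, exactly as in \cite[Section 4.2]{CLmain}. Concretely: introduce small circles $\partial\mathcal{D}$ around the poles and pass to RH problem \ref{RH problem for ntilde} (or its $3\times 3$ analogue with the matrices $Q_j, P_j$), which has jumps only (no poles). Then use Lemma \ref{vpmlemma} to factor the jump matrix $v = (v^-)^{-1} v^+$ with $w^\pm$ nilpotent and $v^\pm, (v^\pm)^{-1} \in I + H^N_\pm(\tilde\Gamma)$, which makes the associated Beals--Coifman operator $\mathcal{C}_w$ bounded on $L^2(\tilde\Gamma)$ and, crucially, compact as a perturbation — so that $I - \mathcal{C}_w$ is Fredholm of index zero. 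Injectivity of $I - \mathcal{C}_w$ is equivalent to the vanishing lemma (no nontrivial homogeneous solution), hence $I - \mathcal{C}_w$ is invertible and the RH problem is solvable. Undoing the $n \to \tilde n$ transformation and symmetrizing (using the $\mathcal{A}$- and $\mathcal{B}$-symmetries to build the $3\times 3$ solution from its first row, together with the $R$-symmetry of Lemma \ref{vsymmlemma}, valid here since $r_1 = r_2 = 0$ trivially satisfy the needed relations) produces the matrix $M$ solving RH problem \ref{RH problem for M sol}.

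The main obstacle — and the reason this lemma is stated separately rather than being a trivial corollary — is verifying that the vanishing lemma genuinely applies in the pure-soliton setting, i.e. that setting $r_1 = r_2 = 0$ does not destroy any of the structural ingredients used in the proof of Lemma \ref{vanishinglemma}. One has to check that the functions $\delta_2, \delta_3, F, G$ and the positivity of the jumps $G_+ - G_-$ on $\mathbb{R}$ and on $\lambda(\partial\mathcal{D})$ all survive: when $r_1 = r_2 = 0$ one has $\tilde r$ unchanged, $\delta_3 \equiv 1$ essentially (the integrand in (\ref{d3def}) involves only $\tilde r$, not $r_1$), the continuous-spectrum contributions (\ref{Gjump1})--(\ref{Gjump4}) simply drop out, and only the soliton contributions (\ref{lol8})--(\ref{lol9 bis bis}) remain — and these are $\geq 0$ precisely by (\ref{ck0positivitycondition}), which is part of the assumed scattering-data properties. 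So the vanishing argument goes through, in fact more easily than in the general case. The remaining work — Fredholm index-zero, symmetrization, reconstruction of the matrix from the row vector — is routine and identical to \cite[Section 4.2]{CLmain}, so I would simply cite it.

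\begin{proof}
Uniqueness. Suppose $M$ and $\widetilde M$ both solve RH problem \ref{RH problem for M sol} for a fixed $(x,t) \in \R \times [0,\infty)$. Writing $n := (1,1,1)M$ and $\tilde n := (1,1,1)\widetilde M$ and performing the $n \to \tilde n$ transformation of Section \ref{subsec:RHP ntilde} (with $r_1 = r_2 = 0$, so that $v = I$ on $\Gamma$), the difference $h := (1,1,1)(M - \widetilde M)$, after the same transformation near the points of $\hat{\mathsf{Z}}$, satisfies the homogeneous version of RH problem \ref{RH problem for ntilde}: it is analytic off $\partial\mathcal{D}$, has the jumps (\ref{vregdef}) there, obeys the symmetries (\ref{ntsymm}), and is $O(k^{-1})$ at infinity. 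Since the scattering data satisfy properties $(\ref{Theorem2.3itemvi})$--$(\ref{Theorem2.3itemvii})$ of Theorem \ref{directth}, Lemma \ref{vanishinglemma} applies and gives $h \equiv 0$. Applying the same argument to $(1,1,1)\mathcal{A}^{j}(M - \widetilde M)$ for $j = 0,1,2$ (which is admissible by the $\mathcal{A}$-symmetry) shows that every row of $M - \widetilde M$ vanishes, so $M = \widetilde M$.

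Existence. Introduce small mutually disjoint circles $\partial\mathcal{D}$ around the points of $\hat{\mathsf{Z}}$ as in (\ref{def of Dcal})--(\ref{def of partial Dcal}) and pass from $M$ to the equivalent RH problem whose jump contour is $\tilde\Gamma = \Gamma \cup \partial\mathcal{D} = \partial\mathcal{D}$ (since $v = I$ on $\Gamma$ here) and whose jump matrix on $\partial\mathcal{D}$ is given by the matrices $Q_1, Q_2, Q_5, Q_7, Q_{11}$ and $P_1, P_5, P_6$ of Section \ref{vanishinglemmasubsec}; this problem has no poles. By Lemma \ref{vpmlemma}, the jump admits a factorization $v = (v^-)^{-1} v^+$ with $v^\pm, (v^\pm)^{-1} \in I + H^N_\pm(\tilde\Gamma)$ and $w^\pm := \pm(v^\pm - I)$ nilpotent; hence the Beals--Coifman operator $\mathcal{C}_w\mu := \mathcal{C}_+(\mu w^-) + \mathcal{C}_-(\mu w^+)$ is bounded on $L^2(\tilde\Gamma)$, and $I - \mathcal{C}_w$ is Fredholm of index zero. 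The kernel of $I - \mathcal{C}_w$ corresponds to solutions of the homogeneous version of this RH problem, which by the uniqueness argument above (i.e. by Lemma \ref{vanishinglemma}) is trivial; therefore $I - \mathcal{C}_w$ is invertible and the RH problem is solvable. Undoing the transformation near $\hat{\mathsf{Z}}$ recovers a row-vector solution, and symmetrizing via the $\mathcal{A}$- and $\mathcal{B}$-symmetries together with the $R$-symmetry of Lemma \ref{vsymmlemma} (which for $r_1 = r_2 = 0$ holds trivially) yields a $3 \times 3$ matrix $M$ solving RH problem \ref{RH problem for M sol}. The details are identical to \cite[Section 4.2]{CLmain}.
\end{proof}
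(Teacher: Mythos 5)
Your uniqueness argument, as written in the proof body, has a genuine gap. You contract with $(1,1,1)$ and invoke Lemma \ref{vanishinglemma}, which only tells you that the \emph{sum of the rows} of $M-\widetilde M$ vanishes; to get the individual rows you then apply the same argument to $(1,1,1)\mathcal{A}^{j}(M-\widetilde M)$. But $\mathcal{A}$ is a permutation matrix and $(1,1,1)$ is a left eigenvector of it, so $(1,1,1)\mathcal{A}^{j}=(1,1,1)$ for every $j$: you never obtain new information, and the claim ``every row of $M-\widetilde M$ vanishes'' does not follow. Nor can you apply the vanishing lemma to a single row such as $e_1^{T}(M-\widetilde M)$, because individual rows do not satisfy the symmetry \eqref{ntsymm} that the lemma requires (only the $(1,1,1)$-contraction does). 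The paper proves uniqueness differently, along the lines of the alternative you mentioned in your plan but then abandoned: one shows that $M\widetilde M^{-1}$ has removable singularities at all points of $\hat{\mathsf{Z}}$ (a computation that uses $\det\widetilde M=1$, so that $\widetilde M^{-1}$ is the cofactor transpose and the residue conditions \eqref{Mresiduesk0 sol}--\eqref{Mresiduesk0real sol} cancel the poles), tends to $I$ at infinity, and hence equals $I$ by Liouville; alternatively one trades the poles for jumps on small circles with unit-determinant jump matrices and uses the standard determinant argument.

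Your existence argument also stops short at its crucial final step. Re-deriving the row-vector solution by Fredholm theory is fine (the paper simply quotes Theorem \ref{inverseth} with $r_1=r_2=0$, for which $T=\infty$), but a $1\times 3$ solution cannot be upgraded to a $3\times 3$ solution by ``symmetrizing'': the $\mathcal{A}$-, $\mathcal{B}$- and $R$-symmetries relate values of the row vector at $k$, $\omega k$, $1/k$, $\bar k$, and do not manufacture the missing rows at a fixed $k$. The paper instead defines the candidate matrix explicitly by the Wronskian-type formula \eqref{Mdef}, $M=P(k)^{-1}\bigl(n;\,n_x+n\mathcal{L};\,n_{xx}+2n_x\mathcal{L}+n\mathcal{L}^2\bigr)$, and then verifies the normalization, the symmetries, and—this is the part that is genuinely new relative to \cite{CLmain} and occupies most of the paper's proof—the residue conditions at every point of $\hat{\mathsf{Z}}$, using $\underset{k=k_0}{\res}\,n_3(k)=c_{k_0}e^{-\theta_{31}(k_0)}n_1(k_0)$ and its analogues. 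Deferring this to ``details identical to \cite[Section 4.2]{CLmain}'' is not accurate, since the solitonless reference contains no residue conditions to check.
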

\begin{proof}
Let $n$ be the solution of RH problem \ref{RHn} with $r_{1}=0$ and $r_{2}=0$. It follows from Theorem \ref{inverseth} that $n$ exists. Motivated by \cite[Lemma 4.15]{CLmain}, we define $M$ by
\begin{align}\label{Mdef}
M(x,t,k) := P(k)^{-1} \begin{pmatrix}
ne^{\mathcal{L}x + \mathcal{Z}t}  \\
(ne^{\mathcal{L}x + \mathcal{Z}t})_x \\
(ne^{\mathcal{L}x + \mathcal{Z}t})_{xx}
\end{pmatrix} e^{-(\mathcal{L}x+\mathcal{Z}t)}
= P(k)^{-1} \begin{pmatrix}
n  \\
n_x + n\mathcal{L} \\
n_{xx} + 2n_x\mathcal{L} + n\mathcal{L}^2
\end{pmatrix}.
\end{align}
Clearly, $M(x,t,\cdot) : \mathbb{C}\setminus \hat{\mathsf{Z}} \to \mathbb{C}^{3 \times 3}$ is analytic. The fact that $M(x,t,k)=I+O(k^{-1})$ as $k\to \infty$ can be proved as in \cite[Lemma 4.15]{CLmain}. Using the symmetries $n(k)=n(\omega k)\mathcal{A}^{-1}=n(k^{-1})\mathcal{B}$, $\mathcal{L}(k)=\mathcal{A}\mathcal{L}(\omega k)\mathcal{A}^{-1}=\mathcal{B}\mathcal{L}(k^{-1})\mathcal{B}$, $\mathcal{Z}(k)=\mathcal{A}\mathcal{Z}(\omega k)\mathcal{A}^{-1}=\mathcal{B}\mathcal{Z}(k^{-1})\mathcal{B}$, and $P(k)=P(\omega k)\mathcal{A}^{-1}=P(k^{-1})\mathcal{B}$, it is easy to check that $M$ verifies condition $(c)$ of RH problem \ref{RH problem for M sol}. Let $k_{0}\in \mathsf{Z}\setminus \R$. Since $n_{1}$ and $n_{2}$ are analytic at $k_{0}$ and $n_{3}$ has at most a simple pole at $k_{0}$, it follows that the first two columns of $M$ are analytic at $k_{0}$ and that the third column of $M$ has at most a simple pole. Moreover,
\begin{align*}
\underset{k = k_0}{\res} [M(k)]_{3} = P(k_{0})^{-1} \begin{pmatrix}
\underset{k = k_0}{\res}n_{3}(k)  \\
\partial_{x} \underset{k = k_0}{\res}n_{3}(k) + l_{3}(k_{0})\underset{k = k_0}{\res}n_{3}(k) \\
\partial_{xx}\underset{k = k_0}{\res}n_{3}(k) + 2 l_{3}(k_{0}) \partial_{x} \underset{k = k_0}{\res}n_{3}(k) + l_{3}(k_{0})^{2} \underset{k = k_0}{\res}n_{3}(k)
\end{pmatrix}.
\end{align*}
Since $\underset{k = k_0}{\res} n_3(k) = c_{k_{0}}e^{-\theta_{31}(x,t,k_0)} n_1(k_0)$, it follows that
\begin{align*}
\underset{k = k_0}{\res} [M(k)]_{3} & = c_{k_{0}}e^{-\theta_{31}(x,t,k_0)} P(k_{0})^{-1} \begin{pmatrix}
n_{1}(k_{0})  \\
\partial_{x} n_{1}(k_{0}) + l_{1}(k_{0}) n_{1}(k_{0}) \\
\partial_{xx} n_{1}(k_{0}) + 2 l_{1}(k_{0}) \partial_{x} n_{1}(k_{0}) + l_{1}(k_{0})^{2} n_{1}(k_{0})
\end{pmatrix} \\
& = c_{k_{0}}e^{-\theta_{31}(x,t,k_0)}[M(k_{0})]_{1}.
\end{align*}
The residues of $M$ at $k=\bar{k}_{0}$, $k_{0}\in \mathsf{Z}\setminus \R$, and at $k=k_{0}$, $k_{0}\in \mathsf{Z}\cap\R$, can be computed similarly, and we find that $M$ satisfies condition $(d)$ of RH problem \ref{RH problem for M sol}. Thus we have proved that $M$ exists.

The uniqueness of $M$ can be proved by noting that if $M$ and $\tilde{M}$ are two different solutions of RH problem \ref{RH problem for M sol}, then all the poles of $M \tilde{M}^{-1}$ at points in $\hat{\mathsf{Z}}$ are removable by long but straightforward calculations which use the fact that $\det \tilde{M} = 1$. Since $M \tilde{M}^{-1} \to 1$ as $k \to \infty$, it follows that $M \tilde{M}^{-1} = I$ for all $k$ by Liouville's theorem. An alternative proof transforms RH problem \ref{RH problem for M sol} into an equivalent RH problem in which the poles are replaced by jumps on small circles (this transformation is similar to the transformation described in Section \ref{subsec:RHP ntilde}).
The solution of the transformed RH problem (and hence also of RH problem \ref{RH problem for M sol}) is unique by standard arguments because the jump matrix has unit determinant.
\end{proof}

\subsection{One-soliton}\label{onesolitonsubsec}
Let us consider the case when $s_{11}$ has a single simple zero $k_0$ in $(-1,0) \cup (1, \infty)$. In this case, $\mathsf{Z} = \{k_0\}$ where $k_0 \in (-1,0)\cup(1,\infty)$, and the residue conditions \eqref{Mresiduesk0real sol} together with the normalization condition $M(x,t,k) = I + O(k^{-1})$ as $k \to \infty$ imply that
\begin{align*}
& [M(k)]_{1} = [I]_{1} + \frac{ \omega E_{k_{0}} [M(\omega k_0)]_{3}}{k - \omega k_0} + \frac{- k_0^{-2} E_{k_{0}} [M(k_0^{-1})]_{2}}{k - k_0^{-1}},
	\\
& [M(k)]_{2} = [I]_{2} + \frac{E_{k_{0}} [M(k_0)]_{1}}{k - k_0} + \frac{-\omega^2 k_0^{-2} E_{k_{0}} [M(\omega^2 k_0^{-1})]_{3}}{k - \omega^2k_0^{-1}},
	\\
& [M(k)]_{3} = [I]_{3} + \frac{\omega^2 E_{k_{0}} [M(\omega^2 k_0)]_{2}}{k -  \omega^2 k_0} + \frac{-\omega k_0^{-2} E_{k_{0}} [M(\omega k_0^{-1})]_{1}}{k - \omega k_0^{-1}}.
\end{align*}
We evaluate the first equation at $k = k_0$ and $k = \omega k_0^{-1}$, the second equation at $k = \omega^2 k_0$ and $k = k_0^{-1}$, and the third equation at $k = \omega k_0$ and $k = \omega^2 k_0^{-1}$. For each $j \in \{1,2,3\}$, this gives six algebraic equations for the six unknowns $M_{j1}(k_0)$, $M_{j1}(\omega k_0^{-1})$, $M_{j2}(\omega^2 k_0)$, $M_{j2}(k_0^{-1})$, $M_{j3}(\omega k_0)$, $M_{j3}(\omega^2 k_0^{-1})$. More precisely, with
\begin{align*}
A_{k_{0}}:= E_{k_{0}} \begin{pmatrix}
0 & 0 & 0 & \frac{-k_{0}^{-2}}{k_{0}-k_{0}^{-1}} & \frac{\omega}{k_{0}-\omega k_{0}} & 0 \\[0.2cm]
0 & 0 & 0 & \frac{-k_{0}^{-2}}{\omega k_{0}^{-1}-k_{0}^{-1}} & \frac{\omega}{\omega k_{0}^{-1}-\omega k_{0}} & 0 \\
\frac{1}{\omega^{2}k_{0}-k_{0}} & 0 & 0 & 0 & 0 & \frac{-\omega^{2}k_{0}^{-2}}{\omega^{2}k_{0}-\omega^{2}k_{0}^{-1}} \\
\frac{1}{k_{0}^{-1}-k_{0}} & 0 & 0 & 0 & 0 & \frac{-\omega^{2}k_{0}^{-2}}{k_{0}^{-1}-\omega^{2}k_{0}^{-1}} \\
0 & \frac{-\omega k_{0}^{-2}}{\omega k_{0}-\omega k_{0}^{-1}} & \frac{\omega^{2}}{\omega k_{0}-\omega^{2}k_{0}} & 0 & 0 & 0 \\[0.2cm]
0 & \frac{-\omega k_{0}^{-2}}{\omega^{2} k_{0}^{-1}-\omega k_{0}^{-1}} & \frac{\omega^{2}}{\omega^{2} k_{0}^{-1}-\omega^{2}k_{0}} & 0 & 0 & 0
\end{pmatrix},
\end{align*}
we have
\begin{multline*}
\begin{pmatrix}
[M(k_0)]_{1} & [M(\omega k_0^{-1})]_{1} & [M(\omega^2 k_0)]_{2} & [M(k_0^{-1})]_{2} & [M(\omega k_0)]_{3} & [M(\omega^2 k_0^{-1})]_{3} 
\end{pmatrix} \\
= \begin{pmatrix}
1 & 1 & 0 & 0 & 0 & 0 \\
0 & 0 & 1 & 1 & 0 & 0 \\
0 & 0 & 0 & 0 & 1 & 1
\end{pmatrix}(I-A_{k_{0}}^{T})^{-1}=: B_{k_{0}}.
\end{multline*}
Hence
\begin{align*}
& [M(k)]_{1} = [I]_{1} + \frac{ \omega E_{k_{0}} }{k - \omega k_0} [B_{k_{0}}]_{5} + \frac{- k_0^{-2} E_{k_{0}}}{k - k_0^{-1}}[B_{k_0}]_{4},
	\\
& [M(k)]_{2} = [I]_{2} + \frac{E_{k_{0}} }{k - k_0}[B_{k_0}]_{1} + \frac{-\omega^2 k_0^{-2} E_{k_{0}}}{k - \omega^2k_0^{-1}}[B_{k_0}]_{6},
	\\
& [M(k)]_{3} = [I]_{3} + \frac{\omega^2 E_{k_{0}} }{k -  \omega^2 k_0}[B_{k_0}]_{3} + \frac{-\omega k_0^{-2} E_{k_{0}} }{k - \omega k_0^{-1}}[B_{k_0}]_{2}.
\end{align*}
Since $n = (1,1,1)M$, we get
\begin{align*}
& n_1(k) = 1 + \frac{ \omega E_{k_{0}} }{k - \omega k_0}(1,1,1)[B_{k_0}]_{5} + \frac{- k_0^{-2} E_{k_{0}}}{k - k_0^{-1}}(1,1,1)[B_{k_0}]_{4},
	\\
& n_2(k) = 1 + \frac{E_{k_{0}} }{k - k_0}(1,1,1)[B_{k_0}]_{1} + \frac{-\omega^2 k_0^{-2} E_{k_{0}}}{k - \omega^2k_0^{-1}}(1,1,1)[B_{k_0}]_{6},
	\\
& n_3(k) = 1 + \frac{\omega^2 E_{k_{0}} }{k -  \omega^2 k_0}(1,1,1)[B_{k_0}]_{3} + \frac{-\omega k_0^{-2} E_{k_{0}} }{k - \omega k_0^{-1}}(1,1,1)[B_{k_0}]_{2},
\end{align*}
and the Boussinesq one-soliton is given by
$$u(x,t) = -i\sqrt{3}\frac{\partial}{\partial x}n_{3}^{(1)}(x,t),$$
where
$$n_{3}^{(1)}(x,t) := \lim_{k\to \infty} k (n_{3}(x,t,k) - 1) = \omega^2 E_{k_{0}} 
(1,1,1)[B_{k_0}]_{3} -\omega k_0^{-2} E_{k_{0}} (1,1,1)[B_{k_0}]_{2}.$$ 
A calculation gives
\begin{align}\label{uonezerodef}
u(x,t) = \frac{\frac{3}{2}(k_0 - k_0^{-1})^2}{\Big(f_{k_0} e^{-\frac{k_0^2 - 1}{4k_0}(x - ct)} + f_{k_0}^{-1} e^{\frac{k_0^2 - 1}{4k_0}(x - ct)}\Big)^2},
\end{align}
where
\begin{align}\label{cfk0def}
c := \frac{k_0 + k_0^{-1}}{2}, \qquad f_{k_0} := \sqrt{\frac{i \omega^2  (k_0^2 - \omega^2) c_{k_0}}{\sqrt{3}k_0(k_0^2 -1)}}.
\end{align}
It is not necessary to specify the branch of the square root in (\ref{cfk0def}) because the value of $u(x,t)$ does not depend on the choice of sign of $f_{k_0}$. 
If $f_{k_0} = 0$, then we interpret $u(x,t)$ in (\ref{uonezerodef}) as the zero solution $u(x,t) \equiv 0$.

\begin{lemma}\label{onesolitonsingularlemma}
Let $k_0\in (-1,0)\cup (1,\infty)$. The following are equivalent:
\begin{enumerate}[$(a)$]
\item The function $u(x,t)$ in (\ref{uonezerodef}) is smooth and real-valued for all $(x,t) \in \R \times [0,\infty)$.

\item $f_{k_0}^2 \geq 0$. 

\item $c_{k_0}$ obeys the positivity condition (\ref{ck0positivitycondition}), i.e., $i(\omega^{2}k_{0}^{2} - \omega)c_{k_{0}} \geq 0$.
\end{enumerate}
Moreover, $u(x,t)$ is singular if and only if $i(\omega^{2}k_{0}^{2} - \omega)c_{k_{0}} \in (-\infty,0)$.
\end{lemma}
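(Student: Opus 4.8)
The plan is to reduce everything to the single complex number $f_{k_0}^2$ from (\ref{cfk0def}) and then read off all statements from elementary properties of the real-variable function appearing in the denominator of (\ref{uonezerodef}). Throughout I would set $\mu := \tfrac{k_0^2-1}{4k_0}$ and $\xi := x - ct$, so that for each fixed $t$, $\xi$ runs over all of $\R$ as $x$ does, and note that $\tfrac{3}{2}(k_0-k_0^{-1})^2 > 0$ since $k_0 \neq \pm 1$.

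First I would record two preliminary facts. Since $\omega^3 = 1$ (hence $\omega^4 = \omega$), one has $\omega^2(k_0^2 - \omega^2) = \omega^2 k_0^2 - \omega$, so (\ref{cfk0def}) gives
\begin{align*}
f_{k_0}^2 = \frac{i(\omega^2 k_0^2 - \omega) c_{k_0}}{\sqrt{3}\, k_0(k_0^2 - 1)}.
\end{align*}
For $k_0 \in (-1,0) \cup (1,\infty)$ the number $k_0(k_0^2-1)$ is strictly positive (both factors have the same sign), and likewise $\mu$ is a nonzero real number. Hence $f_{k_0}^2$ is a positive real multiple of $i(\omega^2 k_0^2 - \omega) c_{k_0}$; in particular $f_{k_0}^2 \geq 0$ if and only if $i(\omega^2 k_0^2 - \omega) c_{k_0} \geq 0$, and $f_{k_0}^2 \in (-\infty,0)$ if and only if $i(\omega^2 k_0^2 - \omega) c_{k_0} \in (-\infty,0)$. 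This already proves the equivalence $(b) \Leftrightarrow (c)$ and reduces the last assertion of the lemma to the claim that $u$ is singular exactly when $f_{k_0}^2 \in (-\infty,0)$.

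Next I would analyze $g(\xi) := f_{k_0}^2 e^{-2\mu\xi} + 2 + f_{k_0}^{-2} e^{2\mu\xi}$, the square of the bracket in (\ref{uonezerodef}), so that $u(x,t) = \tfrac{3}{2}(k_0-k_0^{-1})^2 / g(\xi)$ (with $u \equiv 0$ when $f_{k_0} = 0$, by the stated convention). Since $e^{2\mu\xi}$ and $e^{-2\mu\xi}$ are linearly independent functions of $\xi \in \R$, $g$ is real-valued on $\R$ if and only if $f_{k_0}^2 \in \R$. I would then distinguish cases. If $f_{k_0}^2 > 0$, taking $f_{k_0} > 0$ (harmless, since $u$ does not depend on the sign of $f_{k_0}$), the bracket equals $2\cosh(\mu\xi - \ln f_{k_0}) \geq 2$, so $g \geq 4$ and $u$ is smooth and real-valued. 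If $f_{k_0}^2 < 0$, write $f_{k_0} = i a$ with $a := \sqrt{-f_{k_0}^2} > 0$; then the bracket is $i(a e^{-\mu\xi} - a^{-1} e^{\mu\xi})$, which vanishes at $\xi_\ast := \tfrac{1}{2\mu}\ln(a^2) \in \R$, so $g(\xi_\ast) = 0$ and $x \mapsto u(x,t)$ has a pole at $x = ct + \xi_\ast$ for every $t$, that is, $u$ is singular. Finally, if $f_{k_0}^2 \in \C \setminus \R$, then $g$ takes non-real values, so $u$ is not real-valued, while $g(\xi) = 0$ is equivalent to $e^{2\mu\xi} = -f_{k_0}^2$, which has no real solution, so $u$ has no pole. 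Combining these cases together with $f_{k_0} = 0$: property $(a)$ holds if and only if $f_{k_0}^2 \in [0,\infty)$, which is condition $(b)$; and $u$ is singular if and only if $f_{k_0}^2 \in (-\infty,0)$, which by the first step is the condition $i(\omega^2 k_0^2-\omega)c_{k_0} \in (-\infty,0)$.

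I do not anticipate a genuine obstacle: the core of the argument is the elementary fact that $a e^{-\mu\xi} + a^{-1} e^{\mu\xi}$ is bounded away from $0$ on $\R$ while $a e^{-\mu\xi} - a^{-1} e^{\mu\xi}$ has a unique real zero. The only points that need care are the algebraic simplification $\omega^2(k_0^2-\omega^2) = \omega^2 k_0^2 - \omega$ together with the positivity of $k_0(k_0^2-1)$ (and of $\mu$) on $(-1,0)\cup(1,\infty)$, and the bookkeeping of the two degenerate cases $f_{k_0} = 0$ and $f_{k_0}^2 \notin \R$, so that the three conditions are genuinely equivalent and the singular locus is precisely $(-\infty,0)$ rather than all of $\C \setminus [0,\infty)$.
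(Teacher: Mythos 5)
Your proposal is correct and follows essentially the same route as the paper: reduce $(b)\Leftrightarrow(c)$ to the identity $\omega^2(k_0^2-\omega^2)=\omega^2k_0^2-\omega$ together with $k_0(k_0^2-1)>0$ on $(-1,0)\cup(1,\infty)$, and then analyze the denominator $\big(z+z^{-1}\big)^2$ with $z=f_{k_0}e^{-\mu\xi}$, using that the exponential sweeps $(0,\infty)$. The only difference is organizational: you split into the cases $f_{k_0}^2>0$, $f_{k_0}^2<0$, $f_{k_0}^2\notin\R$, $f_{k_0}=0$, whereas the paper argues singularity via $z=\pm i$ and real-valuedness via the asymptotics as $x\to+\infty$, arriving at the same conclusion.
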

\begin{proof}
Using the short-hand notation $z := f_{k_{0}} e^{-\frac{k_0^2 - 1}{4k_0}(x - ct)}$, we can write (\ref{uonezerodef}) as
 $$u(x,t) = \frac{\frac{3}{2}(k_0 - k_0^{-1})^2}{(z + \frac{1}{z})^2}.$$
Hence $u(x,t)$ is non-singular if and only if $z \neq \pm i$ for all $x,t$. Since the exponential $e^{-\frac{k_0^2 - 1}{4k_0}(x - ct)}$ takes on any value in $(0, \infty)$ as $x$ and $t$ vary, we see that $u(x,t)$ is singular if and only if $f_{k_{0}} \in i\R \setminus \{0\}$.

The function $u$ is real-valued if and only if $z + z^{-1} \in \R \cup i\R$ for all $x, t$. As $x \to +\infty$, we have 
$$z + z^{-1} \sim f_{k_{0}}^{-1} e^{\frac{k_0^2 - 1}{4k_0}(x - ct)},$$
so if $u$ is real-valued, then $f_{k_{0}} \in \R \cup i\R$. On the other hand, if $f_{k_{0}} \in \R \cup i\R$, then $z + z^{-1} \in \R \cup i\R$. So $u$ is real-valued if and only if $f_{k_{0}} \in \R \cup i\R$. 

We conclude that $u(x,t)$ is both real-valued and smooth for all $x,t$ if and only if $f_{k_{0}} \in \R$, or, in other words, if and only if $f_{k_{0}}^2 \geq 0$. 
This proves that $(a)$ and $(b)$ are equivalent. 

The equivalence of $(b)$ and $(c)$ as well as the last claim follow from the fact that
$$\frac{i \omega^2 (k_0^2 -\omega^2)}{ \sqrt{3} k_0 (k_0^2 -1)} \frac{1}{i(\omega^2 k_0^2 -\omega)} = \frac{1}{\sqrt{3}k_{0}(k_{0}^{2}-1)} > 0$$
for all $k_0 \in (-1,0) \cup (1, \infty)$.
\end{proof} 

Lemma \ref{onesolitonsingularlemma} shows that the general (non-singular) one-soliton is given by (\ref{uonezerodef}) for some $f_{k_0}$ with $f_{k_0}^2 \geq 0$. 
Using $x_0 \in \R$ to parametrize the allowed values of $f_{k_0}$ according to 
$$f_{k_0} = \pm e^{\frac{k_0^2 -1}{4k_0} x_0},$$ 
this leads to the following well-known expression for the one-soliton solution of (\ref{badboussinesq}) (see for example \cite[Eq. (1.5)]{DH1999}):
\begin{align}\label{onesoliton}
u(x,t) = A \sech^2(\sqrt{A/6}(x - x_0 -ct)),
\end{align}
where $A := \frac{3}{8}(k_0 - k_0^{-1})^2 \geq 0$ (here the branch of $\sqrt{A}$ is defined such that $\sqrt{A} \geq 0$ and we have used that $\sqrt{A} = \sqrt{\frac{3}{8}}(k_0 - k_0^{-1})$ because $k_0 \in (-1,0) \cup (1, +\infty)$). 

If $k_0 \in (1, \infty)$, then $c > 1$, while if $k_0 \in (-1, 0)$, then $c < -1$.
Thus, as claimed in the introduction, the zeros of $s_{11}$ in $(1, \infty)$ give rise to right-moving solitons, while the zeros of $s_{11}$ in $(-1,0)$ give rise to left-moving solitons. It is worth noting that both right- and left-moving solitons travel with speeds greater than $1$.

\subsection{Breather}
Let us now consider the case when $s_{11}$ has a single simple zero $k_0$ in $D_2 \setminus \R$, i.e., let $\mathsf{Z} = \{k_0\}$ with $k_0 \in D_2 \setminus \R$.
It is possible to proceed as we did above in the case of the one-soliton; however, the resulting calculations become cumbersome because one needs to solve an algebraic system with $12$ equations. The following approach makes better use of the $\mathcal{A}$- and $\mathcal{B}$-symmetries and is more efficient.

The residue conditions \eqref{Mresiduesk0 sol} together with the normalization condition $M(x,t,k) = I + O(k^{-1})$ as $k \to \infty$ imply that
\begin{align}\nonumber
[M(k)]_{3} = &\;  [I]_{3} + \frac{C_{k_0} [M(k_0)]_{1}}{k - k_0} + \frac{- k_0^{-2} C_{k_0} [M(k_0^{-1})]_{2}}{k - k_0^{-1}} 
	\\ \label{M3fromresidues}
& + \frac{\omega^2 D_{k_0} [M(\omega^2 \bar{k}_0)]_{1}}{k - \omega^2 \bar{k}_0} + \frac{-\omega \bar{k}_0^{-2} D_{k_0} [M(\omega \bar{k}_0^{-1})]_{2}}{k - \omega \bar{k}_0^{-1}}. 
\end{align}
The $\mathcal{A}$- and $\mathcal{B}$-symmeties (\ref{nsymm}) imply that
$$n_3(k) = n_2(\omega k) = n_1(\omega^2 k) = n_3(k^{-1}).$$
Hence (\ref{M3fromresidues}) yields
$$n_3(k) = 1 + \frac{C_{k_0} n_3(\omega k_0)}{k - k_0} 
+ \frac{- k_0^{-2} C_{k_0} n_3(\omega k_0)}{k - k_0^{-1}} 
+ \frac{\omega^2 D_{k_0} n_3(\bar{k}_0)}{k - \omega^2 \bar{k}_0} + \frac{-\omega \bar{k}_0^{-2} D_{k_0} n_3(\bar{k}_0)}{k - \omega \bar{k}_0^{-1}}$$
and $n_{3}^{(1)}$ can be written as
$$n_{3}^{(1)}(x,t)  = (1 - k_0^{-2}) C_{k_0} n_3(\omega k_0) + (1 - \omega^2 \bar{k}_0^{-2} ) \omega^2 D_{k_0} n_3(\bar{k}_0).$$ 
Moreover, since
$$\frac{\frac{i(k_0^2-1)}{2 \sqrt{3} k_0^2}}{l_3(k) - l_3(k_0)} = \frac{1}{k - k_0} 
+ \frac{- k_0^{-2} }{k - k_0^{-1}},$$
we obtain
\begin{align}\label{n3C1D1}
n_3(k) = 1 + \frac{\frac{i(k_0^2-1)}{2 \sqrt{3} k_0^2}}{l_3(k) - l_3(k_0)} C_{k_0} n_3(\omega k_0)
+ \frac{\frac{i(\bar{k}_0^2-\omega^2)}{2 \sqrt{3} \bar{k}_0^2}}{l_3(k) - l_3(\omega^2 \bar{k}_0)} \omega^2 D_{k_0} n_3(\bar{k}_0).
\end{align}
Let $\tilde{c}_{k_0} := \frac{i(k_0^2-1)}{2 \sqrt{3} k_0^2} c_{k_0}$ and $\tilde{d}_{k_0} := \frac{i(\bar{k}_0^2-\omega^2)}{2 \sqrt{3} \bar{k}_0^2} \omega^2 d_{k_0}$.
Evaluating (\ref{n3C1D1}) at $k = \omega k_0$ and $k = \bar{k}_0$, we find the system
\begin{align*}
n_3(\omega k_0) = &\; 1 + \frac{\tilde{c}_{k_0} e^{x(l_1(k_0) - l_3(k_0)) + t(z_1(k_0) - z_3(k_0))}}{l_1(k_0) - l_3(k_0)} n_3(\omega k_0)
	\\
& + \frac{\tilde{d}_{k_0} e^{x(l_3(\bar{k}_0) - l_2(\bar{k}_0))+ t(z_3(\bar{k}_0) - z_2(\bar{k}_0))}}{l_1(k_0) - l_2(\bar{k}_0)} n_3(\bar{k}_0),
	\\
n_3(\bar{k}_0) =&\; 1 + \frac{\tilde{c}_{k_0} e^{x(l_1(k_0) - l_3(k_0)) + t(z_1(k_0) - z_3(k_0))}}{l_3(\bar{k}_0) - l_3(k_0)} n_3(\omega k_0)
	\\
& + \frac{\tilde{d}_{k_0} e^{x(l_3(\bar{k}_0) - l_2(\bar{k}_0)) + t(z_3(\bar{k}_0) - z_2(\bar{k}_0))}}{l_3(\bar{k}_0) - l_2(\bar{k}_0)} n_3(\bar{k}_0).
\end{align*}
Multiplying the first equation by $e^{x l_1(k_0) + t z_1(k_0)}$ and the second by $e^{x l_3(\bar{k}_0) + t z_3(\bar{k}_0)}$, we obtain
\begin{align*}
\begin{pmatrix}
e^{x l_1(k_0) + t z_1(k_0)}n_3(\omega k_0) \\ e^{x l_3(\bar{k}_0) + t z_3(\bar{k}_0)} n_3(\bar{k}_0) \end{pmatrix}
= (I - B)^{-1}\begin{pmatrix} e^{x l_1(k_0) + t z_1(k_0)} \\ e^{x l_3(\bar{k}_0) + t z_3(\bar{k}_0)} \end{pmatrix}
\end{align*}
provided that the matrix $I - B$ is invertible, where 
$$B = B(x,t) := \begin{pmatrix} 
\frac{\tilde{c}_{k_0} e^{x(l_1(k_0) - l_3(k_0))+t(z_1(k_0) - z_3(k_0))}}{l_1(k_0) - l_3(k_0)}  &  \frac{\tilde{d}_{k_0} e^{x(l_1(k_0) - l_2(\bar{k}_0)) + t(z_1(k_0) - z_2(\bar{k}_0))}}{l_1(k_0) - l_2(\bar{k}_0)} 	\\
\frac{\tilde{c}_{k_0} e^{x (l_3(\bar{k}_0)- l_3(k_0)) + t (z_3(\bar{k}_0)- z_3(k_0))}}{l_3(\bar{k}_0) - l_3(k_0)} & \frac{\tilde{d}_{k_0} e^{x(l_3(\bar{k}_0) - l_2(\bar{k}_0)) + t(z_3(\bar{k}_0) - z_2(\bar{k}_0))}}{l_3(\bar{k}_0) - l_2(\bar{k}_0)} 
\end{pmatrix}.
$$
Since, by (\ref{n3C1D1}),
\begin{align}\nonumber
n_3^{(1)}(x,t) 
= & -2i\sqrt{3}\big(\tilde{c}_{k_0} e^{x(l_1(k_0) - l_3(k_0)) + t(z_1(k_0) - z_3(k_0))} n_3(\omega k_0) 
	\\ \label{n31breather}
& \qquad\qquad + \tilde{d}_{k_0} e^{x(l_3(\bar{k}_0) - l_2(\bar{k}_0)) + t(z_3(\bar{k}_0) - z_2(\bar{k}_0))} n_3(\bar{k}_0)\big),
\end{align}
we find
\begin{align}\nonumber
n_3^{(1)}(x,t) & =-2i\sqrt{3}\begin{pmatrix} \tilde{c}_{k_0} e^{-x l_3(k_0) - t z_3(k_0)} &  \tilde{d}_{k_0} e^{-x l_2(\bar{k}_0) - t z_2(\bar{k}_0)} \end{pmatrix} (I - B)^{-1}  \begin{pmatrix} e^{x l_1(k_0) + t z_1(k_0)} \\ e^{x l_3(\bar{k}_0) + t z_3(\bar{k}_0)} \end{pmatrix}
	\\\nonumber
& =-2i\sqrt{3}\tr\bigg\{ (I - B)^{-1}  \begin{pmatrix} e^{x l_1(k_0)+t z_1(k_0)} \\ e^{x l_3(\bar{k}_0)+t z_3(\bar{k}_0)} \end{pmatrix} \begin{pmatrix} \tilde{c}_{k_0} e^{-x l_3(k_0)-t z_3(k_0)} &  \tilde{d}_{k_0} e^{-x l_2(\bar{k}_0)-t z_2(\bar{k}_0)} \end{pmatrix}\bigg\}
	\\ \label{n31trB}
& =-2i\sqrt{3}\tr\big\{ (I - B)^{-1}  B_x\big\},
\end{align}
so the corresponding solution of (\ref{badboussinesq}) is
\begin{align}\label{ubreather}
u(x,t) = -i\sqrt{3}\frac{\partial}{\partial x}n_{3}^{(1)}(x,t)
= -6\frac{\partial}{\partial x} \tr\big\{ (I - B)^{-1}  B_x\big\}.
\end{align}
We refer to the solution (\ref{ubreather}) as a breather. Note that $u(x,t)$ is well-defined by (\ref{ubreather}) whenever $\det(I - B(x,t)) \neq 0$. The next lemma shows that the breather (\ref{ubreather}) is well-defined for all $x\in \R$ and $t\geq 0$ if $k_0 \in D_{\mathrm{reg}}$. It also shows that if $k_0 \in D_{\mathrm{sing}}$, then for each fixed $t\geq 0$, there is an $x \in \R$ such that $u(x,t)$ is not well-defined.

\begin{lemma}\label{breatherlemma}
Let $k_0\in D_2 \setminus \R = D_{\mathrm{reg}} \sqcup D_{\mathrm{sing}}$. 
\begin{enumerate}[$(a)$]
\item If $k_0 \in D_{\mathrm{reg}}$, then $\det(I-B) > 0$ for all $(x,t) \in \R \times [0, \infty)$. In particular, the function $u(x,t)$ in (\ref{ubreather}) is a smooth and real-valued solution of (\ref{badboussinesq}) for all $(x,t) \in \R \times [0, \infty)$ if $k_0 \in D_{\mathrm{reg}}$.

\item If $k_0 \in D_{\mathrm{sing}}$ and $c_{k_0} \in \C \setminus \{0\}$, then for each fixed $t\geq 0$, the function $x \mapsto \det(I-B)$ has at least one zero on $\R$. 

\end{enumerate}
\end{lemma}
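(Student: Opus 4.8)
\textit{The plan.} The key is to uncover a hidden conjugation structure in the $2\times2$ matrix $B=B(x,t)$. First I would record the elementary identities
\[
l_3(\bar k_0)=-\overline{l_3(k_0)},\quad l_2(\bar k_0)=-\overline{l_1(k_0)},\quad z_3(\bar k_0)=-\overline{z_3(k_0)},\quad z_2(\bar k_0)=-\overline{z_1(k_0)},
\]
which are immediate from \eqref{lmexpressions intro} and $\bar\omega=\omega^2$, together with the relation $\tilde d_{k_0}=\overline{\tilde c_{k_0}}$, which is a direct consequence of \eqref{dk0def} and the definitions of $\tilde c_{k_0},\tilde d_{k_0}$. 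Setting $a:=l_1(k_0)-l_3(k_0)$, $\alpha:=z_1(k_0)-z_3(k_0)$, $b:=l_1(k_0)-l_2(\bar k_0)=2\re l_1(k_0)$ and $d:=l_3(k_0)-l_3(\bar k_0)=2\re l_3(k_0)$, these identities yield $B_{11}=\tfrac{\tilde c_{k_0}}{a}e^{ax+\alpha t}$, $B_{22}=\overline{B_{11}}$, and $B_{12}B_{21}$ real with $B_{12}B_{21}=-\tfrac{|a|^2}{bd}|B_{11}|^2$ (using $2\re l_1(k_0)-2\re l_3(k_0)=2\re a$ and the analogous identity for the $z_j$). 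Consequently
\[
\det(I-B)=|1-B_{11}|^2+\frac{|a|^2}{bd}\,|B_{11}|^2,
\]
which is real-valued. Writing $k_0=re^{i\phi}$ one computes $bd=\tfrac13(r-r^{-1})^2\sin\phi\,\sin(\phi+\tfrac{2\pi}{3})$, and a direct check against the definitions of $D_{\mathrm{reg}}$, $D_{\mathrm{sing}}$ and the angular extent of $D_2$ shows $bd>0$ on $D_{\mathrm{reg}}$ and $bd<0$ on $D_{\mathrm{sing}}$.

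For part $(a)$, if $k_0\in D_{\mathrm{reg}}$ then $bd>0$, so $\det(I-B)$ is the sum of the nonnegative term $|1-B_{11}|^2$ and a strictly positive multiple of $|B_{11}|^2$; these cannot vanish simultaneously (the first forces $B_{11}=1$, the second $B_{11}=0$), hence $\det(I-B)>0$ for all $(x,t)$. Then $I-B$ is invertible everywhere, so $u$ in \eqref{ubreather} is smooth, and since $u=6\,\partial_x^2\log\det(I-B)$ with $\det(I-B)$ a smooth strictly positive real function, $u$ is real-valued. That $u$ solves \eqref{badboussinesq} is part of the construction leading to RH problem \ref{RH problem for M sol} together with Theorem \ref{inverseth} and the derivation of \eqref{ubreather}.

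For part $(b)$, if $k_0\in D_{\mathrm{sing}}$ then $bd<0$; setting $\mu:=|a|/\sqrt{|bd|}$ we have $\det(I-B)=|1-B_{11}|^2-\mu^2|B_{11}|^2$, and the crux is that $\mu>1$. I would establish this via the identity $\tfrac{|a|^2}{|bd|}=1+\tfrac{(\re q-\re p)^2+(|\im p|-|\im q|)^2}{4|\im p\,\im q|}\ge 1$, where $p:=k_0+k_0^{-1}$ and $q:=\omega k_0+\omega^{-1}k_0^{-1}$ (so $|a|^2=|q-p|^2/12$, $|bd|=|\im p\,\im q|/3$, and $\im p\,\im q<0$ on $D_{\mathrm{sing}}$), combined with the fact that $\re p\ne\re q$ on $D_{\mathrm{sing}}$, which makes the inequality strict. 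Finally, fixing $t$ and using $c_{k_0}\ne0$ (hence $\tilde c_{k_0}\ne0$, $a\ne0$) together with $\re a=(b-d)/2\ne0$ on $D_{\mathrm{sing}}$, the map $x\mapsto|B_{11}(x,t)|=|\tilde c_{k_0}/a|\,e^{(\re a)x+(\re\alpha)t}$ is a continuous surjection $\R\to(0,\infty)$. Since $\det(I-B)\to1>0$ as $|B_{11}|\to0$ while $\det(I-B)\le(1+|B_{11}|)^2-\mu^2|B_{11}|^2\to-\infty$ as $|B_{11}|\to\infty$ (here $\mu>1$ is used), the intermediate value theorem yields a zero of $x\mapsto\det(I-B(x,t))$ for every fixed $t\ge0$.

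The main obstacle is bookkeeping rather than conceptual: both the structural identity for $\det(I-B)$ and the strict inequality $\mu>1$ on $D_{\mathrm{sing}}$ rely on a handful of elementary but slightly delicate trigonometric computations, the most error-prone being the precise identification of the angular extent of $D_2$, which is what lets one read off the signs of $bd$, of $\re a$, and of $\re q-\re p$ on $D_{\mathrm{reg}}$ and $D_{\mathrm{sing}}$. Once those signs are secured, everything else is immediate.
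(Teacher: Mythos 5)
Your proposal is correct and follows essentially the same route as the paper's proof: both reduce to the scalar identity $\det(I-B)=1-2\,\re B_{11}+h\,|B_{11}|^2$ (your coefficient $h=1+|a|^2/(bd)$ agrees with the paper's $h(k_0)$), establish the sign dichotomy between $D_{\mathrm{reg}}$ and $D_{\mathrm{sing}}$, and conclude $(a)$ from positivity of the resulting quadratic form and $(b)$ from the intermediate value theorem using that $|B_{11}(\cdot,t)|$ sweeps $(0,\infty)$ because $\re(l_1(k_0)-l_3(k_0))\neq 0$ and $c_{k_0}\neq 0$. The differences are only in bookkeeping: you bound the coefficient through the elementary identity in $p=k_0+k_0^{-1}$, $q=\omega k_0+\omega^{-1}k_0^{-1}$ (so part $(a)$ needs only $bd>0$ rather than the paper's sharper bound $h>4$, and your strict inequality $\mu>1$ plays the role of the paper's $h<-1/2$ obtained via the factorization $h=f(r)g(\alpha)$), and you get reality directly from $u=6\,\partial_x^2\log\det(I-B)$, which the paper also mentions as an alternative to invoking Theorem \ref{inverseth}.
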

\begin{proof}
Let
$$A :=  \begin{pmatrix} 
\frac{1}{l_1(k_0) - l_3(k_0)}  &  \frac{1}{l_1(k_0) - l_2(\bar{k}_0)} 	\\
\frac{1}{l_3(\bar{k}_0) - l_3(k_0)} & \frac{1}{l_3(\bar{k}_0) - l_2(\bar{k}_0)} 
  \end{pmatrix} 
 \begin{pmatrix} \tilde{c}_{k_0} e^{- \theta_{31}(x,t,k_0)} & 0 \\ 0 &  \tilde{d}_{k_0} e^{\theta_{32}(x,t,\bar{k}_0)} \end{pmatrix}.$$
Since
$$B = \begin{pmatrix} e^{xl_1(k_0) + t z_1(k_0)} & 0 \\ 0 & e^{xl_3(\bar{k}_0) + t z_3(\bar{k}_0)}  \end{pmatrix} A \begin{pmatrix} e^{xl_1(k_0) + t z_1(k_0)} & 0 \\ 0 & e^{xl_3(\bar{k}_0) + t z_3(\bar{k}_0)}  \end{pmatrix}^{-1},$$
we have $\det(I-B) = \det(I-A)$.
Using that $\bar{\tilde{c}}_{k_0} = \tilde{d}_{k_0}$ as a consequence of (\ref{dk0def}), we see that $A = A(x,t)$  has the form
$$A = \begin{pmatrix}
A_{11} & \frac{l_{3}(\bar{k}_{0})-l_{2}(\bar{k}_{0})}{l_{1}(k_{0})-l_{2}(\bar{k}_{0})} \bar{A}_{11} \\
\frac{l_{1}(k_{0})-l_{3}(k_{0})}{l_{3}(\bar{k}_{0})-l_{3}(k_{0})}A_{11} & \bar{A}_{11}
\end{pmatrix}.$$
Long but straightforward computations show that 
\begin{align}\label{detIminusA}
\det(I-A) = 1-2A_{11}^{R} + h(k_0) |A_{11}|^{2}, \quad h(k_0):=\frac{(k_{0}^{I}+\sqrt{3}k_{0}^{R})^{2}(1+|k_{0}|^{2}+|k_{0}|^{4})}{2k_{0}^{I}(\sqrt{3}k_{0}^{R}-k_{0}^{I})(|k_{0}|^{2}-1)^{2}},
\end{align}
where $(\cdot)^{R}:=\re (\cdot)$ and $(\cdot)^{I}:=\im (\cdot)$. 
Introducing polar coordinates $(r, \alpha)$ for $k_0$ via $k_0 = r e^{i\alpha}$, we can write
$$h(k_0) = f(r) g(\alpha),$$
where
$$f(r) := \frac{r^4+r^2+1}{ \left(r^2-1\right)^2}, \qquad g(\alpha) := \frac{(\sin{\alpha} + \sqrt{3} \cos{\alpha} )^2}{2\sin(\alpha ) (\sqrt{3} \cos{\alpha} -\sin{\alpha } )}.$$
The function $f(r)$ is a bijection from $(1, \infty)$ onto $(1, \infty)$. The function $g(\alpha)$ is a bijection from $(0,\pi/6)$ onto $(4, \infty)$, and a bijection from $(-\pi/6, 0)$ onto $(-\infty, -1/2)$. We deduce that $h(k_0) = f(r)g(\alpha)$ satisfies $h(k_0) > 4$ for all $k_0 \in D_{\mathrm{reg}}^R = \{r e^{i\alpha} \, | \, r > 1, \ \alpha \in (0, \pi/6)\}$ and $h(k_0) < -1/2$ for all $k_0 \in D_{\mathrm{sing}}^R = \{r e^{i\alpha} \, | \, r > 1, \ \alpha \in (-\pi/6, 0)\}$.
Using the symmetry $f(r)g(\alpha) = f(1/r)g(\alpha + \pi)$, we can extend these inequalities to $D_{\mathrm{reg}}^L$ and $D_{\mathrm{sing}}^L$, which gives
$$h(k_0) > 4 \quad \text{for $k_0 \in D_{\mathrm{reg}}$;} \qquad 
h(k_0) < - 1/2 \quad \text{for $k_0 \in D_{\mathrm{sing}}$}.$$

Suppose that $k_0 \in D_{\mathrm{reg}}$. Since $h(k_0) > 4$, (\ref{detIminusA}) yields
$$\det(I-A) \geq 1-2|A_{11}| + h(k_0) |A_{11}|^{2}
> 1-2|A_{11}| + 4 |A_{11}|^{2} = (1-2|A_{11}|)^{2} \geq 0,$$
showing that $\det(I-B) = \det(I-A) > 0$ for all $x,t$. By (\ref{n31trB}), $u(x,t)$ is a smooth function of $x,t$ as long as $\det(I-B) \neq 0$. The reality of $u(x,t)$ is a consequence of Theorem \ref{inverseth}, but can also be verified directly from (\ref{n31breather}) and (\ref{ubreather}) using the relations $\bar{\tilde{c}}_{k_0} = \tilde{d}_{k_0}$ and
\begin{align*}
& \begin{pmatrix} n_3(\omega k_0)  \\ n_3(\bar{k}_0) \end{pmatrix} = (I - A)^{-1} \begin{pmatrix} 1 \\ 1 \end{pmatrix}.
\end{align*}
This completes the proof of $(a)$.

Suppose now that $k_0 \in D_{\mathrm{sing}}$ and $c_{k_0} \in \C \setminus \{0\}$. 
In this case,
\begin{align*}
A_{11} = c_1 e^{(l_{1}(k_{0})-l_{3}(k_{0}))x+(z_{1}(k_{0})-z_{3}(k_{0}))t},
\end{align*}
where $c_1$ is a nonzero complex constant. Since $\re(l_{1}(k_{0})-l_{3}(k_{0})) \neq 0$, we see that for each fixed $t\geq 0$, $|A_{11}|$ takes on any value in $(0, \infty)$ as $x$ ranges over $\R$. 
Moreover, since $h(k_0) < -1/2$ is strictly negative, (\ref{detIminusA}) shows that $\det(I - A) \to 1$ as $|A_{11}| \to 0$ and that $\det(I - A) \to -\infty$ as $|A_{11}| \to \infty$. This shows that the function $x \mapsto \det(I-B) = \det(I-A)$ has at least one zero for any fixed $t \geq 0$.
\end{proof}

\subsection*{Acknowledgements}
Support is acknowledged from the Novo Nordisk Fonden Project, Grant 0064428, the European Research Council, Grant Agreement No. 682537, the Swedish Research Council, Grant No. 2015-05430, Grant No. 2021-04626, and Grant No. 2021-03877, and the Ruth and Nils-Erik Stenb\"ack Foundation.

\bibliographystyle{plain}
\bibliography{is}

\end{document}